\patchcmd\Gread@eps{\@inputcheck#1 }{\@inputcheck"#1"\relax}{}{}
\theoremstyle{plain}
\newtheorem{lemma}{Lemma}[section]
\newtheorem*{theorem*}{Theorem}
\newtheorem*{lemma*}{Lemma}
\newtheorem*{proposition*}{Proposition}
\newtheorem*{conjecture*}{Conjecture}
\newtheorem*{corollary*}{Corollary}
\newtheorem*{problem*}{Problem}
\newtheorem{theorem}[lemma]{Theorem}
\newtheorem{corollary}[lemma]{Corollary}
\newtheorem{proposition}[lemma]{Proposition}
\theoremstyle{definition}
\newtheorem{definition}[lemma]{Definition}
\newtheorem{remark}[lemma]{Remark}
\newcommand{\fto}[1]{\stackrel{#1}{\to}}
\newcommand{\CC}{\mathbb{C}}
\newcommand{\OO}{\mathcal{O}}
\newcommand{\te}{\otimes}
\newcommand{\onto}{\twoheadrightarrow}
\newcommand{\gr}{\mathrm{gr}}
\newcommand{\mumax}{\mu_{\max}}
\newcommand{\mumin}{\mu_{\min}}
\renewcommand{\P}{\mathbb{P}}
\newcommand{\PP}{\mathbb{P}}
\DeclareMathOperator{\ch}{ch}
\DeclareMathOperator{\Hom}{Hom}
\DeclareMathOperator{\Pic}{Pic}
\DeclareMathOperator{\rk}{rk}
\DeclareMathOperator{\coker}{coker}
\DeclareMathOperator{\Ext}{Ext}
\DeclareMathOperator{\ext}{ext}
\DeclareMathOperator{\ev}{ev}
\DeclareMathOperator{\sHom}{\mathcal{H}\kern -.5pt\mathit{om}}
\DeclareMathOperator{\sTor}{\mathcal{T}\kern -1.5pt\mathit{or}}
\DeclareMathOperator{\ecodim}{ecodim}
\DeclareMathOperator{\codim}{codim}
\newcommand{\leqor}{\underset{{\scriptscriptstyle (}-{\scriptscriptstyle )}}{<}}
\begin{document}

\date{\today}
\author[I. Coskun]{Izzet Coskun}
\address{Department of Mathematics, Statistics and CS \\University of Illinois at Chicago, Chicago, IL 60607}
\email{icoskun@uic.edu}

\author[J. Huizenga]{Jack Huizenga}
\address{Department of Mathematics, The Pennsylvania State University, University Park, PA 16802}
\email{huizenga@psu.edu}

\author[N. Raha]{Neelarnab Raha}
\address{Department of Mathematics, The Pennsylvania State University, University Park, PA 16802}
\email{neelraha@psu.edu}

\subjclass[2010]{Primary: 14J60, 14J26. Secondary: 14D20}
\keywords{Vector bundles, moduli spaces, Brill-Noether theory, projective plane}
\thanks{During the preparation of this article the first author was partially supported by the NSF grant DMS 2200684
and the second and third authors were partially supported by NSF FRG grant DMS 1664303.  The second author is also grateful for support from the Travel Support for Mathematicians program of the Simons Foundation}

%\title{Bundles with many sections on $\P^2$ and higher rank Brill-Noether theory}

\title{Brill-Noether theory on $\P^2$ for bundles with many sections}

\begin{abstract}
The Brill-Noether theory of curves plays a fundamental role in the theory of curves and their moduli and has been intensively studied since the 19th century. In contrast, Brill-Noether theory for higher dimensional varieties is less understood.  It is hard to determine when Brill-Noether loci are nonempty and these loci can be reducible and of larger than the expected dimension. 

Let $E$ be a semistable sheaf on $\PP^2$. In this paper, we give an upper bound  $\beta_{r, \mu}$ for $h^0(E)$ in terms of the rank $r$ and the slope $\mu$ of $E$. We show that the bound is achieved precisely when $E$ is a twist of a Steiner bundle.   We classify the  sheaves $E$ such that   
$h^0(E)$ is within $\lfloor \mu(E) \rfloor + 1$ of $\beta_{r, \mu}$. We determine the nonemptiness, irreducibility and dimension of  the Brill-Noether loci in the moduli spaces of sheaves on $\PP^2$ with $h^0(E)$ in this range. When they are nonempty, these Brill-Noether loci are irreducible  though almost always of larger than the expected dimension.
\end{abstract}
\maketitle

\setcounter{tocdepth}{1}
\tableofcontents

\section{Introduction}

Brill-Noether Theory has been the cornerstone of the study of curves and their moduli since the 19th century. A major triumph of modern algebraic geometry in the 1980s is a description of the cohomology jumping loci $$W_d^r := \{ L \in \Pic^d(C)| h^0(L) \geq r+1\}$$ for a general curve $C$ (see \cite{GriffithsHarris, FultonLazarsfeld, Gieseker3, EisenbudHarris}). Similarly, inspired by conjectures of Mukai \cite{Mukai}, Bertram and Feinberg \cite{BertramFeinberg}, the Brill-Noether loci in moduli spaces of  higher rank vector bundles on a general curve $C$ have been intensively studied (see \cite{GM, Newstead} for surveys). 

In contrast, the Brill-Noether loci for moduli spaces of sheaves on surfaces is less understood. Unlike in the case of curves, where the Euler characteristic determines the cohomology of the general stable sheaf of rank $r$ and degree $d$, on a surface the cohomology of the general stable sheaf can be hard to determine and has been the subject of intense study in recent years (see \cite{GottscheHirschowitz, CoskunHuizengaWBN, CoskunHuizengaBN, CoskunHuizengaNuer, CoskunNuerYoshioka, CoskunNuerYoshioka2, LevineZhang}). On  $\PP^2$, by a celebrated theorem of G\"{o}ttsche and Hirschowitz \cite{GottscheHirschowitz}, the general stable bundle has at most one nonzero cohomology group. Hence, the cohomology is determined by the Euler characteristic and the slope.  Therefore, it makes sense to consider the cohomology jumping loci.

\subsection{Bounding $h^0(E)$} Let $E$ be a semistable sheaf on $\PP^2$. If $\mu(E) <0$, then $E$ cannot have any global sections, so we may assume $\mu(E)\geq 0$. Our first main theorem gives a bound on $h^0(E)$. This theorem is an analogue of Clifford's Theorem for higher rank semistable sheaves on $\PP^2$.

\begin{theorem}[Corollary \ref{cor-sectionBound}]
Let $E$ be a semistable sheaf of rank $r$ and slope $\mu \geq 0$ on $\PP^2$. Let $\alpha =  \lfloor \mu \rfloor + 1$.  Then 
$$h^0(E) \leq  r \alpha \left(\mu- \frac{\alpha}{2} + \frac{3}{2}\right).$$
\end{theorem}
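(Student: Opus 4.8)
The plan is to bound $h^0(E)$ by restricting to a general line and iterating. Fix a general line $\ell\cong\PP^1$. Since $E$ is a semistable torsion-free sheaf, for general $\ell$ it is locally free in a neighborhood of $\ell$, so $E|_\ell$ is a vector bundle on $\ell$, and by the Grauert--M\"ulich restriction theorem its splitting type is balanced:
$$E|_\ell\cong\bigoplus_{i=1}^r\mathcal{O}_\ell(a_i),\qquad a_i\in\{\lfloor\mu\rfloor,\lceil\mu\rceil\},\qquad \sum_{i=1}^r a_i=r\mu.$$
The same line $\ell$ is generic for every twist, so $E(-k)|_\ell\cong\bigoplus_i\mathcal{O}_\ell(a_i-k)$ for all $k$.

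First I would set up the iterated hyperplane restriction. For each $k\geq 0$ the Tor-independence of $E$ with $\mathcal{O}_\ell$ (valid for general $\ell$) makes the sequence
$$0\to E(-k-1)\to E(-k)\to E(-k)|_\ell\to 0$$
exact, giving $h^0(E(-k))\leq h^0(E(-k)|_\ell)+h^0(E(-k-1))$. Because $\mu(E(-\alpha))=\mu-\alpha<0$ and $E(-\alpha)$ is semistable, it has no sections, so telescoping these inequalities from $k=0$ to $k=\alpha-1$ yields
$$h^0(E)\leq\sum_{k=0}^{\alpha-1}h^0\bigl(E(-k)|_\ell\bigr)=\sum_{i=1}^r\sum_{k=0}^{\alpha-1}\max(a_i-k+1,\,0).$$

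Finally I would carry out the summation. Writing $f(a):=\sum_{k=0}^{\alpha-1}\max(a-k+1,0)$, one checks directly that $f(\lfloor\mu\rfloor)=\binom{\alpha+1}{2}$ and that $f$ increases by exactly $\alpha$ as $a$ passes from $\lfloor\mu\rfloor$ to $\lceil\mu\rceil=\lfloor\mu\rfloor+1$. If $p$ of the $a_i$ equal $\lceil\mu\rceil$, then the constraint $\sum a_i=r\mu$ forces $p=r\mu-r\lfloor\mu\rfloor$, so that
$$\sum_{i=1}^r f(a_i)=r\binom{\alpha+1}{2}+\alpha\bigl(r\mu-r\lfloor\mu\rfloor\bigr)=r\alpha\Bigl(\mu-\frac{\alpha}{2}+\frac{3}{2}\Bigr),$$
using $\alpha=\lfloor\mu\rfloor+1$; this is precisely the claimed bound.

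The main obstacle is supplying the input that the splitting type on a general line is balanced. Since $f$ is convex, without this constraint the right-hand side could be made arbitrarily large by spreading out the $a_i$ while keeping $\sum a_i$ fixed; the semistability of $E$ therefore enters essentially through the Grauert--M\"ulich theorem, and everything else is routine hyperplane-restriction bookkeeping. Tracking the equality conditions---each restriction sequence must be surjective on $H^0$ and $E|_\ell$ must realize the balanced type with the maximal count of sections---should then recover the characterization of the extremal case by twists of Steiner bundles promised in the abstract.
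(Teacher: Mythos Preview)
Your argument is correct, but it follows a genuinely different route from the paper's. Both proofs iterate the line-restriction sequence and use $h^0(E(-\alpha))=0$ to terminate the telescoping, so the skeleton is the same. The difference is in how each controls $h^0(E(-k)|_\ell)$. You invoke Grauert--M\"ulich once to pin down the balanced splitting type of $E|_\ell$, and then every twist is computed explicitly; this is clean and gives the exact bound in one stroke. The paper instead proves a stronger statement (Theorem~\ref{thm-sectionBound}) valid for \emph{any} torsion-free sheaf in terms of $\mu_{\max}(E)$, by induction on $\alpha$ and $r$: in the generically globally generated case it uses only $h^0(E|_L)=r+c_1(E)$ and recurses on $E(-1)$, while in the other case it passes to the image of the evaluation map, which has smaller rank. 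The semistable corollary is then obtained by summing over Harder--Narasimhan factors. The trade-off is that your proof is shorter and more transparent for the semistable statement as announced in the introduction, but it imports Grauert--M\"ulich as a black box and does not extend to the mildly unstable sheaves (those with $\alpha-2<\mu_{\min}\leq\mu_{\max}<\alpha$) that the paper needs later when analyzing images and kernels of evaluation maps in \S\ref{sec-steiner}--\ref{sec-alpha}. Your closing remark about reading off the extremal case from the equality conditions is correct in spirit, though the paper's classification again requires the more general bound to handle the Harder--Narasimhan pieces.
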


\begin{remark}
More generally, Theorem \ref{thm-sectionBound} and Corollary \ref{cor-sectionBound} will give bounds on $h^0(E)$ when $E$ is a torsion-free coherent sheaf whose Harder-Narasimhan factors satisfy appropriate bounds.
\end{remark}

\begin{remark}
For a stable sheaf $E$ with $\mu(E)>0$ on $\PP^2$, Gould, Lee and Liu \cite{GouldLiu} give the upper bound 
$$h^0(E) \leq \max\left(\frac{1}{2}c_1(E)^2 + \frac{3}{2} c_1(E)+1, r \right).$$ Their upper bound is sharp when it is computed by $r$. Otherwise, the leading term of our upper bound is stronger by a factor of $r$. We will shortly see large intervals of $\mu$ where our upper bound is attained by stable sheaves.   Recently, Costa, Mac\'ias Tarr\'{i}o and  Roa-Leguizam\'{o}n \cite{CMRL} gave a bound on $h^0$ of semistable sheaves on any surface with $K_X \cdot H \leq 0$. Our bounds also improve theirs in the case $X= \PP^2$.
\end{remark}

 In light of the theorem, we define $$\alpha= \alpha_{\mu} := \lfloor \mu \rfloor + 1 \quad \mbox{and} \quad \beta= \beta_{r, \mu} : = r \alpha \left(\mu -  \frac{\alpha}{2} + \frac{3}{2} \right),$$ where $\beta_{r, \mu}$ is the maximum possible $h^0(E)$ for a semistable sheaf $E$ of rank $r$ and slope $\mu$.

 \begin{definition}
 We say that a semistable sheaf $E$ of rank $r$ and slope $\mu\geq 0$ has {\em deficiency} $\delta$ if $$h^0(E) =\beta_{r, \mu} - \delta.$$
 \end{definition}
 
 Let $M(r, \mu, \chi)$ denote the moduli space of Gieseker semistable sheaves on $\PP^2$ of rank $r$, slope $\mu$ and Euler characteristic $\chi$.
 Note that Dr\'ezet and Le Potier have completely classified when the moduli spaces $M(r,\mu,\chi)$ are nonempty \cite{DrezetLePotier}.  Let $B^k(r, \mu, \chi) \subset M(r, \mu, \chi)$ denote the Brill-Noether locus defined as the closure of the locus of stable sheaves that have at least $k$ linearly independent sections. The main goal of the rest of the paper is to classify sheaves with small deficiency and study the corresponding Brill-Noether loci.
 
\subsection{Sheaves with $\delta=0$} Let $E$ be a semistable sheaf of rank $r$, slope $\mu\geq 0$, and deficiency $\delta=0$. Then $E$ has the maximum possible $h^0$ with the given rank and slope. We will see that then $E$ is a \emph{twisted Steiner bundle} of the form$$0 \to \OO_{\PP^2}(\alpha-2)^a \fto{M} \OO_{\PP^2}(\alpha-1)^{a+r} \to E \to 0$$ for some integer $a\leq r$.  Any such sheaf has $h^1(E) = 0$ and is globally generated.  These sheaves were  studied by Dr\'ezet on $\PP^2$ \cite{Drezet} and  generalized to any $\PP^n$ by Dolgachev and Kapranov \cite{DolgachevKapranov}. Their stability and properties have been studied extensively (see \cite{Brambilla, Brambilla2, CoskunHuizengaSmith, Huizenga}).  When these sheaves are semistable, they constitute entire moduli spaces of semistable sheaves.  Therefore, Brill-Noether loci $B^\beta (r,\mu,\chi)$ are either empty or the entire moduli space, and it is easy to tell which possibility holds.

\subsection{Sheaves with   deficiency less than $ \alpha$}

When the deficiency $\delta $ is less than  $\alpha$, we will not describe the sheaves quite so explicitly.  However, the following result generalizes one key aspect of the deficiency $0$  case. 

\begin{theorem}[Theorem \ref{thm-h1}]\label{thm-Introh1}
Let $E$ be a semistable sheaf on $\PP^2$ of rank $r$, slope $\mu \geq 0$ and deficiency $\delta$. 
If $\delta < \alpha$, then $H^1(E)=0$ and $E$ is globally generated.
\end{theorem}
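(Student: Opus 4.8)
The plan is to reduce both conclusions to the single vanishing $H^1(E(-1))=0$ by invoking Castelnuovo--Mumford regularity. First I would record that $H^2(E(-2))=0$: by Serre duality this group is dual to $\Hom(E,\OO_{\PP^2}(-1))$, and a nonzero map from the $\mu$-semistable sheaf $E$ (all of whose quotients have slope $\geq\mu\geq0$) to $\OO_{\PP^2}(-1)$ is impossible, since the image would be a quotient of $E$ of slope $\geq0$ sitting inside a sheaf of slope $-1$. Consequently, once $H^1(E(-1))=0$ is established, $E$ is $0$-regular, and regularity at once gives that $E$ is globally generated and that $H^1(E)=H^2(E)=0$. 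Thus the whole theorem collapses to proving $H^1(E(-1))=0$, and I would set this up as an induction on $\alpha=\lfloor\mu\rfloor+1$: the twist $E(-1)$ is semistable of slope $\mu-1$ with $\alpha_{\mu-1}=\alpha-1$, so if $E(-1)$ falls under the theorem then $H^1(E(-1))=0$ is literally one of its conclusions. The base case $\alpha=1$ forces $\delta=0$, so $E$ attains the Steiner bound and is a twisted Steiner bundle, which is $0$-regular.

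The second step is the general-line machinery that both powers the induction and measures the deficiency. For a general line $L$ the restriction $E|_L$ is balanced (Grauert--M\"ulich), so $E(k)|_L$ has vanishing $H^1$ for all $k\geq-\alpha$. Feeding this into the restriction sequences $0\to E(k-1)\to E(k)\to E(k)|_L\to0$ produces surjections $H^1(E(-j-1))\twoheadrightarrow H^1(E(-j))$ for $0\leq j\leq\alpha$, together with the decomposition
$$\delta=\sum_{j=0}^{\alpha-1}\delta_j,\qquad \delta_j:=\dim\ker\bigl(\ell\colon H^1(E(-j-1))\to H^1(E(-j))\bigr)\geq0,$$
where $\ell$ cuts out $L$; here one uses that the balanced computation of $\sum_j h^0(E(-j)|_L)$ recovers exactly $\beta_{r,\mu}$, which is how the bound $h^0(E)\leq\beta_{r,\mu}$ is obtained in the first place. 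The sequence at $j=0$ gives $\delta(E(-1))=\delta-\delta_0$, so the inductive hypothesis applies to $E(-1)$ whenever $\delta-\delta_0<\alpha-1$; in particular it applies whenever $\delta\leq\alpha-2$, and the only configuration the induction does not immediately dispose of is $\delta=\alpha-1$ with $\delta_0=0$.

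The heart of the matter is therefore the contrapositive claim: \emph{if $H^1(E(-1))\neq0$ then $\delta\geq\alpha$}. Granting it, the boundary configuration is excluded and the theorem follows. To prove the claim, note that the surjections above force $H^1(E(-j))\neq0$ for all $1\leq j\leq\alpha$ as soon as $H^1(E(-1))\neq0$, so it suffices to show that every one of the $\alpha$ terms satisfies $\delta_j\geq1$; summing then yields $\delta\geq\alpha$. The term $\delta_0$ is free when $H^1(E)=0$, since then $\delta_0=\dim H^1(E(-1))\geq1$; otherwise, and for $1\leq j\leq\alpha-1$, the inequality $\delta_j\geq1$ is exactly the assertion that multiplication by the general linear form $\ell$ is \emph{non-injective} on $H^1(E(-j-1))$. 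Equivalently, the finite-length graded module $\bigoplus_k H^1(E(k))$ must strictly drop in dimension at every step of the window $[-\alpha,0]$.

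This last point is where I expect the main obstacle to lie, and where semistability must genuinely enter. For a general finite-length module these strict drops can fail: multiplication by a general linear form can be injective in low degrees (as for $k[x,y,z]/(y,z,x^m)$, on which a general $\ell$ acts injectively below the top degree), so the claim is simply false without extra hypotheses on the module. The plan is to exploit the $\mu$-semistability of $E$ to exclude such ``thin'' intermediate-cohomology modules, for instance by combining the balanced restriction on $L$ with a lower bound on $h^0(E(-j))$ that limits how slowly $h^1$ of the twists can decay, or by reducing to the already-established structure of sheaves attaining $h^0=\beta_{r,\mu}$ through an elementary modification that raises the number of sections to the maximum and then transfers the vanishing back. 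Making one of these arguments precise---showing that a semistable $E$ with $H^1(E(-1))\neq0$ must forfeit at least $\alpha$ sections relative to the Steiner bound---is the crux of the proof.
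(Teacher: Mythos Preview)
Your Castelnuovo--Mumford reduction and the decomposition $\delta=\sum_{j=0}^{\alpha-1}\delta_j$ are correct as bookkeeping devices, and you have accurately isolated the boundary configuration $\delta=\alpha-1$, $\delta_0=0$ as the one the naive induction does not cover. But two genuine gaps remain.

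First, the base case is circular. The assertion that a semistable sheaf with $\delta=0$ is a twisted Steiner bundle is Corollary~\ref{cor-maxSteiner}, and its proof opens by invoking Theorem~\ref{thm-h1} to obtain $h^1(E)=0$. You therefore cannot cite the Steiner classification here; the base case $\alpha=1$ must be handled by the same mechanism as the inductive step.

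Second, you correctly identify the crux---that $H^1(E(-1))\neq 0$ should force every $\delta_j\geq 1$---but do not prove it, and your own thin-module example shows why a purely module-theoretic argument on $\bigoplus_k H^1(E(k))$ cannot work without further input. The paper supplies that input by a completely different route. One first proves, by an elementary section count using only semistability (Lemma~\ref{lem-ggcodim1}), that $\delta<\alpha$ forces $E$ to be globally generated in codimension~$1$. The payoff is that $H^1(E|_L)=0$ now holds for \emph{every} line $L$, not just a general one; Grauert--M\"ulich alone would not give this. Writing $a=h^1(E)$ and $\kappa=h^1(E(-1))-a$, your own induction on $E(-1)$ already rules out $\kappa\geq 2$, so $\kappa\in\{0,1\}$. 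Since the surjections $H^1(E(-1))\to H^1(E)$ exist for all $L$, they globalize to a surjection of vector bundles on the dual plane
\[
\OO_{\P^{2*}}(-1)^{a+\kappa}\twoheadrightarrow \OO_{\P^{2*}}^{a},
\]
and for $\kappa\in\{0,1\}$ an Euler-characteristic computation on the kernel forces $a=0$. This $\P^{2*}$-argument handles the base case $\alpha=1$ and the inductive step uniformly, and is exactly the missing global ingredient your single-general-line analysis cannot reproduce. Global generation then follows from the Beilinson spectral sequence (Corollary~\ref{cor-ggbundle}), and a separate double-dual argument (Proposition~\ref{prop-h1sheaf}) passes from bundles to arbitrary sheaves.
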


Hence the Brill-Noether loci $B^{\beta-\delta}(r,\mu,\chi)$ are either empty or the entire moduli space, and we can determine which possibility holds.    More precisely, we have the following.

\begin{corollary}\label{cor-introBNdla}  Suppose $\delta < \alpha$, and let $M(r,\mu,\chi)$ be a nonempty moduli space of sheaves on $\P^2$. The Brill-Noether locus $B^{\beta- \delta}(r,\mu,\chi)$ is empty unless $\chi \geq \beta - \delta$. Furthermore, every  $E \in M(r, \mu, \beta- \delta)$ has $h^0(E) = \beta - \delta$. Hence, $B^{\beta - \delta}(r, \mu, \beta- \delta) = M(r, \mu, \beta- \delta)$ and $B^{\beta - \delta+1}(r, \mu, \beta- \delta)= \emptyset$.
 \end{corollary}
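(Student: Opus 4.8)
The plan is to reduce the entire corollary to two facts already established --- the section bound $h^0(E)\le \beta_{r,\mu}=\beta$ of Corollary \ref{cor-sectionBound}, and the vanishing $H^1(E)=0$ of Theorem \ref{thm-Introh1} for sheaves of deficiency $<\alpha$ --- together with one elementary input that I would prove first: for every semistable sheaf $E$ of slope $\mu\ge 0$ on $\P^2$ one has $H^2(E)=0$. Indeed, by Serre duality $H^2(E)\cong \Hom(E,\OO_{\PP^2}(-3))^\vee$, and any nonzero homomorphism $E\to \OO_{\PP^2}(-3)$ would have image a rank-one subsheaf of $\OO_{\PP^2}(-3)$, hence of slope $\le -3<0$; but that image is also a quotient of $E$, so $\mu$-semistability of $E$ (which follows from Gieseker semistability) forces its slope to be $\ge \mu\ge 0$, a contradiction. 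Since all the sheaves in sight are Gieseker semistable of nonnegative slope, hence torsion-free and $\mu$-semistable, this vanishing applies to them uniformly.

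For the emptiness statement I would argue by contrapositive. If $B^{\beta-\delta}(r,\mu,\chi)\ne\emptyset$, then by the definition of the Brill--Noether locus there is a stable sheaf $E$ of these invariants with $h^0(E)\ge \beta-\delta$; its deficiency is then at most $\delta<\alpha$, so Theorem \ref{thm-Introh1} gives $H^1(E)=0$, while the previous step gives $H^2(E)=0$. Hence $\chi=h^0(E)\ge\beta-\delta$, which is exactly the asserted necessary condition.

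For the second statement, let $E\in M(r,\mu,\beta-\delta)$ be arbitrary, so $\chi(E)=\beta-\delta$. The key is a short bootstrap: from $H^2(E)=0$ we get $h^0(E)=\chi(E)+h^1(E)=\beta-\delta+h^1(E)\ge\beta-\delta$, so the deficiency of $E$ is at most $\delta<\alpha$; now Theorem \ref{thm-Introh1} applies and yields $h^1(E)=0$, whence $h^0(E)=\beta-\delta$ exactly. The two displayed consequences are then immediate: every stable sheaf in $M(r,\mu,\beta-\delta)$ has exactly $\beta-\delta$ sections, in particular at least $\beta-\delta$, so the locus defining $B^{\beta-\delta}$ is the entire stable locus and its closure is all of $M(r,\mu,\beta-\delta)$; and no such sheaf has $\beta-\delta+1$ sections, so $B^{\beta-\delta+1}(r,\mu,\beta-\delta)=\emptyset$.

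I do not expect a serious obstacle once Theorem \ref{thm-Introh1} is in hand; the only point requiring genuine care is the order of the logical steps in the third paragraph. One cannot invoke Theorem \ref{thm-Introh1} until one knows the deficiency is $<\alpha$, so the vanishing $H^2(E)=0$ must be used first to extract the free lower bound $h^0(E)\ge\chi=\beta-\delta$, after which the theorem can be applied to remove $h^1$. I would also verify that the points of the Brill--Noether loci and of $M(r,\mu,\beta-\delta)$ are genuinely torsion-free semistable sheaves of slope $\mu\ge 0$, so that the $H^2$-vanishing and the hypotheses of Corollary \ref{cor-sectionBound} and Theorem \ref{thm-Introh1} all apply.
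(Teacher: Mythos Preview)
Your proof is correct and is precisely the argument the paper has in mind; the paper states both Corollary \ref{cor-introBNdla} and its restatement Corollary \ref{cor-BNsmall} as immediate consequences of Theorem \ref{thm-h1} without supplying a proof, and your derivation---$H^2$ vanishing from semistability and Serre duality, then the bootstrap $h^0(E)\geq\chi$ to force small deficiency, then Theorem \ref{thm-Introh1} to kill $h^1$---is exactly how one fills in the details. The one small point you correctly flag, namely that the order of steps matters (use $H^2=0$ first to get the deficiency bound, only then invoke the theorem), is the only place one could go wrong, and you handle it properly.
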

 
  The proof of Theorem \ref{thm-Introh1} occupies the first half of the paper.

\subsection{Sheaves with deficiency $\alpha$}

In the second half of the paper, we give an in depth study of the Brill-Noether loci $B^{\beta-\alpha}(r,\mu,\chi)$.  In light of Corollary \ref{cor-introBNdla},  these  are the first interesting Brill-Noether loci which arise when we decrease the number of sections from $\beta$.  We study the basic questions of nonemptiness, irreducibility, and dimension for these loci, and the answers are quite delicate.

First, we need a generalization of Theorem \ref{thm-Introh1} to the case of $\delta = \alpha$.

\begin{theorem}[Theorem \ref{thm-defAlpha}]
Let $E$ be a $\mu$-semistable sheaf on $\PP^2$ of rank $r\geq 2$ and slope $\mu$ having $h^0(E) = \beta-\alpha$.    Then either\begin{enumerate}
\item $H^1(E) = 0$ and $E$ is globally generated, or 

\item There is a twisted Steiner bundle $S$ of the form $$0\to \OO(\alpha-2)^{a-1}\to \OO(\alpha-1)^{a+r-1}\to S\to 0$$ such that $E$ fits as an extension $$0\to S\to E\to \OO_L(-b)\to 0$$ for some line $L$, where $b=1+h^1(E)$.  In this case, $E$ fails to be globally generated in codimension 1.
 \end{enumerate}
\end{theorem}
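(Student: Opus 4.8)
The plan is to split on whether $E$ is globally generated: if it is, I aim for conclusion (1) by proving $H^1(E)=0$; if it is not, I aim for conclusion (2) by producing the line $L$, the quotient $\OO_L(-b)$, and showing the kernel is a twisted Steiner bundle. First some preliminaries common to both branches. Since $E$ is $\mu$-semistable with $\mu\geq 0$, Serre duality gives $H^2(E)=\Hom(E,\OO(-3))^\vee$, which vanishes because a nonzero such map would violate semistability; hence $H^2(E)=0$ and $h^1(E)=h^0(E)-\chi(E)=(\beta-\alpha)-\chi(E)$ is pinned down by the numerical invariants, so $b:=1+h^1(E)\geq 1$ is a well-defined integer. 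I also record that the kernel $S$ in (2) has rank $r$, $c_1(S)=c_1(E)-1$, whence $\mu(S)=\mu-\tfrac1r$, $\alpha_{\mu(S)}=\alpha$, and a direct computation gives $\beta_{r,\mu(S)}=\beta-\alpha$; this numerical match is what makes the modification argument work.

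The heart of the argument is the non-globally-generated case, which I attack by restriction to lines. By a Grauert--M\"ulich type restriction theorem, $E|_L$ is balanced for a general line $L$, so $E|_L\cong \OO_L(\alpha)^a\oplus\OO_L(\alpha-1)^{r-a}$ with all summands globally generated; hence $E$ is globally generated along general lines and the failure of global generation is supported on special lines. The key step is to show that, because $h^0(E)=\beta-\alpha$ is exactly one ``$\alpha$-block'' below the maximum $\beta$ of Corollary \ref{cor-sectionBound}, there is a single special line $L$ along which $E|_L$ acquires exactly one negative summand, of degree precisely $-b=-(1+h^1(E))$. I expect to extract this by feeding the deficiency hypothesis into the inductive, restriction-based machinery behind Theorem \ref{thm-Introh1}, tracking how the deficiency distributes across $0\to E(-1)\to E\to E|_L\to 0$ and identifying $\Hom(E,\OO_L(-b))$ with a summand of $(E|_L)^\vee(-b)$. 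Projecting $E|_L$ onto this summand then yields a surjection $E\twoheadrightarrow\OO_L(-b)$.

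Let $S=\ker\bigl(E\twoheadrightarrow \OO_L(-b)\bigr)$, so $0\to S\to E\to\OO_L(-b)\to 0$. Since $b\geq 1$, $H^0(\OO_L(-b))=0$, whence $h^0(S)=h^0(E)=\beta-\alpha=\beta_{r,\mu(S)}$; that is, $S$ has deficiency $0$. As $S\subset E$ with $E$ semistable, $\mumax(S)\leq\mu<\alpha$, and a complementary estimate (again from the restriction analysis) gives $\mumin(S)>\alpha-2$, so the Harder--Narasimhan slopes of $S$ lie in $(\alpha-2,\alpha)$. The deficiency-zero classification (Theorem \ref{thm-maxHN}) then forces $S$ to be a twisted Steiner bundle $0\to\OO(\alpha-2)^{a-1}\to\OO(\alpha-1)^{a+r-1}\to S\to 0$, which is exactly conclusion (2); and since $h^1(S)=0$ for Steiner bundles, the long exact sequence gives $h^1(E)=h^1(\OO_L(-b))=b-1$, confirming $b=1+h^1(E)$ and that $E$ fails to be globally generated along $L$. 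For the remaining branch, if $E$ \emph{is} globally generated I use the syzygy sequence $0\to M_E\to H^0(E)\te\OO\to E\to 0$; the long exact sequence gives $H^1(E)\cong H^2(M_E)\cong H^0(M_E^\vee(-3))^\vee$, and a slope computation (valid since $h^0(E)>r$ in this range) shows $M_E^\vee(-3)$ has negative slope, so the vanishing $H^0(M_E^\vee(-3))=0$ follows, placing $E$ in conclusion (1); alternatively this acyclicity is obtained by adapting the cohomological argument of Theorem \ref{thm-Introh1} to the boundary deficiency $\delta=\alpha$.

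The main obstacle is the line analysis in the middle paragraph: controlling the jumping-line behavior of $E|_L$ finely enough to show that the failure of global generation occurs along a \emph{single} line carrying \emph{exactly one} negative summand of degree precisely $-(1+h^1(E))$, rather than, say, spreading across several lines or degenerating at isolated points. Closely tied to this is the verification in the third paragraph that the modification kernel $S$ has Harder--Narasimhan slopes in $(\alpha-2,\alpha)$, which is exactly the hypothesis needed to invoke the deficiency-zero classification; establishing $\mumin(S)>\alpha-2$ is the delicate half, since $S$ is only a subsheaf of the semistable $E$ and need not itself be semistable.
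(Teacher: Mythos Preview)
Your restriction/jumping-line approach to case (2) is the wrong route, and the obstacle you flag at the end is a genuine gap you have not closed. The paper avoids it entirely by reversing the order of construction: instead of first locating the line $L$ and then building the surjection $E\twoheadrightarrow\OO_L(-b)$, let $S$ be the \emph{image} of the canonical evaluation $H^0(E)\otimes\OO_{\PP^2}\to E$ and set $T=E/S$. Once one knows $E$ is not globally generated in codimension $1$, Lemma \ref{lem-ggcodim1}(2) gives $c_1(T)=1$ directly. Since $h^0(S)=h^0(E)=\beta-\alpha=\beta_{r,\mu-1/r}$ (your own computation), and since $\mumax(S)\leq\mu<\alpha$ while $\mumin(S)\geq\mu-1>\alpha-2$ (the latter by an elementary Harder--Narasimhan count---if the maximal destabilizing subsheaf of $S$ has rank $r-1$ and slope $\leq\mu$, the rank-$1$ quotient has $c_1\geq r\mu-1-(r-1)\mu=\mu-1$---no restriction analysis is needed), Corollary \ref{cor-maxSteiner} forces $S$ to be a twisted Steiner bundle with $H^1(S)=0$. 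Then $H^0(T)=0$ makes $T$ pure one-dimensional, hence a line bundle on a line, so $T\cong\OO_L(-b)$. The line emerges \emph{after} classifying $S$, not before; no Grauert--M\"ulich, no control of splitting types on special lines, and no argument that the failure locus avoids isolated points is required.

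Two smaller issues. First, the correct dichotomy is whether $E$ is globally generated \emph{in codimension $1$}: if it is, Theorem \ref{thm-h1}(2) immediately gives $H^1(E)=0$ and full global generation (case (1)); if not, the argument above gives case (2). Your split on plain global generation happens to coincide only because Theorem \ref{thm-h1}(2) already upgrades ``globally generated in codimension $1$'' to ``globally generated'' at deficiency $\alpha$, but that is something to prove, not assume. Second, your syzygy argument for case (1) is incomplete: a negative slope for $M_E^\vee(-3)$ does not give $H^0(M_E^\vee(-3))=0$ unless $M_E$ is semistable, which you have not established. Your stated fallback of adapting the cohomological argument behind Theorem \ref{thm-h1} to the boundary case $\delta=\alpha$ is exactly what is required and is precisely what the paper does.
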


The first case only arises when $\chi = \beta - \alpha$.  In that case, $B^{\beta-\alpha}(r,\mu,\chi)$ is the entire moduli space.  

On the other hand, when $\chi < \beta - \alpha$ then the situation is much more interesting.  Then the locus $B^{\beta-\alpha}(r,\mu,\chi)$ is nonempty if and only if there are \emph{semistable} extensions of the form   $$0\to S\to E\to \OO_L(-b)\to 0.$$  At the end of the paper we analyze the semistability of these extensions, and get the following consequences for  Brill-Noether loci.

\begin{theorem}[Theorem \ref{thm-existence}, Corollary \ref{cor-irr} and Proposition \ref{prop-dim}]\label{thm-Introexistence}Let $\mu > 0$ be a non-integer, and express it as $\mu = \alpha-1 + \frac{a}{r}$. Assume that $\chi < \beta - \alpha$.  We let $\varphi = (1+\sqrt{5})/2$ be the golden ratio.
\begin{enumerate}
\item If $\frac{a}{r} > \varphi-1$, then $B^{\beta - \alpha}(r,\mu,\chi)$ is nonempty.
\item  Suppose $\frac{a}{r} < \varphi -1.$  Consider the sequence of ratios of Fibonacci numbers $$\frac{0}{1},\frac{1}{2},\frac{3}{5},\frac{8}{13},\frac{21}{34},\frac{55}{89},\ldots$$ which monotonically converges to $\varphi-1$.  Let $\rho$ be the smallest such ratio with $\frac{a-1}{r} \leq \rho$.

\begin{enumerate}
\item If $\rho \geq \frac{a}{r}$, then $B^{\beta-\alpha}(r,\mu,\chi)$ is empty.

\item Otherwise, $B^{\beta-\alpha}(r,\mu,\chi)$ is nonempty if $\chi \ll 0$.
\end{enumerate}
\end{enumerate}
The Brill-Noether locus $B^{\beta-\alpha}(r,\mu,\chi)$ is irreducible if it is nonempty.  If furthermore $\frac{a-1}{r}> \varphi-1$, then the codimension of $B^{\beta-\alpha}(r,\mu,\chi)$ is $$ r(\beta-\alpha-\chi)+c_1-1.$$ This is strictly smaller than the expected codimension $(\beta-\alpha)(\beta-\alpha-\chi)$ except in the case where $\alpha = 1$ and $\chi = \beta-\alpha-1$.
\end{theorem}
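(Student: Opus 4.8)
The plan is to leverage the structural dichotomy of Theorem~\ref{thm-defAlpha}. Since we assume $\chi < \beta - \alpha$, the first alternative (with $H^1(E)=0$) forces $\chi = \beta-\alpha$ and is excluded, so every $E \in B^{\beta-\alpha}(r,\mu,\chi)$ sits in an extension $0 \to S \to E \to \OO_L(-b) \to 0$ with $S$ the Steiner bundle of the stated form, $L$ a line, and $b = 1 + h^1(E) \geq 2$. Conversely, I would first check that any such extension has the right invariants: from the resolution of $S$ one computes $\mu(S) = \alpha - 1 + \frac{a-1}{r}$, and since $S$ is a maximal Steiner bundle, $H^1(S)=0$ and $h^0(S) = \beta_{r,\mu(S)} = \beta - \alpha$; as $H^0(\OO_L(-b))=0$ for $b\geq 1$, the long exact sequence gives $h^0(E)=\beta-\alpha$. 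Thus nonemptiness of $B^{\beta-\alpha}(r,\mu,\chi)$ is equivalent to the existence of a \emph{semistable} extension of this shape with the prescribed $b$ (equivalently $\chi$), and the whole locus is the image of the family of such semistable extensions parametrized by $(S,L,\text{extension class})$.

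The technical core is the semistability analysis. Given the extension, I would classify potential destabilizing subsheaves $F \subseteq E$ by intersecting with $S$: writing $F_S = F \cap S \subseteq S$ and $T = F/F_S \hookrightarrow \OO_L(-b)$, subsheaves with $T=0$ cannot destabilize because $\mu(S)<\mu(E)$ and $S$ is semistable. When $T\neq 0$ it is a twist $\OO_L(-b')$ with $c_1(T)=[L]$, so $\mu(F) = \frac{c_1(F_S)\cdot H + 1}{\rk F_S}$; combined with $c_1(F_S)\cdot H \leq \rk(F_S)\,\mu(S)$ from semistability of $S$, destabilization forces $F_S$ to be a rank-deficient subsheaf of $S$ of nearly maximal slope. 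The existence of such subsheaves is governed by the Dr\'ezet--Le Potier theory of exceptional bundles on $\PP^2$: the relevant obstruction is an exceptional (Fibonacci) bundle whose slope lies near $\alpha - 1 + \frac{a-1}{r}$, which is exactly why the Fibonacci ratios $f_{2i}/f_{2i+1}$ and their limit $\varphi-1$ enter. I expect this to be the main obstacle, since one must both (i) rule out destabilizing $F_S$ for a general extension class, including the subtle lifting condition that an $F_S\subseteq S$ extends to a subsheaf of $E$ only when the extension class dies in $\Ext^1(T, S/F_S)$, and (ii) in the Fibonacci regime actually construct a semistable $E$.

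With this in hand, the three cases follow by locating $\frac{a}{r}$ relative to $\varphi-1$ and the Fibonacci ratios. When $\frac{a}{r} > \varphi-1$ I would take $S$ a general stable Steiner bundle and a general extension class, and show that no $F_S$ of the forbidden type survives, giving nonemptiness. When $\frac{a}{r}<\varphi-1$, let $\rho$ be the smallest Fibonacci ratio with $\frac{a-1}{r}\leq\rho$: if $\rho \geq \frac{a}{r}$ the exceptional bundle attached to $\rho$ produces an unavoidable destabilizing subsheaf for every extension and every $b$, so the locus is empty; if $\rho < \frac{a}{r}$ there is room below $\mu(E)$, and a construction with $S$ built from the corresponding Fibonacci bundle and $b\gg 0$ (i.e.\ $\chi\ll 0$) yields a semistable $E$, the large $b$ also relaxing the Gieseker comparison on equal-slope subsheaves. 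Irreducibility then follows because $B^{\beta-\alpha}(r,\mu,\chi)$ is the image of the irreducible family of semistable extensions: the Steiner bundles $S$ fill an irreducible moduli space, lines move in $(\PP^2)^\ast$, and extension classes fill a projective space.

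Finally, for the dimension I would work in the cleanest regime $\frac{a-1}{r}>\varphi-1$, where $S$ is stable and unobstructed. Computing $\dim \Ext^1(\OO_L(-b),S)$ by Riemann--Roch (using $0\to \OO(-b-1)\to\OO(-b)\to\OO_L(-b)\to 0$ and $H^{>0}(S(b))=0$ for $b\gg 0$), adding the two dimensions of lines and the dimension of the family of $S$, and subtracting the automorphisms of $S$ and the fibers of the classifying map, I would obtain the dimension of the family and compare with $\dim M(r,\mu,\chi)=1-\chi(E,E)$. After simplification, using $\beta-\alpha-\chi = h^1(E) = b-1$, this should collapse to the codimension $r(\beta-\alpha-\chi)+c_1-1$. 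Comparing with the classical expected codimension $(\beta-\alpha)(\beta-\alpha-\chi)=h^0 h^1$ and noting $\beta-\alpha \gg r$ shows the loci carry excess dimension; the direct substitution $\alpha=1$, $c_1=a$, $\beta-\alpha=a+r-1$ pinpoints equality exactly when $\alpha=1$ and $\chi=\beta-\alpha-1$.
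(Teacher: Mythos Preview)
Your overall architecture is right---everything funnels through the extension description of Theorem~\ref{thm-defAlpha}, and your treatment of irreducibility and of the dimension count in the regime $\frac{a-1}{r}>\varphi-1$ matches the paper essentially verbatim.

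There is, however, a genuine gap in part (1). You write that when $\frac{a}{r}>\varphi-1$ you will ``take $S$ a general stable Steiner bundle'' and bound $c_1(F_S)$ using ``semistability of $S$.'' But $S$ has slope $\alpha-1+\frac{a-1}{r}$, not $\alpha-1+\frac{a}{r}$, so the hypothesis $\frac{a}{r}>\varphi-1$ does \emph{not} make $S$ stable (or even semistable) in general: whenever $\frac{a-1}{r}<\varphi-1<\frac{a}{r}$, the general such $S$ is a direct sum $F_{i-1}^{\oplus m}\oplus F_i^{\oplus n}$ of Fibonacci bundles. Your destabilization bound $c_1(F_S)\leq \rk(F_S)\,\mu(S)$ then fails, and more seriously, even after repairing it to $\mu_{\max}(S)$ you still have to prove that a general extension class kills the relevant liftings for \emph{every} $b\geq 2$, not just $b\gg 0$. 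The paper sidesteps this entirely: for part (1) it does not use the extension picture at all, but instead starts from a $\mu$-stable Steiner bundle $F$ of slope $\mu$ (which exists precisely because $\frac{a}{r}>\varphi-1$), chooses a surjection $F\to\OO_Z$ onto $\beta-\chi$ collinear points factoring through a line-bundle summand $\OO_L(\alpha-1)$ of $F|_L$, and takes $E=\ker(F\to\OO_Z)$. Stability of $E$ is then immediate from stability of $F$, and a short computation shows $h^0(E)=\beta-\alpha$.

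For part (2b), your approach is on the right track but vaguer than the paper's. Rather than directly analyzing which $F_S\subset S$ lift, the paper proves a clean criterion: when $\mu(E)>\mu(F_i)$, the extension $E$ is semistable if and only if $\Hom(E,F_j)=0$, where $F_j$ is the Fibonacci bundle of largest slope below $\mu(E)$. Establishing this criterion takes a careful eight-step diagram chase showing that any destabilizing quotient of $E$ is itself a Steiner bundle and hence maps to some Fibonacci $F_k$ with $k\leq j$; the mutation sequences then give $\Hom(E,F_j)\neq 0$. Nonemptiness for $\chi\ll 0$ follows from a separate lemma: for $b\gg 0$ and general extension class, $\Hom(E,F_j)=0$ because the boundary map $\Hom(S,F_j)\to\Ext^1(\OO_L(-b),F_j)$ becomes injective. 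Your lifting condition via $\Ext^1(T,S/F_S)$ is morally dual to this, but the paper's reduction to a single $\Hom$-vanishing is what makes the asymptotic argument tractable.
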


\begin{remark}
Finding out the precise value of $\chi$ that works in Theorem \ref{thm-Introexistence} (2b) is challenging.
In general, when $S$ is not stable, a general extension of the form $$0 \to S \to E \to \OO_L(-b) \to 0$$ may not be stable unless $b\gg0$. For example, there may be no stable sheaves with the Chern character of $E$.  It would be interesting to know whether there exist stable sheaves $E$ arising as such extensions as soon as the Chern character of $E$ is the Chern character of a stable sheaf.
\end{remark}

\subsection*{Organization of the paper} In \S \ref{sec-prelim}, we recall basic facts about moduli spaces of sheaves on $\PP^2$. In \S \ref{sec-bound}, we prove Theorem \ref{thm-sectionBound} which gives an upper bound on $h^0(E)$. In \S \ref{sec-gg}, we prove Theorem \ref{thm-h1} which shows the vanishing of $h^1(E)$ and global generation of $E$ if $h^0(E) > \beta - \alpha$ or if $h^0(E)= \beta - \alpha$ and $E$ is globally generated in codimension 1. In  \S \ref{sec-steiner}, we classify sheaves $E$ that attain the maximum $h^0(E)= \beta$.  In \S \ref{sec-alpha}, we study the Brill-Noether loci $B^{\beta - \alpha}$ and discuss their nonemptiness, irreducibility and dimension.

\subsection*{Acknowledgments} We would like to thank Ben Gould, John Kopper, Woohyung Lee, Yeqin Liu, and Howard Nuer for stimulating discussions about Brill-Noether theory of sheaves on surfaces.

\section{Preliminaries}\label{sec-prelim}
In this section, we collect basic facts and definitions regarding stability and moduli spaces of sheaves on $\PP^2$. We refer the reader to \cite{CoskunHuizengaGokova, HuybrechtsLehn, LePotier} for proofs and more details.

\subsection{Global generation} 
Let $E$ be a torsion-free coherent sheaf and let $$\ev: H^0(E) \otimes \OO_{\PP^2} \to E$$ be the canonical evaluation morphism. 
\begin{enumerate}
\item  $E$ is {\em globally generated} if $\ev$ is surjective. 
\item $E$ is {\em generically globally generated} if the cokernel of $\ev$ is torsion, or equivalently, if the sections of $E$ span a general fiber of $E$.
\item $E$ is {\em globally generated in codimension 1} if the cokernel of $\ev$ is supported on a subscheme of dimension at most 0.
\end{enumerate}
The following implications are immediate:
$$\mbox{Global generation} \Rightarrow \mbox{Global generation in codimension 1} \Rightarrow \mbox{Generic global generation.}$$

\subsection{Slope stability} For a torsion-free coherent sheaf $E$ on a polarized variety $(X,H)$ of dimension $n$, the slope $\mu(E)=\mu_H(E)$ is defined by 
$$\mu(E) = \frac{c_1(E)\cdot H ^{n-1} }{\rk(E)H^n} .$$ The sheaf $E$ is $\mu$-(semi)stable if for all proper subsheaves $F \subset E$ of smaller rank, we have $$\mu(F) \leqor \mu(E).$$
Every torsion-free sheaf $E$ admits a unique Harder-Narasimhan filtration
$$0=E_0 \subset E_1 \subset \cdots \subset E_n = E$$ such that 
$F_i := E_i/E_{i-1}$ are $\mu$-semistable and $$\mu(F_1) > \cdots > \mu(F_n).$$ We define $\mu_{\max}(E) := \mu(F_1)$ and $\mu_{\min}(E) := \mu(F_n)$. Moreover, $\mu$-semistable sheaves admit a Jordan-H\"{o}lder filtration with stable quotients.

\subsection{Gieseker stability} Let $E$ be a pure $d$-dimensional sheaf on a polarized variety $(X,H)$. Define the {\em Hilbert polynomial} $P_E(m)$ and the {\em reduced Hilbert polynomial} $p_E(m)$ by  $$P_E(m) := \chi(E(mH)) = a_d \frac{m^d}{d!} + \mbox{l.o.t}, \quad p_E(m) := \frac{P_E(m)}{a_d}.$$
The sheaf $E$ is Gieseker (semi)stable if for every proper subsheaf $F$, we have $$p_F(m) \leqor p_E(m)$$ for $m\gg0$. When we discuss (semi)stability we will always mean Gieseker (semi)stability.  For torsion-free sheaves, we have the implications
$$\mu\mbox{-stable} \Rightarrow \mbox{stable} \Rightarrow \mbox{semistable} \Rightarrow \mu\mbox{-semistable}.$$ 
 Every pure coherent sheaf admits a unique Harder-Narasimhan filtration for Gieseker semistability. Moreover, every semistable sheaf admits a Jordan-H\"{o}lder filtration with stable quotients. While the filtration is not unique, the associated graded object is unique up to isomorphism. Two semistable sheaves are called {\em S-equivalent} if their associated graded objects for the Jordan-H\"{o}lder filtration are isomorphic.

Whenever we talk about stability on $\P^2$ we always use the polarization $H=\OO_{\P^2}(1)$. Also, we simply write $c_1(E)$ for $c_1(E)\cdot \OO_{\P^2}(1)$.

\subsection{Moduli spaces} Given a polarized variety $(X,H)$,  Gieseker \cite{Gieseker} and Maruyama \cite{Maruyama}  constructed projective moduli spaces of sheaves $M_{X,H} ({\bf v})$ parameterizing S-equivalence classes of semistable sheaves on $X$ with a fixed Chern character ${\bf v}$. 

When $X= \PP^2$, these moduli spaces have been studied in great detail. Dr\'ezet and Le Potier \cite{DrezetLePotier} have classified Chern characters of stable sheaves on $\PP^2$. 
For our purposes, it will be most convenient to record the information of the Chern character by the rank $r$, the slope $\mu$ and the Euler characteristic $\chi$.  We will write $M(r, \mu, \chi)$ for $M_{\PP^2, \OO(1)}(r, \mu, \chi)$.
When nonempty, the moduli space $M(r, \mu, \chi)$ is a normal, factorial,  irreducible projective variety of the expected dimension $1- \chi({\bf v}, {\bf v})$. If $M(r, \mu, \chi)$ is positive dimensional, then the stable locus  $M(r, \mu, \chi)^s$ is nonempty and  $M(r, \mu, \chi)$ is smooth along $M(r, \mu, \chi)^s$. 

Let $B^k(r, \mu, \chi) \subset M(r, \mu, \chi)$ denote the Brill-Noether locus defined as the closure of the locus of stable sheaves that have at least $k$ linearly independent sections.  Within $M(r,\mu,\chi)^s$, the locus $B^k(r, \mu, \chi)$ has a natural determinantal scheme structure \cite{CostaMR} (see also \cite[Proposition 2.7]{CoskunHuizengaWoolf}). Consequently, every component of $B^k(r, \mu, \chi)$ has codimension  less than or equal to the expected codimension
$$\ecodim(B^k(r, \mu, \chi)) = k (k-\chi).$$
When $r, \mu$ and $\chi$ are clear from the context, we will drop them from the notation.

\begin{remark} Gould, Lee and Liu study certain Brill-Noether loci in $M(r, \mu, \chi)$. They show that $B^r(r, \mu, \chi)$ is nonempty. Further, they study the case $\mu=\frac{1}{r}$ in detail and find that all the Brill-Noether loci in this case are irreducible and of the expected dimension. In contrast, when $\mu > \frac{1}{2}$ and is not an integer and $\chi \ll 0$, they find examples of reducible Brill-Noether loci with components of different dimensions \cite[Theorem 1.1]{GouldLiu}. \end{remark}

\section{Bound on sections}\label{sec-bound}

In this section, we give an upper bound on $h^0(E)$ under various assumptions on the Harder-Narasimhan filtration of $E$.  For a semistable sheaf $E$, this gives the bound $h^0(E) \leq \beta_{r,\mu}$ from  the introduction.

\begin{theorem}\label{thm-sectionBound}
Let $E$ be a torsion-free sheaf of rank $r$ on $\P^2$ with $\mumax(E) \geq -1$.  Let $\alpha = \lfloor \mumax (E)\rfloor +1$.  Then $$h^0(E) \leq r\alpha\left(\mumax(E)-\frac{1}{2}\alpha+\frac{3}{2}\right).$$
\end{theorem}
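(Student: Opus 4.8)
The plan is to reduce first to the $\mu$-semistable case, and then to bound the sections of a semistable sheaf by peeling off twists via restriction to a general line.

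For the reduction, let $0 = E_0 \subset E_1 \subset \cdots \subset E_n = E$ be the Harder--Narasimhan filtration, with $\mu$-semistable quotients $F_j = E_j/E_{j-1}$ of rank $r_j$ and slope $\mu_j$, where $\mumax(E) = \mu_1 > \cdots > \mu_n$. Since $h^0$ is subadditive in short exact sequences, $h^0(E) \le \sum_j h^0(F_j)$, and any factor with $\mu_j < 0$ contributes nothing, as a nonzero section would give a subsheaf of nonnegative slope inside a semistable sheaf of negative slope. Writing $b(\nu) := \alpha_\nu(\nu - \tfrac12\alpha_\nu + \tfrac32)$ with $\alpha_\nu = \lfloor \nu\rfloor + 1$, so that the asserted bound reads $h^0(E) \le r\,b(\mumax(E))$, I would check that $b$ is nonnegative and nondecreasing on $[-1,\infty)$: on each interval $[\alpha-1,\alpha)$ it is linear with positive slope $\alpha_\nu$, and at each integer it jumps up by exactly $1$. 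Granting the semistable case for each $F_j$ with $\mu_j \ge 0$, monotonicity then gives $h^0(E) \le \sum_{\mu_j\ge 0} r_j\,b(\mu_j) \le b(\mu_1)\sum_j r_j = r\,b(\mumax(E))$, where the middle inequality uses $b(\mu_j)\le b(\mu_1)$ together with $b(\mu_1)\ge 0$ to extend the sum to all $j$.

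It remains to treat a $\mu$-semistable sheaf $E$ of slope $\mu \ge 0$ and prove $h^0(E) \le r\,b(\mu) = \beta_{r,\mu}$. Fix a general line $L$; since $E$ is torsion-free, the sequence $0 \to E(-k-1) \to E(-k) \to E(-k)|_L \to 0$ is exact for every $k$, so $h^0(E(-k)) - h^0(E(-k-1)) \le h^0(E(-k)|_L)$. As $E(-\alpha)$ is semistable of negative slope, $h^0(E(-\alpha)) = 0$, and summing the telescoping inequalities for $k = 0,\ldots,\alpha-1$ yields $h^0(E) \le \sum_{k=0}^{\alpha-1} h^0(E(-k)|_L)$. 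On $L \cong \P^1$, Riemann--Roch gives $h^0(E(-k)|_L) = \chi(E(-k)|_L) + h^1(E(-k)|_L) = r(\mu - k + 1) + h^1(E(-k)|_L)$, and a direct computation shows $\sum_{k=0}^{\alpha-1} r(\mu-k+1) = \beta_{r,\mu}$. Hence $h^0(E) \le \beta_{r,\mu} + \sum_{k=0}^{\alpha-1} h^1(E(-k)|_L)$, and the theorem follows once every error term vanishes.

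The crux, and the step I expect to be the main obstacle, is therefore the vanishing $h^1(E(-k)|_L) = 0$ on a general line for $0 \le k \le \alpha-1$. Each such twist $E(-k)$ is $\mu$-semistable of slope $\mu-k \ge 0$, so it suffices to show that a $\mu$-semistable sheaf $G$ on $\P^2$ with $\mu(G) \ge 0$ has $h^1(G|_L) = 0$, equivalently $\mumin(G|_L) \ge -1$, for general $L$. The naive Grauert--Mülich bound only controls consecutive gaps in the generic splitting type, and by itself it does not forbid a summand of degree $\le -2$ while keeping the average nonnegative; the real content is to rule this out, i.e.\ to bound the generic splitting type of a semistable sheaf sharply enough, presumably via the relative Harder--Narasimhan filtration over the space of lines and a descent argument controlling the slope of the resulting subsheaf of $G$. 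I would isolate this as a standalone restriction lemma and establish it first, since everything else then reduces to the clean bookkeeping above.
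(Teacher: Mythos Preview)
Your reduction to the $\mu$-semistable case via the Harder--Narasimhan filtration and the monotonicity of $b$ is correct. The problem is the restriction lemma you isolate at the end: it is false. Take $E = S^4 T_{\P^2}(-6)$. Since $T_{\P^2}$ is $\mu$-stable, $E$ is $\mu$-semistable in characteristic zero, of rank $5$ and slope $0$. From the Euler sequence $T_{\P^2}|_L \cong \OO_L(2)\oplus\OO_L(1)$ for every line $L$, hence $E|_L \cong \bigoplus_{i=-2}^{2}\OO_L(i)$ and $h^1(E|_L)=1$. Your telescoping inequality then gives only $h^0(E)\le \beta_{5,0}+1 = 6$, not the required $5$. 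The splitting type $(2,1,0,-1,-2)$ has every consecutive gap equal to $1$, so Grauert--M\"ulich is satisfied and there is nothing for a descent argument to grab onto: the relative Harder--Narasimhan pieces fail to descend precisely because the gaps are not $\ge 2$, and indeed $E$ has no destabilizing subsheaf to produce.

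The paper avoids this by never asking for $h^1(E|_L)=0$ for an arbitrary semistable $E$. It runs a double induction on $\alpha$ and $r$, splitting on whether $E$ is generically globally generated. If it is, then for a general $L$ the restriction $E|_L$ is globally generated, so every summand has nonnegative degree and $h^1(E|_L)=0$ for free; a single restriction step reduces $\alpha$ by one. If not, one replaces $E$ by the image $F\subset E$ of the evaluation map $H^0(E)\otimes\OO_{\P^2}\to E$, which has $h^0(F)=h^0(E)$, strictly smaller rank, and $\mumax(F)\le\mumax(E)$, and the induction on $r$ applies. This dichotomy on generic global generation is the missing idea: it is exactly what purchases the vanishing of $h^1$ on the line when you need it, and its failure is exactly what lets you drop the rank in the other branch.
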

\begin{proof}
The proof is by induction on both $\alpha$ and $r$.  When $\alpha=0$, we have $\mumax(E)<0$ and so $E$ has no sections by the semistability of its Harder-Narasimhan factors.  Thus both sides of the inequality are $0$ and the result holds.  Next suppose $r=1$, in which case $E\cong I_Z(k)$ for some $0$-dimensional subscheme $Z\subset \P^2$ and some integer $k\geq-1$. Then $\mumax(E)=\mu(E)=k$, $\alpha=k+1$, and $$h^0(E)\leq h^0(\OO(k))={k+2\choose 2}=r\alpha\left(\mumax(E)-\frac{1}{2}\alpha+\frac{3}{2}\right),$$ and the asserted bound holds. We now suppose both $\alpha>0$ and $r>1$.  Then the right hand side is positive, so we  may assume $E$ has a section.  
 
\emph{Case 1:} suppose that $E$ is generically globally generated.  In this case, we let $L$ be a general line (which therefore does not meet any singularities of $E$) and consider the restriction sequence $$0\to E(-1)\to E\to E|_L\to 0.$$ Since $E$ is generically globally generated, $E|_L$ is globally generated.  Therefore, $h^1(E|_L)=0$ and $h^0(E|_L) = \chi(E|_L) = r+c_1(E).$ By induction on $\alpha$, the theorem holds for $E(-1)$ and \begin{align*}h^0(E) &\leq h^0(E(-1)) + h^0(E|_L)\\
& \leq r(\alpha-1)\left(\mumax(E)-1-\frac{1}{2}(\alpha-1)+\frac{3}{2}\right) + r+c_1(E)\\
& \leq r(\alpha-1)\left(\mumax(E)-\frac{1}{2}\alpha+1\right)+r(\mumax(E)+1)\\
& = r\alpha\left(\mumax(E)-\frac{1}{2}\alpha+\frac{3}{2}\right).
\end{align*}
Thus the theorem holds in this case.

\emph{Case 2:} suppose that $E$ is not generically globally generated.  We consider the canonical evaluation map $H^0(E) \te \OO_{\P^2} \to E$, and let its image be $F\subset E$.  Then $F$ is torsion-free,  globally generated, has rank $r'<r$, and $h^0(E)=h^0(F)$.  Furthermore, we have $\mumax(F)\leq \mumax(E)$ since $F$ is a subsheaf of $E$.  Let $\alpha' = \lfloor \mumax(F)\rfloor+1$, so that $\alpha'\leq \alpha$.  If $\alpha' = \alpha$, it is clear that $$h^0(E)=h^0(F) \leq r'\alpha'\left(\mumax(F) -\frac{1}{2}\alpha'+\frac{3}{2}\right) < r\alpha\left(\mumax(E)-\frac{1}{2}\alpha +\frac{3}{2}\right),$$
and we are done.  Suppose instead that $\alpha'<\alpha$.  Since $\mumax(F) < \alpha'$ and $\mumax(E) \geq \alpha-1$, we estimate
\begin{align*}h^0(E) = h^0(F) &\leq r'\alpha'\left(\mumax(F) -\frac{1}{2}\alpha'+\frac{3}{2}\right)\\
&< r'\alpha'\left(\alpha'-\frac{1}{2}\alpha'+\frac{3}{2}\right)\\
&= r'\left( {\alpha'+2\choose 2} -1\right)\\
&< r {\alpha+1\choose 2}\\
&\leq r\alpha\left(\mumax(E)-\frac{1}{2}\alpha+\frac{3}{2}\right).
\end{align*}
This completes the proof.
\end{proof}

If the sheaf $E$ is not too unstable, then the bound can be improved.
We first fix some notation.

\begin{definition}\label{def-alpha_beta}
For a rank $r$ and slope $\mu\geq -1$, we define \begin{align*}\alpha=\alpha_\mu&:= \lfloor \mu\rfloor+1,\\
\beta=\beta_{r,\mu}&:= r\alpha\left(\mu-\frac{1}{2}\alpha+\frac{3}{2}\right).
\end{align*}
\end{definition}

\begin{corollary}\label{cor-sectionBound}
Let $E$ be a torsion-free sheaf on $\P^2$ with rank $r$ and slope $\mu \geq -1$, and let $\alpha= \alpha_\mu$ and $\beta = \beta_{r,\mu}$.  Suppose that $$\alpha - 2 \leq \mumin(E) \leq\mumax(E) < \alpha.$$   Then $$h^0(E) \leq \beta.$$
\end{corollary}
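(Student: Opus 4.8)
The plan is to reduce the bound on $E$ to a bound on the semistable factors of its Harder–Narasimhan filtration and to feed each factor into Theorem \ref{thm-sectionBound}. First I would note that applying the theorem directly to $E$ is too weak: it yields $h^0(E)\leq r\alpha(\mumax(E)-\tfrac12\alpha+\tfrac32)$, and since $\mumax(E)$ can strictly exceed $\mu$ this can be larger than $\beta$. Passing to the HN factors is exactly what replaces $\mumax(E)$ by the average slope $\mu$. So I would write $0=E_0\subset E_1\subset\cdots\subset E_n=E$ for the HN filtration, with torsion-free semistable factors $F_i=E_i/E_{i-1}$ of rank $r_i$ and slope $\mu_i$, so that $\sum_i r_i=r$ and $\sum_i r_i\mu_i=c_1(E)=r\mu$. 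Taking global sections in each sequence $0\to E_{i-1}\to E_i\to F_i\to 0$ gives the subadditivity $h^0(E)\leq\sum_i h^0(F_i)$, reducing everything to a per-factor estimate. (The degenerate case $\alpha=0$ is handled immediately: then $\mumax(E)<0$, so $h^0(E)=0=\beta$.)

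The crux will be the per-factor bound, stated with the \emph{single fixed} $\alpha=\alpha_\mu$ of $E$ rather than the $\alpha$ attached to each $F_i$. Each $F_i$ is semistable with $\mu_i\in[\alpha-2,\alpha)$ by hypothesis, so $\alpha_{\mu_i}=\lfloor\mu_i\rfloor+1\in\{\alpha-1,\alpha\}$, and I would prove in both cases that
$$h^0(F_i)\leq r_i\alpha\left(\mu_i-\tfrac12\alpha+\tfrac32\right).$$
If $\mu_i\in[\alpha-1,\alpha)$ then $\alpha_{\mu_i}=\alpha$ and this is exactly Theorem \ref{thm-sectionBound} applied to $F_i$. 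If $\mu_i\in[\alpha-2,\alpha-1)$ then $\alpha_{\mu_i}=\alpha-1$ and the theorem instead gives $h^0(F_i)\leq r_i(\alpha-1)(\mu_i-\tfrac12(\alpha-1)+\tfrac32)$, so I would compare the two expressions via the elementary identity
$$\alpha\left(\mu_i-\tfrac12\alpha+\tfrac32\right)-(\alpha-1)\left(\mu_i-\tfrac12(\alpha-1)+\tfrac32\right)=\mu_i-\alpha+2\geq 0,$$
where the final inequality is precisely the assumption $\mu_i\geq\mumin(E)\geq\alpha-2$. This is the one place the lower bound on $\mumin$ enters, and it is the heart of the argument.

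Finally I would sum the per-factor bound and exploit that the right-hand side is \emph{linear} in the pairs $(r_i,r_i\mu_i)$:
$$h^0(E)\leq\sum_i r_i\alpha\left(\mu_i-\tfrac12\alpha+\tfrac32\right)=\alpha\left(\sum_i r_i\mu_i-\tfrac12\alpha\sum_i r_i+\tfrac32\sum_i r_i\right)=r\alpha\left(\mu-\tfrac12\alpha+\tfrac32\right)=\beta.$$
The main obstacle is not geometric but structural: the individual factors split into two distinct ``$\alpha$-types,'' and a naive estimate grouping them by their own $\alpha_{\mu_i}$ is not summable because the per-unit-rank bound is convex rather than concave across the threshold $\mu_i=\alpha-1$. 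Forcing a single $\alpha$ through all factors — legitimized by the inequality $\mu_i-\alpha+2\geq0$ above — makes the bound additive, so it telescopes over the filtration to exactly $\beta$. The only routine checks to record are that the HN factors are torsion-free and satisfy $\mu_i\geq-1$ (automatic from $\mu_i\geq\alpha-2$ once $\alpha\geq1$), so that Theorem \ref{thm-sectionBound} indeed applies to each of them.
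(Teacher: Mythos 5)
Your proposal is correct and follows essentially the same route as the paper: pass to the Harder--Narasimhan factors, apply Theorem \ref{thm-sectionBound} to each, use the hypothesis $\mu_i \geq \alpha-2$ to replace each factor's own $\alpha_{\mu_i}$ by the common $\alpha$ via the identity $\alpha(\mu_i-\tfrac12\alpha+\tfrac32)-(\alpha-1)(\mu_i-\tfrac12(\alpha-1)+\tfrac32)=\mu_i-\alpha+2\geq 0$, and sum using linearity in $(r_i, r_i\mu_i)$. The only cosmetic difference is that you make explicit the motivation (why applying the theorem to $E$ directly is too weak) and the check that each $\mu_i\geq -1$, both of which the paper leaves implicit.
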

\begin{proof}
If $\alpha=0$, both sides are $0$. We henceforth assume $\alpha>0$. Consider the Harder-Narasimhan filtration of $E$, with factors $\gr_1,\ldots,\gr_\ell$.  Let $r_i,\mu_i$ be the rank and slope of $\gr_i$, and let $\alpha^i = \alpha_{\mu_i}$ and $\beta^i = \beta_{r_i,\mu_i}$.  By Theorem \ref{thm-sectionBound}, we have $h^0(\gr_i) \leq \beta^i$.  Our assumptions on $\mumin(E)$ and $\mumax(E)$ give that each $\alpha^i$ equals either $\alpha-1$ or $\alpha$.  If $\alpha^i = \alpha-1$, then it is easy to verify that $$\beta^i = r_i (\alpha-1)\left(\mu_i-\frac{1}{2}(\alpha-1)+\frac{3}{2}\right)\leq r_i\alpha\left(\mu_i-\frac{1}{2}\alpha+\frac{3}{2}\right)$$ since $\mu_i \geq \alpha-2$.  If instead $\alpha^i = \alpha$ then we get $\beta^i = r_i\alpha(\mu_i-\frac{1}{2}\alpha+\frac{3}{2})$ by definition.  Then we compute $$h^0(E) \leq \sum_i h^0(\gr_i)\leq \sum_i\beta^i \leq \sum_i r_i \alpha\left(\mu_i-\frac{1}{2}\alpha+\frac{3}{2}\right)= r\alpha\left(\mu-\frac{1}{2}\alpha+\frac{3}{2}\right)=\beta$$ since $r = \sum_i r_i$ and $\mu = \frac{1}{r} \sum_i r_i\mu_i$.
\end{proof}

The main results of this paper concern the structure of semistable sheaves with close to the maximal number $\beta$ of sections. We therefore make the following definition.

\begin{definition}
If a $\mu$-semistable sheaf $E$ with rank $r$ and slope $\mu$ has $h^0(E)=\beta - \delta$, we say that $E$ has \emph{deficiency $\delta$}.  A sheaf with deficiency $0$ is \emph{maximal}.
\end{definition}   

Before proceeding we note a couple obvious facts about the $\beta$ function defined in Definition \ref{def-alpha_beta}.

\begin{lemma}\label{lem-obvious_beta_facts}
\begin{enumerate}
\item For an integer $k\geq -1$, we have $\alpha_k = k+1$ and $$\beta_{r,k}=r{k+2\choose 2} = h^0(\OO_{\P^2}(k)^{\oplus r}).$$
\item If $r\geq 1$ and $0\leq \mu'<\mu$, then $\beta_{r,\mu'}< \beta_{r,\mu}$.

\item  $\beta_{r,\mu-1}= \beta_{r,\mu}-r(\mu+1).$
\end{enumerate}
\end{lemma}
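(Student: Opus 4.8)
The plan is to treat all three parts as direct computations from Definition \ref{def-alpha_beta}; the only genuine input is the elementary behaviour of $\alpha_\mu=\lfloor\mu\rfloor+1$. Two facts will be used repeatedly: for an integer $k$ one has $\alpha_k=k+1$, and for any real $\mu$ one has $\alpha_{\mu-1}=\lfloor\mu-1\rfloor+1=\lfloor\mu\rfloor=\alpha_\mu-1$. Everything reduces to substituting these into the formula $\beta_{r,\mu}=r\alpha(\mu-\tfrac12\alpha+\tfrac32)$ and simplifying.

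For part (1), I would substitute $\alpha_k=k+1$ and simplify the bracket $k-\tfrac12(k+1)+\tfrac32=\tfrac{k+2}{2}$, so that $\beta_{r,k}=r(k+1)\tfrac{k+2}{2}=r\binom{k+2}{2}$. Since $h^0(\OO_{\P^2}(k))=\binom{k+2}{2}$ for every integer $k\geq-1$ (this is $0$ when $k=-1$), the identity $\beta_{r,k}=h^0(\OO_{\P^2}(k)^{\oplus r})$ follows at once.

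For part (3), I would use $\alpha_{\mu-1}=\alpha-1$ (with $\alpha=\alpha_\mu$) and substitute $x=\mu-\tfrac{\alpha}{2}$. A short computation shows that the argument of $\beta_{r,\mu-1}$ becomes $(\mu-1)-\tfrac{\alpha-1}{2}+\tfrac32=x+1$, so that $\beta_{r,\mu}=r\alpha(x+\tfrac32)$ and $\beta_{r,\mu-1}=r(\alpha-1)(x+1)$. Expanding the difference gives $r\bigl(\tfrac{\alpha}{2}+x+1\bigr)$, and since $\tfrac{\alpha}{2}+x=\mu$ this equals $r(\mu+1)$, which is exactly the claimed identity.

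For part (2), the main (and only mildly delicate) obstacle is that $\beta_{r,\mu}$ is merely piecewise linear in $\mu$: the coefficient $\alpha$ is constant on each interval $[n,n+1)$ but jumps at integers, so the function is not smooth and one must rule out a downward jump. I would first observe that on $[n,n+1)$ (with $n\geq0$) we have $\alpha=n+1$ and $\beta_{r,\mu}=r(n+1)\bigl(\mu-\tfrac{n}{2}+1\bigr)$, which is strictly increasing with positive slope $r(n+1)$. It then remains to compare the pieces across each integer breakpoint $m\geq1$; a one-line computation shows that $\beta_{r,m}$ exceeds the left-hand limit $\lim_{t\to m^-}\beta_{r,t}$ by exactly $r>0$, so the function jumps \emph{up} at every integer. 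Combining strict monotonicity on each piece with these upward jumps shows that $\mu\mapsto\beta_{r,\mu}$ is strictly increasing on $[0,\infty)$, which yields $\beta_{r,\mu'}<\beta_{r,\mu}$ whenever $0\leq\mu'<\mu$.
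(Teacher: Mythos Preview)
Your proof is correct. Parts (1) and (3) are the direct computations the paper has in mind, and for part (2) your piecewise-linear analysis (positive slope on each $[n,n+1)$ plus an upward jump of $r$ at each integer) is a clean self-contained variant of what the paper suggests---namely, adapting the chain of binomial inequalities from the end of the proof of Theorem~\ref{thm-sectionBound}; your left-limit and value-at-$m$ computations are exactly $r\bigl(\binom{\alpha'+2}{2}-1\bigr)$ and $r\binom{\alpha+1}{2}$ in disguise, so the two arguments coincide.
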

\begin{proof}
(1) is clear.  The proof of (2) is easily adapted from the final part of the proof of Theorem \ref{thm-sectionBound}.  Part (3) is a direct computation.
\end{proof}

\section{Global generation and nonspeciality}\label{sec-gg}

The main goal of this section is to show that semistable sheaves with sufficiently many sections must be nonspecial and have good global generation properties. Our primary result in this direction is the following.
 
 \begin{theorem}\label{thm-h1}
Let $E$ be a $\mu$-semistable sheaf on $\P^2$ of rank $r$, slope $\mu\geq 0$, and deficiency $\delta$, so that $h^0(E) = \beta-\delta$.

\begin{enumerate}
\item If $\delta < \alpha$, then $H^1(E) = 0$ and $E$ is globally generated.

\item If $\delta=\alpha$ and $E$ is globally generated in codimension $1$, then $H^1(E)=0$ and $E$ is actually globally generated.
\end{enumerate}
\end{theorem}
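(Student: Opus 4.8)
The plan is to reduce both conclusions of both parts to the single vanishing $H^1(E(-1))=0$, that is, to $0$-regularity of $E$ in the sense of Castelnuovo--Mumford. The point is that since $E$ is $\mu$-semistable with $\mu\geq 0$, any nonzero homomorphism from $E$ to a line bundle of negative degree would exhibit a quotient of $E$, necessarily of slope $\geq \mu\geq 0$, inside a rank-one sheaf of negative slope, which is absurd; by Serre duality this forces $H^2(E(-1))=H^2(E(-2))=0$. Hence once we establish $H^1(E(-1))=0$, the sheaf $E$ is $0$-regular, so it is globally generated, and regularity propagation ($0$-regular $\Rightarrow 1$-regular) gives $H^1(E)=0$ as well. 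In particular this \emph{automatically} upgrades global generation in codimension $1$ to honest global generation in part (2), so that the codimension-$1$ hypothesis there will be used only to drive the induction.

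To attack $H^1(E(-1))=0$ I would restrict to a general line $L$. By Grauert--Mülich the splitting type $E|_L=\bigoplus_i \OO_L(a_i)$ is balanced with all $a_i\geq \lfloor\mu\rfloor\geq 0$; consequently $E|_L$ is globally generated, $H^1(E|_L)=0$, and $h^0(E|_L)=r+c_1$. Feeding this together with the $H^2$-vanishings into the long exact sequence of $0\to E(-1)\to E\to E|_L\to 0$ produces a key identity comparing $E$ with its twist. Writing $\delta'$ for the deficiency of $E(-1)$ (of slope $\mu-1$ and parameter $\alpha-1$, for $\alpha\geq 2$), and using Lemma~\ref{lem-obvious_beta_facts}(3), one obtains $\delta'\leq \delta$ together with
\[
h^1(E)=h^1(E(-1))+\delta'-\delta,
\]
where $\delta-\delta'$ is precisely the corank of the restriction map $H^0(E)\to H^0(E|_L)$. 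Thus deficiency is non-increasing under twisting down, and $H^1(E(-1))=0$ would in turn follow from the $0$-regularity of $E(-1)$ by the same propagation.

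This motivates an induction on $\alpha$: show that $E$ is $0$-regular by showing $E(-1)$ is $0$-regular. The bookkeeping is dictated entirely by where $\delta'$ sits relative to the parameter $\alpha-1$ of $E(-1)$. When $\delta'\leq\alpha-2$, the inductive hypothesis in the form of part (1) applies to $E(-1)$ directly. The delicate situations are the boundary cases: $\delta'=\alpha-1$, where part (2) must be invoked for $E(-1)$ and so one must show $E(-1)$ is again globally generated in codimension $1$; and, in part (2) for $E$, the a priori possibility $\delta'=\delta=\alpha$, in which \emph{neither} inductive case applies to $E(-1)$. Here the hypothesis that $E$ is globally generated in codimension $1$ must be exploited, via the cokernel interpretation of $\delta-\delta'$, to force a strict drop $\delta'\leq\alpha-1$ and to show the codimension-$1$ global generation is inherited by $E(-1)$.

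I expect the genuine obstacles to be twofold. First, the base case $\alpha=1$ (so $0\leq\mu<1$ and necessarily $\delta=0$) cannot be reduced further, because the would-be predecessor $E(-1)$ has negative slope and no sections; it must be treated directly. The natural route is to rule out failure of generic global generation through Theorem~\ref{thm-sectionBound} -- the image of the evaluation map would have strictly smaller rank yet the same $h^0$, contradicting the bound -- and then to analyze the evaluation sequence by induction on the rank. Second, and relatedly, the images and kernels produced by the evaluation map and by peeling off sections need not be semistable, so the induction almost certainly must be carried out for the broader class of torsion-free sheaves whose Harder--Narasimhan slopes lie in $(\alpha-2,\alpha)$, exactly the setting of Corollary~\ref{cor-sectionBound}. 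Keeping the deficiency bound and the codimension-$1$ global generation stable under twisting and under passage to these sub- and quotient sheaves is where the bulk of the work will lie.
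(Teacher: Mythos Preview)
Your reduction to $0$-regularity is too optimistic: in part (2) the vanishing $H^1(E(-1))=0$ can genuinely fail, so it cannot be the target of the argument. Already the base case $\alpha=1$, $\delta=1$ shows this. Take any $\mu$-semistable $E$ with $0\leq\mu<1$, globally generated in codimension $1$, with $h^0(E)=\beta-1=r+c_1-1$ and $h^1(E)=0$ (such sheaves exist). Then $\chi(E(-1))=\chi(E)-(r+c_1)=-1$, and since $\mu(E(-1))<0$ forces $h^0(E(-1))=0$, we get $h^1(E(-1))=1$. So $E$ is globally generated and nonspecial but \emph{not} $0$-regular, and your induction cannot even be launched. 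More generally, the identity you derived can be rewritten as $\kappa:=h^1(E(-1))-h^1(E)=\delta-\delta'$, and in all the cases that survive the easy reductions one has $\kappa\in\{0,1\}$. It is precisely $\kappa=1$ that blocks the descent. Your proposed fix---using codimension-$1$ global generation of $E$ to force a strict drop $\delta'<\delta$---is aiming at the wrong inequality: $\delta'<\delta$ is equivalent to $\kappa>0$, which is the bad case, not the good one.

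What the paper does instead is to attack $h^1(E)$ directly rather than $h^1(E(-1))$. The key observation is that codimension-$1$ global generation gives $H^1(E|_L)=0$ for \emph{every} line $L$, not just a general one, so the surjections $H^1(E(-1))\to H^1(E)$ assemble into a surjection of vector bundles
\[
\OO_{\P^{2*}}(-1)\otimes H^1(E(-1))\;\twoheadrightarrow\;\OO_{\P^{2*}}\otimes H^1(E)
\]
on the dual plane. Writing $a=h^1(E)$ and using $\kappa\in\{0,1\}$, this reads $\OO_{\P^{2*}}(-1)^{a+\kappa}\twoheadrightarrow\OO_{\P^{2*}}^{a}$, and an Euler-characteristic count on $\P^{2*}$ forces $a=0$. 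This dual-plane trick is the missing ingredient; once $h^1(E)=0$ is established, global generation is then read off from the Beilinson spectral sequence, again using only $h^1(E(-1))\leq 1$ rather than $=0$. Your Grauert--M\"ulich step, by contrast, only controls a general line and so cannot produce the bundle map on $\P^{2*}$ needed for this argument.
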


The first part of the theorem has immediate implications for the study of  Brill-Noether loci.

\begin{corollary}\label{cor-BNsmall}
Consider a nonempty moduli space $M = M(r,\mu,\chi)$ of Gieseker semistable sheaves with rank $r\geq 1$, slope $\mu \geq 0$, and Euler characteristic $\chi$.  If $0\leq \delta < \alpha$, then $B^{\beta - \delta}$ is nonempty if and only if $\chi \geq \beta - \delta$.  In this case $B^\chi = M$ and $B^{\chi+1}$ is empty.
\end{corollary}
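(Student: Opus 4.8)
The plan is to reduce the entire statement to the vanishing in Theorem~\ref{thm-h1}(1), after recording two facts that hold for \emph{every} $E\in M=M(r,\mu,\chi)$. First, since $E$ is Gieseker semistable it is $\mu$-semistable, so $\mu_{\min}(E)=\mu\geq 0$; by Serre duality $h^2(E)=\dim\Hom(E,\OO_{\P^2}(-3))$, and a nonzero homomorphism $E\to\OO_{\P^2}(-3)$ would have image a quotient of $E$ --- hence of slope $\geq\mu_{\min}(E)=\mu\geq 0$ --- that is contained in $\OO_{\P^2}(-3)$ and thus of slope $\leq -3$, a contradiction. Hence $h^2(E)=0$ and $\chi=h^0(E)-h^1(E)$ for every $E\in M$. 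Second, Corollary~\ref{cor-sectionBound} applies (a $\mu$-semistable sheaf of slope $\mu$ has $\alpha-2\leq\mu_{\min}=\mu_{\max}<\alpha$) and gives $h^0(E)\leq\beta$. Writing $\delta_E:=\beta-h^0(E)\geq 0$ for the deficiency of an individual sheaf, these say $h^0(E)=\chi+h^1(E)\geq\chi$ and $\delta_E=(\beta-\chi)-h^1(E)$.

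The engine of the argument is the following immediate consequence of Theorem~\ref{thm-h1}(1): if $E\in M$ has $\delta_E<\alpha$, then $H^1(E)=0$, and therefore $h^0(E)=\chi$. I would first deduce the nonemptiness equivalence from this. If $B^{\beta-\delta}\neq\emptyset$, then by definition there is a \emph{stable} $E\in M$ with $h^0(E)\geq\beta-\delta$, i.e. $\delta_E\leq\delta<\alpha$; the engine gives $\chi=h^0(E)\geq\beta-\delta$. Conversely, suppose $\chi\geq\beta-\delta$. I claim every stable $E\in M$ satisfies $h^1(E)=0$: otherwise $h^0(E)=\chi+h^1(E)>\chi\geq\beta-\delta$, so $\delta_E=\beta-h^0(E)<\delta<\alpha$, and the engine forces $h^1(E)=0$, a contradiction. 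Thus every stable $E$ has $h^0(E)=\chi\geq\beta-\delta$, so the locus of stable sheaves with at least $\beta-\delta$ sections is the entire stable locus $M^s$; taking closure gives $B^{\beta-\delta}=\overline{M^s}=M\neq\emptyset$.

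The final two assertions fall out of the converse computation above. Under $\chi\geq\beta-\delta$ we showed $h^0(E)=\chi$ for every stable $E$, so the locus of stable sheaves with at least $\chi$ sections is all of $M^s$ and its closure is $M$, giving $B^\chi=M$; while no stable sheaf has $h^0\geq\chi+1$, so $B^{\chi+1}=\emptyset$. The real work has already been done in Theorem~\ref{thm-h1}, so I expect no serious obstacle here; the only points needing care are the $h^2$-vanishing (handled above via $\mu_{\min}(E)>-3$) and the passage $\overline{M^s}=M$. The latter is immediate when $\dim M>0$, since $M^s$ is then a nonempty open subset of the irreducible variety $M$, and in the remaining case $\dim M=0$ the space $M$ consists of a single stable (exceptional) bundle, so $M^s=M$.
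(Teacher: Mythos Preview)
Your proof is correct and is exactly the natural way to make the paper's ``immediate implications'' precise: the paper does not spell out an argument for this corollary, but the intended deduction from Theorem~\ref{thm-h1}(1) is the one you give. Your handling of the auxiliary points ($h^2$-vanishing via $\mu$-semistability and Serre duality, and $\overline{M^s}=M$ via irreducibility of $M$ together with the fact that zero-dimensional moduli spaces on $\P^2$ are single exceptional bundles) is also correct and matches the facts recorded in \S\ref{sec-prelim}.
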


Essentially, Brill-Noether loci $B^{\beta-\delta}$ don't become interesting until $\delta$ is at least $\alpha$.  The proof of the theorem will follow from a series of smaller results, and will occupy the rest of the section.

\subsection{Global generation}
In this subsection, we give some preliminary results on the global generation of sheaves with small deficiency.

\begin{lemma}\label{lem-ggg}
Let $E$ be a $\mu$-semistable sheaf with rank $r$, slope $\mu\geq0$, and deficiency less than ${\alpha+1\choose 2}$.  That is, $$h^0(E) > \beta - {\alpha+1\choose 2}.$$  Then $E$ is generically globally generated.
\end{lemma}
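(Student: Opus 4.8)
The plan is to argue by contradiction, mirroring Case 2 in the proof of Theorem \ref{thm-sectionBound}. Suppose $E$ is \emph{not} generically globally generated. Consider the canonical evaluation map $H^0(E)\te\OO_{\P^2}\to E$ and let $F\subset E$ be its image. Then $F$ is torsion-free, globally generated, and $h^0(F)=h^0(E)$, while the failure of generic global generation forces $\rk(F)=r'<r$. Since $F\subset E$ and $E$ is $\mu$-semistable, we have $\mumax(F)\leq \mu$. The idea is that dropping the rank by at least one costs enough sections that $h^0(F)$ can no longer exceed $\beta-\binom{\alpha+1}{2}$, contradicting the hypothesis.

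To make this precise, set $\alpha'=\lfloor\mumax(F)\rfloor+1\leq\alpha$ and apply Theorem \ref{thm-sectionBound} to $F$, giving $h^0(F)\leq r'\alpha'\bigl(\mumax(F)-\tfrac{1}{2}\alpha'+\tfrac{3}{2}\bigr)$. I would split into two cases. When $\alpha'=\alpha$, using $r'\leq r-1$ and $\mumax(F)\leq\mu$ yields
$$h^0(F)\leq (r-1)\alpha\Bigl(\mu-\tfrac{1}{2}\alpha+\tfrac{3}{2}\Bigr)=\beta-\alpha\Bigl(\mu-\tfrac{1}{2}\alpha+\tfrac{3}{2}\Bigr),$$
and since $\mu\geq\alpha-1$ the subtracted term is at least $\alpha(\tfrac{\alpha}{2}+\tfrac{1}{2})=\binom{\alpha+1}{2}$, so $h^0(F)\leq\beta-\binom{\alpha+1}{2}$. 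When $\alpha'<\alpha$, I would reuse the estimate from Case 2 of Theorem \ref{thm-sectionBound}, namely $h^0(F)<r'\bigl(\binom{\alpha'+2}{2}-1\bigr)\leq (r-1)\binom{\alpha+1}{2}$; combined with the fact (from Lemma \ref{lem-obvious_beta_facts}, since $\mu\geq\alpha-1$) that $\beta\geq\beta_{r,\alpha-1}=r\binom{\alpha+1}{2}$, this again gives $h^0(F)<\beta-\binom{\alpha+1}{2}$. Either way $h^0(E)=h^0(F)\leq\beta-\binom{\alpha+1}{2}$, contradicting the assumption.

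The role of $\mu$-semistability is precisely to guarantee $\mumax(E)=\mu$, so that the maximal destabilizing subsheaf of $F$ (a subsheaf of $E$) satisfies $\mumax(F)\leq\mu$; this is what controls $\alpha'$ and keeps the $\alpha'=\alpha$ estimate sharp. The threshold $\binom{\alpha+1}{2}$ is natural because it is exactly the section deficit $\beta_{r,\mu}-\beta_{r-1,\mu}$ at the extreme slope $\mu=\alpha-1$, i.e.\ the cost of losing one unit of rank. I expect the only genuinely delicate point to be bookkeeping the two cases so that the inequalities combine to a clean contradiction — in particular verifying that the $\alpha'<\alpha$ branch, where $F$ could a priori have small rank but a large $h^0$ relative to its own slope, is still dominated by $\beta-\binom{\alpha+1}{2}$; this is where invoking the precise Case 2 bound from Theorem \ref{thm-sectionBound} rather than a cruder estimate is essential.
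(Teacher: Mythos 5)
Your argument is correct and follows essentially the same route as the paper: pass to the image $F$ of the evaluation map, note $h^0(F)=h^0(E)$ with $r'<r$ and $\mumax(F)\leq\mu$, and show $\beta_{r',\mumax(F)}\leq\beta_{r,\mu}-{\alpha+1\choose 2}$ by splitting on whether $\alpha'=\alpha$ or $\alpha'<\alpha$. The only (immaterial) difference is in the subcase $\alpha'<\alpha$, where the paper bounds $\beta_{r',\mu'}<\beta_{r',\alpha-1}$ by monotonicity and then reuses the $\alpha'=\alpha$ estimate, while you invoke the binomial bound $r'\bigl({\alpha'+2\choose 2}-1\bigr)$ directly together with $\beta\geq r{\alpha+1\choose 2}$; both close the case identically.
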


Examples like $E = \OO(1)\oplus I_{p_1,p_2,p_3}(1)$ for non-collinear points $p_1,p_2,p_3$ show that this result is sharp.

\begin{proof}
Assume $E$ is not generically globally generated.  We let $F\subset E$ be the image of the canonical evaluation mapping $H^0(E) \te \OO_{\P^2}\to E$, so that $h^0(F) = h^0(E)$.  Let $r'$ be the rank of $F$, so that $r'<r$ since $E$ is not generically globally generated.  We let $\mu' = \mumax(F)$, noting that $F$ need not be semistable. We have $\mu'\leq \mu$ by the $\mu$-semistability of $E$.   Let $\alpha'=\alpha_{\mu'}$ and $\beta' = \beta_{r',\mu'}$.

First we discuss the case where $\alpha' = \alpha$.
The main idea of the proof is to show that $$\beta' \leq \beta-{\alpha+1\choose 2},$$ since then by Theorem \ref{thm-sectionBound} $$h^0(F) \leq \beta' \leq \beta - {\alpha+1\choose 2} < h^0(E),$$ and therefore $h^0(F) < h^0(E)$, a contradiction.  We compute \begin{align*}\beta - \beta'&= r\alpha\left(\mu-\frac{1}{2}\alpha+\frac{3}{2}\right)-r'\alpha\left(\mu'-\frac{1}{2}\alpha+\frac{3}{2}\right)\\
 &\geq r\alpha\left(\mu-\frac{1}{2}\alpha+\frac{3}{2}\right) - (r-1)\alpha\left(\mu-\frac{1}{2}\alpha+\frac{3}{2}\right) \\ &= \alpha \left( \mu - \frac{1}{2}\alpha + \frac{3}{2}\right)\\
& \geq \alpha \left(\alpha-1 -\frac{1}{2}\alpha + \frac{3}{2}\right)\\
&= {\alpha+1\choose 2}.
\end{align*}

What we have actually shown is that if $r'<r$, $\mu'\leq\mu$, and $\alpha' = \alpha$, then $$\beta_{r',\mu'} \leq \beta_{r,\mu} -{\alpha+1\choose 2}.$$
Now if instead $\alpha'<\alpha$, we have $\mu'<\alpha'\leq\alpha-1 \leq \mu$ so we can estimate $$\beta_{r',\mu'} < \beta_{r',\alpha-1}\leq
\beta_{r,\mu}-{\alpha+1\choose 2}$$ using Lemma \ref{lem-obvious_beta_facts} (2). This completes the proof.
\end{proof}

\begin{remark}\label{rem-ggg}
By analyzing the proof of Lemma \ref{lem-ggg}, we can get further information in the case where $h^0(E) = \beta - {\alpha+1 \choose 2}$ and $E$ is not generically globally generated.  In the notation of the proof, we must have $\alpha' = \alpha$, $r' = r-1$, $\mu' = \mu$, and $\mu = \alpha-1$.
\end{remark}

If the deficiency is smaller, we get stronger results on global generation.

\begin{lemma}\label{lem-ggcodim1}
Let $E$ be a $\mu$-semistable sheaf with rank $r$ and slope $\mu\geq 0$.
\begin{enumerate}
\item If $E$ has deficiency less than $\alpha$, then $E$ is globally generated in codimension $1$.
\item If $\mu>0$, $E$ has deficiency $\alpha$, and $E$ is not globally generated in codimension $1$, then the locus where $E$ fails to be globally generated contains a unique curve.  It is a line.
\end{enumerate}
\end{lemma}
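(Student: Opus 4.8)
The plan is to prove (1) by induction on $\alpha$, comparing $E$ with its twist $E(-1)$ via restriction to a general line, and then to deduce (2) by applying (1) to $E(-1)$. Throughout, the \emph{failure locus} of a sheaf means the support of the cokernel of its evaluation map, so that being globally generated in codimension $1$ means this support has dimension $\le 0$.

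The bookkeeping device is the following. Fix a general line $L$. Since $E$ is $\mu$-semistable with $\mu\ge 0$, its generic splitting type is balanced (Grauert--M\"ulich), so $E|_L=\bigoplus_i\OO_L(a_i)$ with every $a_i\ge\lfloor\mu\rfloor\ge 0$; hence $h^1(E|_L)=0$ and $h^0(E|_L)=r+c_1(E)$. Writing $V=\mathrm{im}\big(H^0(E)\to H^0(E|_L)\big)$, the sequence $0\to E(-1)\to E\to E|_L\to 0$ gives $\dim V=h^0(E)-h^0(E(-1))$. Letting $\delta'$ be the deficiency of $E(-1)$ (which is $\ge 0$ by Corollary \ref{cor-sectionBound}, as $E(-1)$ is $\mu$-semistable of slope $\mu-1\in[\alpha-2,\alpha-1)$), a direct computation using Lemma \ref{lem-obvious_beta_facts}(3) and $r+c_1(E)=r(\mu+1)$ collapses to the clean identity $\codim_{H^0(E|_L)}V=\delta-\delta'$. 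I will use two further inputs. First, when $E$ is generically globally generated—which holds since $\delta<\binom{\alpha+1}{2}$, by Lemma \ref{lem-ggg}—its evaluation image $F\subseteq E$ has full rank $r$ and is globally generated, so $F|_L$ is globally generated on $L$ and $V\subseteq H^0(F|_L)$, whence $\dim V\le r+c_1(F)$ and
$$\codim_{H^0(E|_L)}V\ \ge\ c_1(E)-c_1(F)=c_1(Q)=:k,$$
where $Q=\coker(\ev)$ and $k\ge 0$ is exactly the degree of the one-dimensional part of the failure locus; thus $E$ is globally generated in codimension $1$ iff $k=0$. Second (the key monotonicity), multiplying sections of $E(-1)$ by linear forms shows $E$ is globally generated wherever $E(-1)$ is, so the failure locus of $E$ is contained in that of $E(-1)$.

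For (1) I induct on $\alpha$. When $\alpha=1$, $E(-1)$ has negative slope and hence no sections, so $\delta'=0$ and $\codim V=\delta$ is a nonnegative integer $<1$, forcing $k\le\codim V=0$. For $\alpha\ge 2$: if $\codim V=0$ then $k=0$ and we are done; otherwise $\codim V\ge1$, so $\delta'\le\delta-1<\alpha-1$, and the inductive hypothesis applied to the $\mu$-semistable sheaf $E(-1)$ (slope $\mu-1\ge\alpha-2\ge0$) shows $E(-1)$ is globally generated in codimension $1$; by the monotonicity above, so is $E$. This proves (1).

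For (2), assume $\mu>0$, $\delta=\alpha$, and $E$ not globally generated in codimension $1$, so $k\ge1$; as above $E$ is generically globally generated, using Remark \ref{rem-ggg} together with $\mu>0$ to rule out the boundary case when $\alpha=1$. If $\alpha=1$ then $E(-1)$ has no sections, $\delta'=0$, and $\codim V=\delta=1$. If $\alpha\ge2$ and $\codim V\ge2$, then $\delta'\le\alpha-2<\alpha-1$, so part (1) makes $E(-1)$ globally generated in codimension $1$, contradicting that its failure locus contains the one-dimensional failure locus of $E$; hence $\codim V=1$. In either case $1\le k\le\codim V=1$, so $c_1(Q)=1$. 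A degree-one effective one-cycle on $\P^2$ is a single reduced line, so the unique curve in the failure locus is a line. The hard part is extracting the exact identity $\codim V=\delta-\delta'$, which converts "losing sections on $\P^2$" into a codimension on a single line and lets the induction on $\alpha$ close, together with the monotonicity $\mathrm{failure}(E)\subseteq\mathrm{failure}(E(-1))$; once these are in hand the numerics are forced.
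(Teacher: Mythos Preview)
Your proof is correct, but it takes a substantially different route from the paper's. The paper argues directly, with no induction: assuming $E$ is not globally generated in codimension $1$, it lets $F$ be the image of evaluation, so there is an exact sequence $0\to F\to E\to T\to 0$ with $T$ torsion of $c_1(T)\ge 1$; setting $\mu'=\mumax(F)$ and $\beta'=\beta_{r,\mu'}$ it simply computes $\beta-\beta'=r\alpha(\mu-\mu')\ge \alpha$ (when $\alpha'=\alpha$), and then $h^0(E)=h^0(F)\le\beta'\le\beta-\alpha$ gives both (1) and (2) in one stroke. By contrast, you induct on $\alpha$, restrict to a general line, and use two structural ingredients: the identity $\codim_{H^0(E|_L)}V=\delta-\delta'$ and the monotonicity $\mathrm{failure}(E)\subseteq\mathrm{failure}(E(-1))$. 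You also invoke Grauert--M\"ulich to get $h^1(E|_L)=0$, which the paper does not need here.

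The paper's argument is considerably shorter and uses only Theorem \ref{thm-sectionBound} and the elementary $\beta$-calculus. Your approach, while longer, is closer in spirit to the later Proposition \ref{prop-h1}, which also proceeds by induction on $\alpha$ via the restriction sequence; the identity $\codim V=\delta-\delta'$ and the failure-locus monotonicity are reusable observations that could streamline other arguments. One minor imprecision: you write ``$V\subseteq H^0(F|_L)$'', but $F|_L\to E|_L$ need not be injective; what you actually use (and what holds) is $\dim V\le h^0(F|_L)$, which follows since $V$ lies in the image of $H^0(F|_L)\to H^0(E|_L)$.
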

\begin{proof}
Since $\alpha \leq {\alpha+1\choose 2}$ with equality for $\alpha=1$, Lemma \ref{lem-ggg} shows that $E$ is generically globally generated unless perhaps if we are in case (2) and $\alpha =1$.  However, in that case, Remark \ref{rem-ggg} shows that $\mu=0$, contradicting our assumption.  Therefore, $E$ is generically globally generated in either case.

As in the proof of Lemma \ref{lem-ggg}, we assume that $E$ is not globally generated in codimension $1$ and let $F\subset E$ be the image of the canonical evaluation.  We have an exact sequence $$0\to F\to E\to T\to 0$$ where $T$ is a torsion sheaf with $1$-dimensional support.  Then $r(F) = r(E)$ and $c_1(F) < c_1(E)$ since $T$ has $1$-dimensional support.  We also have $h^0(F)=h^0(E)$.  We let $\mu'$, $\alpha'$ and $\beta'$ be as in the previous proof.
If $\alpha'=\alpha$, then we can compute
$$
\beta-\beta' = r\alpha\left(\mu-\frac{1}{2}\alpha+\frac{3}{2}\right)-r\alpha\left(\mu'-\frac{1}{2}\alpha+\frac{3}{2}\right)= r\alpha(\mu-\mu')\geq \alpha.
$$If instead $\alpha'<\alpha$, then we can deduce the stronger inequality $\beta-\beta' > \alpha$  by arguing as in the final paragraph of the proof of Lemma \ref{lem-ggg}.

(1) Now suppose $E$ has deficiency less than $\alpha$, so $h^0(E) > \beta-\alpha.$  Then $h^0(F)\leq\beta' \leq \beta-\alpha <h^0(E)$, contradicting $h^0(F)=h^0(E)$.  Therefore $E$ is globally generated in codimension $1$.
 
 (2) If instead $h^0(E) = \beta-\alpha$, then we conclude that $h^0(F) \leq \beta' \leq \beta - \alpha =h^0(E)$, and all these inequalities are equalities.  But then $\beta-\beta'=\alpha$ forces $\alpha=\alpha'$ and $r(\mu-\mu')=1$, so $c_1(T) = 1$.  
\end{proof}

\begin{remark}
The lemma is sharp.  For example, consider collinear points $p_1,\ldots,p_n$ and the sheaf $E = I_{p_1,\ldots,p_n}(n-1)$.  Then $E$ has deficiency $\alpha = n$ and it is not globally generated in codimension $1$. 
\end{remark}

\subsection{Nonspeciality}

We next show that vector bundles with small deficiency must have vanishing first cohomology.  The proof is motivated by the results in \cite{LePotierAmple}.   

\begin{proposition}\label{prop-h1}
Let $E$ be a $\mu$-semistable vector bundle on $\P^2$ of rank $r$ and slope $\mu\geq 0$.  Suppose that $E$ is globally generated in codimension $1$ and that $h^0(E) \geq \beta - \alpha$.  Then $H^1(E) = 0$. 
 \end{proposition}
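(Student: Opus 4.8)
The plan is to reduce the vanishing of $H^1(E)$ to a vanishing statement for the syzygy bundle of $E$, and to verify the latter by restriction to a general line, feeding in the near-maximal section count through Theorem~\ref{thm-sectionBound}. First, $H^2(E)=0$: by Serre duality $H^2(E)\cong H^0(E^\vee(-3))^*$, and since $E$ is $\mu$-semistable with $\mu\geq 0$, the sheaf $E^\vee(-3)$ has $\mumax=-\mu-3<0$ and hence no sections. Next, since $E$ is globally generated in codimension $1$, let $F\subseteq E$ be the image of $\ev$, so that $E/F=T$ is supported in dimension $\leq 0$, $h^0(F)=h^0(E)=:N$, and $M:=\ker(\OO_{\P^2}^N\to F)$ is reflexive, hence a vector bundle. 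Chasing $0\to M\to \OO_{\P^2}^N\to F\to 0$ (with $H^1(\OO_{\P^2}^N)=H^2(\OO_{\P^2}^N)=0$) gives $H^1(F)\cong H^2(M)$, and $0\to F\to E\to T\to 0$ (with $H^1(T)=0$) gives a surjection $H^1(F)\twoheadrightarrow H^1(E)$. Thus it suffices to show $H^2(M)=0$, or equivalently, by Serre duality, $H^0(M^\vee(-3))=0$.

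To prove $H^0(M^\vee(-3))=0$ I would restrict to a general line $L$. For general $L$ the sequence $0\to M|_L\to \OO_L^N\to F|_L\to 0$ remains exact, $M^\vee|_L\cong (M|_L)^\vee$, and $F|_L=E|_L$ is globally generated, so $E|_L\cong\bigoplus_i\OO_L(a_i)$ with all $a_i\geq 0$. A nonzero section of $M^\vee(-3)$ restricts to a nonzero section on a general $L$, so it is enough to prove $H^0((M|_L)^\vee(-3))=0$, i.e. that $M|_L$ has no summand $\OO_L(-b)$ with $b\geq 3$. The sections of $E$ vanishing on $L$ are exactly $H^0(E(-1))$ and contribute a harmless trivial summand $\OO_L^{\,h^0(E(-1))}$ to $M|_L$; the remaining part is the syzygy bundle of $E|_L$ with respect to $V$, the image of the restriction $H^0(E)\to H^0(E|_L)$.

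The hypotheses enter through $V$. Its codimension in $H^0(E|_L)$ is controlled because $\ker\big(H^0(E)\to H^0(E|_L)\big)=H^0(E(-1))$, so $\dim V=h^0(E)-h^0(E(-1))$. Applying Theorem~\ref{thm-sectionBound} to $E(-1)$ (which has $\mumax=\mu-1\geq -1$ and $\alpha_{\mu-1}=\alpha-1$) together with Lemma~\ref{lem-obvious_beta_facts}(3) gives $h^0(E(-1))\leq\beta_{r,\mu-1}=\beta-r(\mu+1)$; since $h^0(E|_L)=r(\mu+1)$ and $h^0(E)\geq\beta-\alpha$, we obtain $\dim V\geq h^0(E|_L)-\alpha$, i.e. $V$ has codimension at most $\alpha$ in $H^0(E|_L)$. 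By Serre duality on $L$, the desired vanishing $H^0((M|_L)^\vee(-3))=0$ is equivalent to the surjectivity of the multiplication map $V\otimes H^0(\OO_L(1))\to H^0(E|_L(1))$.

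The main obstacle is this final step: showing that a subspace $V\subseteq H^0(E|_L)$ of codimension at most $\alpha$ which still globally generates $E|_L=\bigoplus_i\OO_L(a_i)$ produces no syzygy summand of degree $\leq -3$, equivalently that the above multiplication map is surjective. In the deficiency-zero case $V=H^0(E|_L)$ the syzygies are all $\OO_L(-1)$ and the claim is immediate, but dropping up to $\alpha$ sections can a priori create more negative syzygies, and an adversarial concentration of the dropped sections on a single summand $\OO_L(a_i)$ with small $a_i$ would violate the bound. The key point to exploit is that $V$ is \emph{not} arbitrary: it is the image of the fixed space $H^0(E)$ under restriction to a \emph{general} line, so both the splitting type of $E|_L$ and the position of $V$ are generic. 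I expect to control the syzygies by combining this genericity with the constraint that the deficiency is at most $\alpha$, ruling out the pathological concentration; semicontinuity in $L$ and the genericity of the splitting type of $E|_L$ should make the estimate uniform.
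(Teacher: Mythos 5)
Your reductions up to the last step are sound: $H^2(E)=0$ by Serre duality and semistability, $H^1(E)$ is a quotient of $H^1(F)\cong H^2(M)\cong H^0(M^\vee(-3))^*$, and for a general line $L$ the vanishing of $H^0(M^\vee(-3))$ would follow from the surjectivity of the multiplication map $V\otimes H^0(\OO_L(1))\to H^0(E|_L(1))$, where $V=\mathrm{im}\bigl(H^0(E)\to H^0(E|_L)\bigr)$ has codimension at most $\alpha$. But that surjectivity is precisely the content of the proposition, and you have not proved it; you have only flagged it as ``the main obstacle'' and expressed the hope that genericity of $L$ rules out adversarial configurations. The statement is genuinely false for an arbitrary subspace $V$ of codimension $\alpha$: take $E|_L\cong\OO_L(1)^{\oplus 2}$ (so $\alpha=2$, $h^0(E|_L)=4$) and let $V$ be two sections supported in the first summand; then the multiplication map lands in $H^0(\OO_L(2))\oplus 0$ and misses half the target. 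So the entire burden of the proof rests on showing that the specific subspace $V_L$ cut out by restricting $H^0(E)$ to a general line cannot concentrate in this way, and no mechanism for this is supplied. Moreover, working on a single general line discards the information that seems essential here: for fixed $E$ the obstruction $h^1(E)$ is constant while $V_L$ varies with $L$, and there is no evident way to convert genericity of one $L$ into the needed surjectivity.

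The paper closes exactly this gap by a different device: it inducts on $\alpha$ and uses \emph{all} lines simultaneously. Since $H^1(E|_L)=0$ for every $L$ (global generation in codimension $1$), the connecting maps $H^1(E(-1))\to H^1(E)$ globalize to a surjection of vector bundles $\OO_{\P^{2*}}(-1)^{\,a+\kappa}\to\OO_{\P^{2*}}^{\,a}$ on the dual plane, where $a=h^1(E)$ and $\kappa=h^1(E(-1))-h^1(E)$; the case $\kappa\geq 2$ is excluded by applying the induction hypothesis to $E(-1)$ (via Lemma \ref{lem-ggcodim1}), and for $\kappa\leq 1$ an Euler characteristic computation on $\P^{2*}$ forces $a=0$. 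If you want to salvage your approach, you would need to replace the single-line restriction by the incidence correspondence over $\P^{2*}$ or find an independent proof of the multiplication-map surjectivity; as written, the key step is missing.
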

 Recall that Lemma \ref{lem-ggcodim1} (1) shows that the hypothesis that $E$ is globally generated in codimension $1$ is automatic if $h^0(E)>\beta-\alpha.$ 
\begin{proof}
The proof is by induction on $\alpha$, and we start with the  base case $\alpha = 1$.  Let $\delta$ be the deficiency of $E$, so that $\delta$ is either $0$ or $1$.  Let $L$ be any line and consider the restriction sequence $$0\to E(-1)\to E\to E|_L\to 0.$$ By semistability, we have $h^0(E(-1)) = 0$.  Also, since $E$ is globally generated in codimension $1$, we have $h^0(E|_L) = \beta$ and $h^1(E|_L) = 0$.  Then from the exact sequence $$0\to H^0(E) \to H^0(E|_L)\to H^1(E(-1))\to H^1(E) \to 0$$ we see that $h^1(E(-1))-h^1(E) = \delta$.  Since $H^1(E|_L) = 0$ for \emph{every} line $L$, the maps $H^1(E(-1))\to H^1(E)$ fit together to give a surjective map of vector bundles $$\OO_{\P^{2*}}(-1)\te H^1(E(-1))\to \OO_{\P^{2*}} \te H^1(E)\to 0$$ on the dual projective plane.   Letting $a = h^1(E)$ so $a+\delta = h^1(E(-1))$, this is a surjective map $$\OO_{\P^{2*}}(-1)^{a+\delta} \to \OO_{\P^{2*}}^a\to 0.$$ If $\delta=0$, the map is an isomorphism, which forces $a=0$.  If $\delta = 1$, the kernel would be the line bundle $\OO_{\P^{2*}}(-a-\delta)$ and we would have the exact sequence $$0\to \OO_{\P^{2*}}(-a-\delta) \to \OO_{\P^{2*}}(-1)^{a+\delta} \to \OO_{\P^{2*}}^a\to 0.$$ Computing Euler characteristics gives a contradiction unless $a=0$, since a line bundle on $\P^{2*}$ always has nonnegative Euler characteristic.   We conclude $h^1(E) = 0$ in either case.

Now suppose $\alpha > 1$ and $\delta \leq \alpha$.  We again consider the restriction sequence $$0\to E(-1)\to E \to E|_L\to 0.$$  We have $h^1(E|_L)=0$ since $E$ is globally generated in codimension $1$, so we get the exact sequence $$0\to H^0(E(-1))\to H^0(E)\to H^0(E|_L)\to H^1(E(-1))\to H^1(E)\to 0.$$  Let $\kappa = h^1(E(-1))-h^1(E)$.  Since $h^0(E)= \beta-\delta$ and $h^0(E|_L) = r(\mu+1)$, we compute $$h^0(E(-1)) = h^0(E)-h^0(E|_L)+\kappa = \beta-\delta-r(\mu+1)+\kappa = \beta_{r,\mu-1}-\delta+\kappa.$$
That is, $E(-1)$ has deficiency $\delta-\kappa$.  There are two cases to consider based on whether $\kappa \geq 2$ or $0\leq \kappa\leq 1$.

\emph{Case 1: $\kappa \geq 2$}.  This case actually never arises.  In this case, $E(-1)$ has deficiency less than $\alpha-1$.  By Lemma \ref{lem-ggcodim1}, $E(-1)$ is globally generated in codimension $1$.  By induction on $\alpha$, we find $h^1(E(-1))=0$, contradicting $\kappa \geq 2$.

\emph{Case 2: $0\leq \kappa \leq 1$.} This case is similar to the base case of the induction.  Let $a= h^1(E)$.  Then the maps $H^1(E(-1))\to H^1(E)$ fit together to give a surjection $$\OO_{\P^{2*}}(-1)^{a+\kappa}\to \OO_{\P^{2*}}^a\to 0.$$ As above, this forces $a=0$ and $h^1(E)=0$.
\end{proof}

The proposition allows us to prove a stronger statement about global generation. 

\begin{corollary}\label{cor-ggbundle}
Let $E$ be a $\mu$-semistable vector bundle on $\P^2$ of rank $r$ and slope $\mu\geq 0$.  Suppose that $E$ is globally generated in codimension $1$ and that $h^0(E) \geq \beta - \alpha$.  Then $E$ is actually globally generated.
\end{corollary}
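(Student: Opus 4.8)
The plan is to leverage Proposition \ref{prop-h1} together with the restriction sequence in the spirit of the inductive argument just carried out. Since $E$ is a vector bundle that is globally generated in codimension $1$, the locus where $E$ fails to be globally generated is $0$-dimensional. The goal is to rule out any such finite locus, so I would argue by contradiction and show that global generation fails at no point.

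First I would restrict to a general line $L$ through a hypothetical point $p$ where $E$ fails to be globally generated. Consider the restriction sequence $$0\to E(-1)\to E\to E|_L\to 0.$$ By Proposition \ref{prop-h1} we have $H^1(E)=0$, so the evaluation map $H^0(E)\to H^0(E|_L)$ is surjective. Since $E$ is $\mu$-semistable of slope $\mu\geq 0$ and globally generated in codimension $1$, the restriction $E|_L$ is a globally generated vector bundle on $L\cong\P^1$ with $h^1(E|_L)=0$, hence $E|_L$ is a direct sum of line bundles $\OO_L(d_i)$ with all $d_i\geq 0$ and is therefore globally generated as a sheaf on $L$. The key point is that global generation of $E$ at $p$ can be detected on a line: if $H^0(E)\to H^0(E|_L)$ is surjective and $E|_L$ is globally generated, then the composite $H^0(E)\to H^0(E|_L)\to (E|_L)_p = E_p\te k(p)$ is surjective, so $E$ is globally generated at $p$.

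More carefully, I would want to know that $E|_L$ genuinely has all nonnegative splitting degrees for a general (or every) line $L$. The hypothesis that $E$ is globally generated in codimension $1$ gives that for a general line $L$ missing the finite bad locus, $E|_L$ is globally generated, hence has nonnegative degrees; but one must produce a line through the putative bad point $p$ on which $E|_L$ remains globally generated. This is exactly where the surjectivity $H^0(E)\onto H^0(E|_L)$ coming from $H^1(E)=0$ does the work: the global sections of $E$ map onto those of $E|_L$ regardless of whether $L$ meets the bad locus, so as long as $E|_L$ itself is globally generated, sections of $E$ generate $E_p$.

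The main obstacle, and the step that requires the most care, is showing that $E|_L$ is globally generated for a line $L$ passing through $p$, rather than merely for a general line avoiding the bad locus. The clean way to handle this is to observe that $h^1(E|_L)=0$ for \emph{every} line $L$: indeed $h^1(E(-1))=h^1(E)=0$ would give this immediately, but more robustly one uses that $\chi(E|_L)=h^0(E|_L)$ once $E$ is generically globally generated along $L$, forcing all splitting degrees $d_i\geq -1$, and then rules out $d_i=-1$. I expect to invoke semistability of $E$ to bound $\mumin(E|_L)$ from below and the vanishing $H^1(E)=0$ from Proposition \ref{prop-h1} to upgrade generic global generation to honest global generation, concluding that the finite bad locus is empty and hence $E$ is globally generated.
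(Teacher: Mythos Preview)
Your argument has a genuine gap at its crucial step. From the restriction sequence
\[
0\to E(-1)\to E\to E|_L\to 0,
\]
the surjectivity of $H^0(E)\to H^0(E|_L)$ requires $H^1(E(-1))=0$, not $H^1(E)=0$. Proposition~\ref{prop-h1} gives you the latter, but its proof only shows that $\epsilon:=h^1(E(-1))\in\{0,1\}$. You acknowledge this later (``$h^1(E(-1))=h^1(E)=0$ would give this immediately''), but your earlier claim that Proposition~\ref{prop-h1} forces surjectivity is simply incorrect, and the remainder of the sketch does not repair it.

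When $\epsilon=1$, the image $V$ of $H^0(E)$ in $H^0(E|_L)$ has codimension exactly one for every line $L$. Even though $E|_L$ is globally generated (all splitting degrees are nonnegative, since sections of $E$ generate at all but finitely many points of $L$), a codimension-one subspace of $H^0(E|_L)$ need not generate at every point: for instance, take $E|_L\cong \OO_L\oplus \OO_L(1)$ and $V=k\oplus\langle x\rangle$; then $V$ generates away from the zero of $x$ but not at that point. So knowing $E|_L$ is globally generated is not enough---you need the full $H^0(E|_L)$ to come from $H^0(E)$, and you do not have that. Your final paragraph gestures at ruling out negative splitting degrees, but that is not the obstruction; the obstruction is the possible one-dimensional cokernel of the restriction map on sections.

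The paper bypasses this by computing the cokernel $Q$ of the evaluation map directly via the Beilinson spectral sequence for the exceptional collection $\OO(-2),\OO(-1),\OO$. One reads off that $Q$ is the cokernel of a map $\OO(-2)^\epsilon\to \OO(-1)^n$; since $Q$ is at most zero-dimensional and $\epsilon\leq 1$, the only possibility is $n=0$ and $Q=0$. This handles both values of $\epsilon$ uniformly, which is exactly where the line-restriction approach stalls.
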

\begin{proof}
We saw in the proof of Proposition \ref{prop-h1} that $\epsilon = h^1(E(-1))$ must be either $0$ or $1$.  We compute $E$ by using the Beilinson spectral sequence for $E$ corresponding to the exceptional collection  $\OO_{\P^2}(-2),\OO_{\P^2}(-1),\OO_{\P^2}$ with dual collection $\OO_{\P^2}(-1),T_{\P^2}(-2),\OO_{\P^2}$.  The $E_1^{p,q}$-page of this sequence looks like $$\xymatrix{
\OO(-2) \te H^1(E\te \OO(-1)) \ar[r] & \OO(-1) \te H^1(E\te T(-2))  \ar[r] & \OO \te H^1(E\te \OO)  \\
\OO(-2) \te H^0(E\te \OO(-1)) \ar[r] & \OO(-1) \te H^0(E\te T(-2))  \ar[r] & \OO \te H^0(E\te \OO) \\
}$$ and the sequence converges to $E$ in degree $0$.  Our known information shows that this page takes the shape $$\xymatrix{
\OO(-2)^\epsilon \ar[r] & \OO(-1) ^n  \ar[r] & 0 \\
\OO(-2)^l \ar[r] & \OO(-1)^m  \ar[r] & \OO^{\beta-\delta}  \\
}$$ for some integers $l,m,n$.  This shows that the cokernel $Q$ of the canonical evaluation $H^0(E)\te \OO_{\P^2}\to E$ is isomorphic to the cokernel of $\OO(-2)^\epsilon\to \OO(-1)^n$.  But $Q$ is at most zero-dimensional since $E$ is globally generated in codimension $1$.  Since $\epsilon$ is $0$ or $1$, this is only possible if $n=0$ and $Q=0$.
\end{proof}

Finally, we can prove analogous results for sheaves by resolving their singularities.

\begin{proposition}\label{prop-h1sheaf}
Let $E$ be a $\mu$-semistable \emph{sheaf} on $\P^2$ of rank $r$ and slope $\mu\geq 0$.  Suppose that $E$ is globally generated in codimension $1$ and that $h^0(E) \geq \beta-\alpha$.  Then $H^1(E)=0$ and $E$ is globally generated.
\end{proposition}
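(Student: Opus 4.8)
The plan is to resolve the singularities of $E$ by passing to its reflexive hull and reduce to the vector bundle case already settled in Proposition \ref{prop-h1} and Corollary \ref{cor-ggbundle}. Since $\P^2$ is a smooth surface, $E^{**}$ is locally free, and there is a canonical exact sequence $0\to E\to E^{**}\to Q\to 0$ in which $Q$ is concentrated on the finite set of points where $E$ fails to be locally free, so $\dim\Supp Q=0$. Because $Q$ has $0$-dimensional support, $E^{**}$ has the same rank $r$ and the same $c_1$, hence the same slope $\mu$, as $E$; and $E^{**}$ is again $\mu$-semistable, since the reflexive hull of a $\mu$-semistable sheaf is $\mu$-semistable.

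First I would check that $E^{**}$ inherits the hypotheses of the bundle results. Restriction of global sections gives $h^0(E^{**})\ge h^0(E)\ge \beta-\alpha$. For global generation in codimension one, the image of $H^0(E)\otimes\OO$ inside $E^{**}$ has cokernel that is an extension of $Q$ by $\coker(\ev_E)$; both are $0$-dimensional (the second because $E$ is globally generated in codimension one), so this cokernel is $0$-dimensional, and since $H^0(E)\subseteq H^0(E^{**})$ the cokernel of $\ev_{E^{**}}$ is a further quotient and is also $0$-dimensional. Thus $E^{**}$ is a $\mu$-semistable bundle of slope $\mu\ge 0$, globally generated in codimension one, with $h^0(E^{**})\ge\beta-\alpha$, and Proposition \ref{prop-h1} together with Corollary \ref{cor-ggbundle} yield $H^1(E^{**})=0$ and global generation of $E^{**}$.

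It then remains to descend these conclusions to $E$. Semistability with $\mu\ge 0>-3$ forces $\Hom(E,\OO(-3))=0$, so $H^2(E)=0$ by Serre duality; combined with $H^1(E^{**})=0$ and $H^1(Q)=0$, the long exact sequence of $0\to E\to E^{**}\to Q\to 0$ collapses to $H^0(E^{**})\xrightarrow{\psi}H^0(Q)\to H^1(E)\to 0$. Hence $H^1(E)=0$ is \emph{equivalent} to surjectivity of $\psi$. Once $\psi$ is surjective, $H^0(E)=\ker\psi$ has exactly the predicted codimension in $H^0(E^{**})$, and I would then promote the codimension-one global generation of $E$ to honest global generation.

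The hard part will be the surjectivity of $\psi$, i.e.\ lifting every section of the finite-length sheaf $Q$ to a section of $E^{**}$. This can fail for an arbitrary $0$-dimensional quotient of a globally generated bundle (for instance $\OO\to\OO/\mathfrak{m}_p^2$, whose kernel is not globally generated in codimension one), so it is exactly here that the hypothesis on $E$ is indispensable. I expect to argue locally at each $p\in\Supp Q$, using that $E$ is globally generated in codimension one together with the positivity supplied by $h^0(E^{**})\ge\beta-\alpha$ (via Corollary \ref{cor-sectionBound}), to show that $Q_p$ is a quotient of $E^{**}$ onto which the global sections of $E^{**}$ surject; the same positivity input should simultaneously upgrade the codimension-one global generation of $E$ to global generation.
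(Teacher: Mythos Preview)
Your overall strategy---pass to the reflexive hull and invoke the bundle case---matches the paper's, and your verification that $E^{**}$ inherits the hypotheses is correct.  But the proposal has a genuine gap exactly where you flag the ``hard part.''  You correctly observe that global generation of $E^{**}$ does \emph{not} force $H^0(E^{**})\to H^0(Q)$ to be surjective when $Q$ is nonreduced at a point (your example $\OO\to\OO/\mathfrak m_p^2$ is precisely this), and then you do not actually provide an argument.  ``Arguing locally at each $p$ using positivity'' is a hope, not a proof: the bound $h^0(E^{**})\le\beta$ only tells you $\ell(Q)-h^1(E)=h^0(E^{**})-h^0(E)\le\alpha$, which does not force $h^1(E)=0$.

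The paper closes this gap by inducting on $\ell(Q)$.  It chooses a length-one subsheaf $\OO_p\subset Q$ and forms the partial resolution $E'=\ker(E^{**}\to Q/\OO_p)$, so that $0\to E\to E'\to\OO_p\to 0$.  The point is that $E'$ again satisfies all the hypotheses (same $\mu$-semistability, same bounds, still globally generated in codimension~$1$), so by induction $E'$ is globally generated; and global generation of $E'$ \emph{does} give surjectivity onto the fiber at a single reduced point, i.e.\ $H^0(E')\twoheadrightarrow H^0(\OO_p)$, whence $h^1(E)=0$.  This step-by-step peeling is exactly what circumvents the fat-point obstruction you identified.  Your proposal for global generation of $E$ is likewise a placeholder: the paper handles it by showing $h^1(E(-1))\le 1$ (via $h^1(E'(-1))=0$, which needs a separate deficiency estimate for $E'(-1)$) and then rerunning the Beilinson spectral sequence argument from Corollary~\ref{cor-ggbundle}.
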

\begin{proof}
Since the result is known for vector bundles by Proposition \ref{prop-h1} and Corollary \ref{cor-ggbundle}, we assume that $E$ is not locally free and let $E^{**}$ be its double dual.  Then the cokernel $T$ of the inclusion $$0\to E\to E^{**}\to T\to 0$$ is zero-dimensional.  We induct on the length of $T$.

Since $T\neq 0$, it fits into an exact sequence $$0\to \OO_p\to T\xrightarrow{\psi} T'\to 0.$$  Let $E'$ be the kernel of the composition $E^{**}\to T\xrightarrow{\psi} T'$, which is a ``partial resolution'' of the singularities of $E$.  It is again $\mu$-semistable, and it has the same double dual $(E')^{**} = E^{**}$.  Using the snake lemma on the commutative diagram $$
\xymatrix{0\ar[r]&E\ar[r]\ar[d]_{\varphi} & E^{**} \ar[r]\ar@{=}[d]& T \ar[d]_{\psi} \ar[r]& 0\\
        0\ar[r]&E'\ar[r] & E^{**} \ar[r] & T'\ar[r]& 0
}
$$ with exact rows, we get an exact sequence $$0\to \ker\psi\to \coker\varphi\to 0.$$ This shows that $\coker\varphi\cong\OO_p$ and thus there is an exact sequence $$0\to E\xrightarrow{\varphi} E'\to \OO_p\to 0.$$  We have $h^0(E') \geq h^0(E)$, so the deficiency of $E'$ is at most the deficiency of $E$.  It also shows that since $E$ is globally generated in codimension $1$, so is $E'$. The exact sequence $$0\to E'\to E^{**}\to T'\to 0$$ then shows that our induction hypothesis applies to $E'$.  Therefore $h^1(E')=0$ and $E'$ is globally generated.  But then the map $H^0(E')\to H^0(\OO_p)$ is surjective, so $h^1(E)=0$.

It remains to prove $E$ is globally generated.  By the same argument as in the proof of Corollary \ref{cor-ggbundle}, it is enough to show that $h^1(E(-1))\leq 1$.  Taking cohomology of the exact sequence $$0\to E(-1)\to E'(-1)\to \OO_p\to 0,$$ we see that it is enough to show that $h^1(E'(-1))=0$.  For any line $L$, we have \begin{align*}
h^0(E'(-1))& \geq \chi(E'(-1)) \\&= 1+\chi(E(-1)) \\&= 1+\chi(E)-\chi(E|_L) \\&= 1+h^0(E)-r(\mu+1)
\\&\geq 1+ \beta_{r,\mu}-\alpha_\mu-r(\mu+1)\\
&= \beta_{r,\mu-1} -\alpha_{\mu-1}.
\end{align*}
Therefore, $E'(-1)$ has deficiency at most $\alpha_{\mu-1}$, and if equality holds, then $h^1(E'(-1))=0$ and we are done.  On the other hand, if the deficiency of $E'(-1)$ is less than $\alpha_{\mu-1}$, then $E'(-1)$ is globally generated in codimension $1$ by Lemma \ref{lem-ggcodim1} (1).  In this case our induction hypothesis shows $h^1(E'(-1))=0$.
\end{proof}

The proof of Theorem \ref{thm-h1} now follows from Proposition \ref{prop-h1sheaf} and Lemma \ref{lem-ggcodim1} (1).

\section{Steiner bundles}  \label{sec-steiner}

Our $h^1$ vanishing result allows us to classify the semistable maximal sheaves, i.e. the sheaves with $h^0(E)  = \beta$.  These turn out to always be twisted Steiner bundles.  For our work in the next section we will need to classify sheaves with $\beta$ sections under a slightly weaker hypothesis than stability as well.

\begin{definition} Any sheaf $E$ described as a cokernel of a map $$0\to \OO(-1)^{a}\fto{M} \OO^{a+r} \to E\to 0$$ is called a \emph{Steiner sheaf}.  The  sheaf $E$ has rank $r$ and slope $\mu = \frac{a}{r}$.  A \emph{twisted Steiner sheaf} is a sheaf of the form $$0\to \OO(k-1)^a\fto{M} \OO(k)^{a+r} \to E\to 0$$ with rank $r$ and slope $\mu = k+ \frac{a}{r}$.
\end{definition}  

\begin{remark}\label{rem-stabSteiner} Let us discuss the stability of a Steiner sheaf $E$ defined by a \emph{general} matrix of linear forms $M$, following results from \cite{Drezet} and \cite{Brambilla2}.  If $r\geq 2$, then $E$ defined by $$0\to \OO(-1)^a \fto{M} \OO^{a+r}\to E\to 0$$ is in fact a bundle.  If $$\mu =\frac{a}{r} > \varphi-1 \approx 0.618 \qquad\qquad \varphi = \frac{1+\sqrt{5}}2,$$ then $E$ is $\mu$-stable, and $E$ is a general bundle in a positive-dimensional moduli space.

If instead $\mu<\varphi-1$ and $E$ is stable, then it is a \emph{Fibonacci bundle.}   If we declare $F_{-1} = \OO(-1)$ and $F_0 = \OO$, then subsequent Fibonacci bundles are obtained via a mutation $$0\to F_{i-1} \to \Hom(F_{i-1},F_{i})^*\te F_{i}\to F_{i+1} \to 0.$$ For example, for $i=0$ we get the Euler sequence defining $F_1 = T_{\P^2}(-1)$.  Let $f_0 = 0$, $f_1 =1$, and $f_{k+2} = f_{k+1}+f_k$ be the Fibonacci sequence.  The Fibonacci bundles $F_i$ have invariants $$r(F_i) = f_{2i+1}\qquad c_1(F_i) = f_{2i} \qquad \mu(F_i) = \frac{f_{2i}}{f_{2i+1}} \qquad \chi(F_i) = \beta_{r(F_i),\mu(F_i)} \qquad \hom(F_{i-1},F_i) = 3.$$ The first several slopes of Fibonacci bundles are $$\frac{0}{1},\frac{1}{2},\frac{3}{5},\frac{8}{13},\frac{21}{34},\ldots$$ and these slopes converge to $\varphi-1$.   

Finally, if $M$ is general but $0\leq \mu <\varphi-1$, then there is a Fibonacci bundle $F_i$ such that $\mu(F_{i-1}) < \mu(E) \leq \mu(F_i)$.  Then there are integers $m\geq 0$ and $n>0$ (depending only on $a,r$) such that $E \cong F_{i-1}^{\oplus m} \oplus F_i^{\oplus n}$.
\end{remark}

\begin{proposition}\label{prop-Steiner}
Let $\alpha \geq 1$ and $0\leq a <r$ be integers.  Let $E$  be a twisted Steiner sheaf $$0\to \OO(\alpha-2)^{a}\fto{M} \OO(\alpha-1)^{a+r}\to E \to 0.$$ Then $E$ has slope $\mu = \alpha-1+\frac{a}{r}$.  We have $\alpha_\mu = \alpha$ and 
$h^0(E) = \chi(E)=\beta_{r,\mu}$ and $h^1(E) = 0$.  

If $r\geq2$ and $M$ is general, then $E$ is a bundle and $\alpha-1\leq \mumin(E)\leq \mumax(E)<\alpha$.
\end{proposition}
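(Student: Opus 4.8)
The plan is to read off parts (1)--(3) directly from the long exact sequence in cohomology attached to the defining resolution, and to deduce part (4) by twisting down to the untwisted case and invoking the structural description recalled in Remark \ref{rem-stabSteiner}.

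First I would record the numerics. From the resolution, the rank of $E$ is $(a+r)-a=r$ and $c_1(E)=(a+r)(\alpha-1)-a(\alpha-2)=r(\alpha-1)+a$, so $\mu(E)=\alpha-1+\frac{a}{r}$. Since $0\le a<r$ forces $\alpha-1\le \mu(E)<\alpha$, we get $\lfloor\mu\rfloor=\alpha-1$ and hence $\alpha_\mu=\alpha$, which is (1)--(2). For (3) I would take cohomology of
$$0\to \OO(\alpha-2)^{a}\fto{M} \OO(\alpha-1)^{a+r}\to E \to 0.$$
Line bundles on $\P^2$ have vanishing $H^1$, and $H^2(\OO(k))=0$ for $k\ge -2$; since $\alpha\ge 1$ the twists $\alpha-2\ge -1$ and $\alpha-1\ge 0$ both satisfy this, so $H^1$ and $H^2$ of the two bundles vanish. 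The long exact sequence then immediately yields $H^1(E)=H^2(E)=0$ together with
$$h^0(E)=\chi(E)=(a+r)\binom{\alpha+1}{2}-a\binom{\alpha}{2}.$$
A direct expansion rewrites the right-hand side as $\frac{r\alpha}{2}\left(\alpha+1+\frac{2a}{r}\right)$, and substituting $\mu=\alpha-1+\frac{a}{r}$ into $\beta_{r,\mu}=r\alpha\left(\mu-\frac{1}{2}\alpha+\frac{3}{2}\right)$ gives the same value, completing (3).

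For (4) I would set $E':=E(1-\alpha)$, the untwisted Steiner sheaf fitting in $0\to \OO(-1)^a\to \OO^{a+r}\to E'\to 0$ with slope $\frac{a}{r}\in[0,1)$. Tensoring by a line bundle is exact, preserves $\mu$-semistability, and shifts every Harder--Narasimhan slope by the same constant, so it suffices to prove that for $r\ge 2$ and general $M$ the sheaf $E'$ is a bundle with $0\le \mumin(E')\le \mumax(E')<1$; adding $\alpha-1$ to each slope then gives the claim for $E$. Remark \ref{rem-stabSteiner} supplies precisely this: for general $M$ with $r\ge 2$, $E'$ is locally free, and either it is $\mu$-stable of slope $\frac{a}{r}\in[0,1)$, or it splits as $F_{i-1}^{\oplus m}\oplus F_i^{\oplus n}$ with consecutive Fibonacci bundles whose slopes $\frac{f_{2i-2}}{f_{2i-1}}$ and $\frac{f_{2i}}{f_{2i+1}}$ lie in $[0,\varphi-1]\subset[0,1)$. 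In either case the Harder--Narasimhan slopes of $E'$ lie in $[0,1)$.

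I expect the only substantive step to be the upper bound $\mumax(E')<1$. The lower bound is elementary: dualizing the defining sequence exhibits $(E')^*$ as a subsheaf of $\OO^{a+r}$, so $\mumax((E')^*)\le 0$ and hence $\mumin(E')=-\mumax((E')^*)\ge 0$. The upper bound, by contrast, resists naive slope counting — taking the preimage in $\OO^{a+r}$ of a destabilizing subsheaf $F\subset E'$ only yields $c_1(F)\le a$, which fails to force $\mu(F)<1$ when $F$ has small rank — so it genuinely relies on the genericity of $M$ via the Fibonacci classification cited in Remark \ref{rem-stabSteiner}.
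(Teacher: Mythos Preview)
Your proof is correct and follows essentially the same approach as the paper's: direct computation of the numerics, the long exact sequence in cohomology for the vanishing, and an appeal to the Fibonacci classification (Remark \ref{rem-stabSteiner}) for the Harder--Narasimhan slope bounds in the general case. The paper's proof is terse (``straightforward,'' ``easy to verify,'' ``follows from the discussion before the proposition''), and you have simply written out the details it omits; your added commentary on why the upper bound $\mumax(E')<1$ genuinely requires genericity is a helpful clarification but not a departure in method.
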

\begin{proof}
The computation of $\mu$ and $\alpha_\mu$ is straightforward.  It is then easy to verify that $h^0(E) = \beta$ and $E$ has no higher cohomology.  The second statement when $r\geq2$ and $M$ is general follows from the discussion before the proposition.  Either $\mu > \alpha+\varphi-2$ and $E$ is $\mu$-stable, or $\mu < \alpha + \varphi-2$ and $\mumax(E)$ is less than $\alpha+\varphi-2$.
\end{proof}

The bundles in the previous proposition can now be seen to be the basic building blocks for all sheaves $E$ which are not too unstable and have $h^0(E) = \beta$.

\begin{theorem}\label{thm-maxHN}
Let $E$ be a torsion-free sheaf with rank $r$ and slope $\mu\geq 0$ which has $$\alpha-2<\mumin(E)\leq \mumax(E)<\alpha$$ and $h^0(E) = \beta$.  Then $E$ is a bundle, and every Harder-Narasimhan factor $\gr_i$ of $E$ is a semistable twisted Steiner bundle of the form $$0\to \OO(\alpha-2)^{a_i}\to \OO(\alpha-1)^{a_i+r_i}\to \gr_i\to 0,$$ where $0\leq a_i < r_i$.

Conversely, if $\alpha\geq 1$ and $E$ is any bundle such that its Harder-Narasimhan factors $\gr_i$ all have the above form, then $\alpha-1\leq \mumin(E)\leq \mumax(E)<\alpha$ and $h^0(E) = \beta$.
\end{theorem}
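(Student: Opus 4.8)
The plan is to prove the two directions separately, reducing the forward direction to a statement about each Harder–Narasimhan factor and then extracting a Steiner resolution of each factor by a Beilinson argument.

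\emph{Reduction to maximal semistable factors.} Let $\gr_1,\ldots,\gr_\ell$ be the $\mu$-HN factors of $E$, with ranks $r_i$ and slopes $\mu_i\in(\alpha-2,\alpha)$, and set $\alpha^i=\alpha_{\mu_i}$ and $\beta^i=\beta_{r_i,\mu_i}$. I would rerun the estimate in the proof of Corollary \ref{cor-sectionBound}: $h^0(E)\le\sum_i h^0(\gr_i)\le\sum_i\beta^i\le\sum_i r_i\alpha(\mu_i-\tfrac12\alpha+\tfrac32)=\beta$. Since $h^0(E)=\beta$, every inequality is an equality. The key point is that the last inequality is an equality in summand $i$ only if $\alpha^i=\alpha$: the alternative $\alpha^i=\alpha-1$ forces equality only when $\mu_i=\alpha-2$, which is excluded by the \emph{strict} bound $\mu_i>\alpha-2$. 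Hence every $\mu_i\in[\alpha-1,\alpha)$, and equality in the middle gives $h^0(\gr_i)=\beta^i$, so each factor is a maximal $\mu$-semistable sheaf of slope in $[\alpha-1,\alpha)$. As each such factor has deficiency $0<\alpha$, Theorem \ref{thm-h1} yields $H^1(\gr_i)=0$ and that $\gr_i$ is globally generated.

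\emph{Local freeness and a tower of vanishings.} First I would show $\gr_i$ is locally free: if not, the sequence $0\to\gr_i\to\gr_i^{\vee\vee}\to T\to0$ has $T$ of finite length $>0$, but $\gr_i^{\vee\vee}$ is $\mu$-semistable of the same rank and slope, so $h^0(\gr_i^{\vee\vee})\le\beta^i$ by Corollary \ref{cor-sectionBound} while $h^0(\gr_i^{\vee\vee})\ge h^0(\gr_i)=\beta^i$; equality forces $H^0(\gr_i^{\vee\vee})\to H^0(T)$ to vanish, and since $H^1(\gr_i)=0$ this gives $H^0(T)=0$, contradicting $T\neq0$. Next I propagate maximality downward: for $0\le j\le\alpha-1$ I induct to show $\gr_i(-j)$ is maximal of slope $\mu_i-j\ge0$ with $H^1=0$ and globally generated. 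Restricting to a general line $L$, global generation gives $h^1(\gr_i(-j)|_L)=0$ and $h^0(\gr_i(-j)|_L)=r_i(\mu_i-j+1)$, and the sequence $0\to\gr_i(-j-1)\to\gr_i(-j)\to\gr_i(-j)|_L\to0$ together with $H^1(\gr_i(-j))=0$ and Lemma \ref{lem-obvious_beta_facts}(3) gives $h^0(\gr_i(-j-1))=\beta_{r_i,\mu_i-j-1}+h^1(\gr_i(-j-1))$; comparing with the upper bound of Corollary \ref{cor-sectionBound} forces $H^1(\gr_i(-j-1))=0$ and maximality, whence Theorem \ref{thm-h1} again gives global generation. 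Writing $a_i=c_1(\gr_i)-r_i(\alpha-1)$, so that $\mu_i=\alpha-1+\tfrac{a_i}{r_i}$ with $0\le a_i<r_i$, the case $j=\alpha-1$ shows $H:=\gr_i(-(\alpha-1))$ is a globally generated bundle of slope $a_i/r_i\in[0,1)$ with $h^0(H)=\beta_{r_i,a_i/r_i}=a_i+r_i$ and $H^1(H)=0$. Serre duality and semistability give $H^2(H)=0$, $H^0(H(-1))=0$, $H^2(H(-1))=0$, and $\chi(H(-1))=\chi(H)-\chi(H|_L)=(a_i+r_i)-(a_i+r_i)=0$, so also $H^1(H(-1))=0$; thus $H^\bullet(H(-1))=0$ entirely.

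\emph{The resolution via Beilinson.} With these vanishings I run the Beilinson spectral sequence for $H$ with the collection $\OO(-2),\OO(-1),\OO$ exactly as in Corollary \ref{cor-ggbundle}. The vanishing of $H^\bullet(H(-1))$ kills the $\OO(-2)$ column, and $H^1(H)=H^2(H)=0$ leaves only $H^0(H)$ in the $\OO$ column. From the Euler sequence $0\to H(-2)\to H(-1)^3\to H\otimes T(-2)\to0$ and $H^\bullet(H(-1))=0$ I obtain $H^1(H\otimes T(-2))\cong H^2(H(-2))=H^0(H^\vee(-1))^\ast=0$ by semistability, while $H^0(H\otimes T(-2))\cong H^1(H(-2))=:a_i'$. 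The $E_1$-page thus collapses to the two-term complex $\OO(-1)^{a_i'}\to\OO^{a_i+r_i}$, and convergence to $H$ in degree $0$ forces this map to be injective with cokernel $H$; comparing ranks gives $a_i'=a_i$. Twisting by $\alpha-1$ yields $0\to\OO(\alpha-2)^{a_i}\to\OO(\alpha-1)^{a_i+r_i}\to\gr_i\to0$ with $0\le a_i<r_i$. Since each $\gr_i$ is a bundle and $E$ is an iterated extension of the $\gr_i$, $E$ is a bundle.

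\emph{Converse and main obstacle.} For the converse, Proposition \ref{prop-Steiner} gives directly that each factor of the stated form has $\mu_i\in[\alpha-1,\alpha)$, $H^1(\gr_i)=0$, and $h^0(\gr_i)=\beta_{r_i,\mu_i}$; as the $\gr_i$ are the HN factors this yields $\alpha-1\le\mumin(E)\le\mumax(E)<\alpha$, while $H^1(\gr_i)=0$ lets me build up $H^1(E)=0$ and $h^0(E)=\sum_i h^0(\gr_i)$ along the filtration, which equals $\beta$ by the additivity computation in Corollary \ref{cor-sectionBound}. I expect the main obstacle to be the cohomological bookkeeping in the forward direction: establishing the full tower of twisted vanishings culminating in $H^\bullet(\gr_i(-\alpha))=0$ by propagating maximality, and then verifying that the Beilinson page degenerates to exactly the two Steiner terms. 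The local freeness input via the double dual is the other essential, if brief, ingredient.
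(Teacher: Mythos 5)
Your argument is correct, but the central step is handled quite differently from the paper. Both proofs begin the same way: equality in the chain of inequalities from Corollary \ref{cor-sectionBound} forces each $\alpha^i=\alpha$ and each factor $\gr_i$ to be a maximal $\mu$-semistable sheaf, and Theorem \ref{thm-h1} then gives $H^1(\gr_i)=0$ and global generation. At that point the paper simply quotes Dr\'ezet's result that any semistable sheaf with the numerical invariants of a general twisted Steiner bundle \emph{is} such a bundle (these invariants have the minimal discriminant for their rank and slope), whereas you reprove this from scratch: local freeness via the double dual, a downward induction producing the tower of vanishings culminating in $H^\bullet(\gr_i(-\alpha))=0$, and then the Beilinson spectral sequence to extract the two-term resolution. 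Your tower induction is sound --- at each step the restriction sequence, Lemma \ref{lem-obvious_beta_facts}(3), and Corollary \ref{cor-sectionBound} squeeze $h^1$ to zero before Theorem \ref{thm-h1} restores global generation --- and the Beilinson computation checks out (the identifications $H^q(H\te T(-2))\cong H^{q+1}(H(-2))$ and the Serre-duality vanishings are all correct). What your route buys is self-containedness, with no appeal to \cite{Drezet}; what the paper's route buys, besides brevity, is the minimal-discriminant observation, which is exactly what it uses to upgrade the factors from $\mu$-semistable to Gieseker semistable.

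That upgrade is the one loose end in your write-up: the theorem asserts the $\gr_i$ are semistable in the Gieseker sense, while your $\gr_i$ are a priori only $\mu$-semistable. The gap closes in one line --- any saturated subsheaf $F\subseteq\gr_i$ of slope $\mu_i$ has $H^2(F)=\Hom(F,\OO(-3))^*=0$ by $\mu$-semistability, hence $\chi(F)\leq h^0(F)\leq \beta_{r(F),\mu_i}=\tfrac{r(F)}{r_i}\chi(\gr_i)$, so no subsheaf Gieseker-destabilizes --- but it should be said. The converse direction is the same in both proofs.
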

\begin{proof}
First suppose that $E$ is a semistable sheaf with $h^0(E) = \beta$.  By Theorem \ref{thm-h1}, we find that $E$ has $h^1(E) = 0$ and $\chi(E) = \beta$.  If we write $\mu = \alpha-1+ \frac{a}{r}$ with $0\leq a <r$, then $E$ has the same Chern classes as a general twisted Steiner bundle of the form $$0\to \OO(\alpha-2)^a\to \OO(\alpha-1)^{a+r}\to F\to 0.$$  Dr\'ezet \cite{Drezet} shows that any semistable sheaf with these numerical invariants is in fact a twisted Steiner bundle of this form.  Furthermore, these bundles have the minimal possible discriminant among semistable sheaves of their rank and slope.  It follows by considering (Gieseker) Harder-Narasimhan filtrations that if instead we only assumed $E$ was $\mu$-semistable, then it would actually be semistable.

% First suppose that $E$ is a semistable \emph{bundle} with $h^0(E) = \beta$.  By Theorem \ref{thm-h1}, we find that $E$ has $h^1(E) = 0$ and $\chi(E) = \beta$.  If we write $\mu = k+ \frac{a}{r}$ with $k$ being an integer and $0\leq a <r$, then $E$ has the same Chern classes as a general twisted Steiner bundle of the form $$0\to \OO(k-1)^a\to \OO(k)^{a+r}\to F\to 0.$$ But Dr\'ezet \cite{Drezet} shows that any semistable sheaf with these numerical invariants is in fact a twisted Steiner bundle of this form.  Furthermore, these bundles have the minimal possible discriminant among semistable sheaves of their rank and slope.  It follows by considering (Gieseker) Harder-Narasimhan filtrations that if instead we only assumed $E$ was $\mu$-semistable, then it would actually be semistable.

% Next suppose that $E$ is a semistable \emph{sheaf} with $h^0(E) = \beta$  and that it is singular at at least one point.  Then it includes into its double dual with a 0-dimensional torsion cokernel $T$: $$0\to E \to E^{**}\to T\to 0.$$  The bundle $E^{**}$ is automatically $\mu$-semistable.  It has the same rank and $c_1$ as $E$, and at least as many sections, so in fact $h^0(E^{**}) = \beta$ by Corollary \ref{cor-sectionBound}.   By the previous paragraph, $E^{**}$ is a semistable twisted Steiner bundle.  But any such bundle is globally generated, while the sections of $E$ cannot generate all of $E^{**}$.  Thus $h^0(E) < h^0(E^{**})$, contradicting that both have $\beta$ sections.  Therefore $E$ is actually a bundle.

Now if $E$ is a torsion-free sheaf with $\alpha-2<\mumin(E)\leq \mumax(E)<\alpha$ and $h^0(E) = \beta$, then we examine the proof that $h^0(E) \leq \beta$ given in the proof of Corollary \ref{cor-sectionBound}.  In order for equality to hold in that proof, each $\alpha^i$ must equal $\alpha$ and each Harder-Narasimhan factor $\gr_i$ must have $h^0(\gr_i) = \beta^i$.  By our earlier analysis, each $\gr_i$ is a semistable twisted Steiner bundle of the required form.

For the converse, we note that since $h^1(\gr_i)=0$ for a twisted Steiner bundle $\gr_i$ we have $h^0(E) = \sum_i h^0(\gr_i)$.  Since each $\gr_i$ has $\beta_{r_i,\mu_i}$ sections, it follows that $E$ has $\beta$ sections.
\end{proof}

In fact, any extension of two twisted Steiner bundles with the same twist is again a twisted Steiner bundle with that twist.  

\begin{lemma}\label{lem-extension}
Let $E$ be any bundle fitting as an extension $$0\to S_1\to E\to S_2\to 0$$ where each $S_i$ is an (untwisted) Steiner bundle $$0\to \OO(-1)^{a_i}\to \OO^{a_i+r_i}\to S_i\to 0.$$ Then $E$ is also a Steiner bundle.
\end{lemma}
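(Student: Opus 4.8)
The plan is to build a Steiner presentation of $E$ directly, splicing the two given resolutions together in the manner of the horseshoe lemma. Concretely, the goal is to produce an exact sequence
$$0 \to \OO(-1)^{a_1+a_2} \to \OO^{(a_1+r_1)+(a_2+r_2)} \xrightarrow{q} E \to 0,$$
which, upon setting $a = a_1+a_2$ and $r = r_1+r_2$ so that $(a_1+r_1)+(a_2+r_2) = a+r$, exhibits $E$ as a Steiner bundle with the expected invariants $a$ and $r$.

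First I would record the vanishing $H^1(S_1) = 0$, which is immediate from the cohomology long exact sequence attached to $0 \to \OO(-1)^{a_1} \to \OO^{a_1+r_1} \to S_1 \to 0$, using $H^1(\OO_{\P^2}) = 0$ and $H^2(\OO_{\P^2}(-1)) = 0$. This gives $\Ext^1(\OO^{a_2+r_2}, S_1) = H^1(S_1)^{a_2+r_2} = 0$, so applying $\Hom(\OO^{a_2+r_2}, -)$ to the extension $0 \to S_1 \to E \to S_2 \to 0$ shows that $\Hom(\OO^{a_2+r_2}, E) \to \Hom(\OO^{a_2+r_2}, S_2)$ is surjective. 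Hence the given surjection $q_2 \colon \OO^{a_2+r_2} \to S_2$ lifts to a map $\tilde q_2 \colon \OO^{a_2+r_2} \to E$ covering it. Combining $\tilde q_2$ with the composite $q_1 \colon \OO^{a_1+r_1} \to S_1 \hookrightarrow E$ produces $q = (q_1, \tilde q_2) \colon \OO^{a_1+r_1}\oplus \OO^{a_2+r_2} \to E$, which I would organize into a morphism of short exact sequences from the split sequence $0 \to \OO^{a_1+r_1} \to \OO^{a_1+r_1}\oplus\OO^{a_2+r_2} \to \OO^{a_2+r_2} \to 0$ down to $0 \to S_1 \to E \to S_2 \to 0$, with vertical maps $q_1, q, q_2$.

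The snake lemma then does the bookkeeping: since the outer vertical maps $q_1$ and $q_2$ are surjective with kernels $\OO(-1)^{a_1}$ and $\OO(-1)^{a_2}$, the middle map $q$ is surjective and its kernel $K$ fits into $0 \to \OO(-1)^{a_1} \to K \to \OO(-1)^{a_2} \to 0$. The one substantive point — the step I expect to be the crux — is the identification $K \cong \OO(-1)^{a_1+a_2}$. This amounts to splitting the displayed extension, and it does split because its class lives in $\Ext^1(\OO(-1)^{a_2}, \OO(-1)^{a_1}) \cong H^1(\OO_{\P^2})^{a_1 a_2} = 0$. With $K \cong \OO(-1)^{a_1+a_2}$ in hand, the sequence $0 \to K \to \OO^{a+r} \xrightarrow{q} E \to 0$ is precisely a Steiner presentation, and the entries of $K \hookrightarrow \OO^{a+r}$ are automatically linear forms since $\Hom(\OO(-1),\OO) = H^0(\OO_{\P^2}(1))$. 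In short, the argument is formal apart from the two standard cohomological vanishings on $\P^2$ (used to lift $q_2$ and to split $K$), so I anticipate no serious obstacle.
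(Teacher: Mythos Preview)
Your proof is correct, but it takes a genuinely different route from the paper's. The paper computes the cohomology of $E$, $E\otimes T_{\P^2}(-2)$, and $E(-1)$ from the given Steiner resolutions of $S_1,S_2$ and the extension sequence, and then reads off the Steiner resolution of $E$ from the Beilinson spectral sequence for the exceptional collection $\OO(-2),\OO(-1),\OO$. Your argument is instead a direct horseshoe-lemma construction: lift the surjection $\OO^{a_2+r_2}\twoheadrightarrow S_2$ through $E$ using $H^1(S_1)=0$, combine with $\OO^{a_1+r_1}\twoheadrightarrow S_1\hookrightarrow E$, and identify the kernel via $\Ext^1(\OO(-1),\OO(-1))=0$. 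Your approach is more elementary---it needs only two easy cohomology vanishings on $\P^2$ and no spectral sequence---whereas the paper's approach, while heavier, is more uniform with the Beilinson techniques used elsewhere in the paper (e.g.\ in Corollary~\ref{cor-ggbundle}) and makes the canonical nature of the resolution transparent. Both yield the same Steiner presentation with $a=a_1+a_2$ and $r=r_1+r_2$.
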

\begin{proof}
We use the Beilinson spectral sequence for the exceptional collection $\OO(-2),\OO(-1),\OO$ with dual collection $\OO(-1),T_{\P^2}(-2),\OO$ to compute $E$ (see the proof of Corollary \ref{cor-ggbundle}).  We compute $h^0(S_i) = a_i+r_i$ and $h^1(S_i)=0$, so $h^0(E) = a_1+a_2+r_1+r_2$ and $h^1(E) = 0$.  Since $T(-2)$ has no cohomology,  the cohomology of $S_i\te T(-2)$ is given by $$H^q(S_i\te T(-2)) \cong H^{q+1}(T(-3)^{a_i}),$$ so $h^0(S_i\te T(-2))=a_i$ and $h^1(S_i\te T(-2))=0$.  Thus $$h^0(E\te T(-2)) = a_1+a_2 \quad \mbox{and} \quad h^1(E\te T(-2))=0.$$  Clearly $S_i(-1)$ has no cohomology, so $E(-1)$ has no cohomology.  A resolution
$$0\to \OO(-1)^{a_1+a_2}\to \OO^{a_1+a_2+r_1+r_2}\to E\to 0$$ then follows from the Beilinson spectral sequence. 
\end{proof}

Combining Theorem \ref{thm-maxHN} and Lemma \ref{lem-extension} gives the following simpler statement.

\begin{corollary}\label{cor-maxSteiner}
Let $E$ be a torsion-free sheaf with rank $r$ and slope $\mu\geq 0$ which has $$\alpha-2<\mumin(E)\leq \mumax(E)<\alpha$$ and $h^0(E) = \beta$.  Then $E$ is a twisted Steiner bundle of the form $$0\to \OO(\alpha-2)^{a}\to \OO(\alpha-1)^{a+r}\to E\to 0,$$ where $0\leq a < r$.
\end{corollary}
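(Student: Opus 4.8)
The plan is to deduce the corollary by combining its two ingredients—Theorem \ref{thm-maxHN} and Lemma \ref{lem-extension}—via an induction on the length of the Harder–Narasimhan filtration, after normalizing the twist. First I would apply Theorem \ref{thm-maxHN} directly to $E$: under the stated hypotheses $\alpha-2<\mumin(E)\leq\mumax(E)<\alpha$ and $h^0(E)=\beta$, it tells me that $E$ is a bundle and that each Harder–Narasimhan factor $\gr_i$ is a semistable twisted Steiner bundle
$$0\to \OO(\alpha-2)^{a_i}\to \OO(\alpha-1)^{a_i+r_i}\to \gr_i\to 0$$
with $0\leq a_i<r_i$. Twisting everything by $\OO(1-\alpha)$ converts each $\gr_i$ into an \emph{untwisted} Steiner bundle $\gr_i(1-\alpha)$ with resolution $0\to\OO(-1)^{a_i}\to\OO^{a_i+r_i}\to\gr_i(1-\alpha)\to 0$, which is precisely the input format required by Lemma \ref{lem-extension}. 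Since a twisted Steiner bundle of this form is determined by its Chern character, it suffices to show $E(1-\alpha)$ is an untwisted Steiner bundle.

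Next I would run the induction. Write the Harder–Narasimhan filtration as $0=E_0\subset E_1\subset\cdots\subset E_\ell=E$ with $E_i/E_{i-1}\cong\gr_i$, and twist to get $0=E_0(1-\alpha)\subset\cdots\subset E_\ell(1-\alpha)=E(1-\alpha)$. I claim each $E_i(1-\alpha)$ is an untwisted Steiner bundle, by induction on $i$. The base case $i=1$ is the single factor $\gr_1(1-\alpha)$. For the inductive step, I have a short exact sequence
$$0\to E_{i-1}(1-\alpha)\to E_i(1-\alpha)\to \gr_i(1-\alpha)\to 0$$
in which, by the inductive hypothesis, $E_{i-1}(1-\alpha)$ is an untwisted Steiner bundle and $\gr_i(1-\alpha)$ is an untwisted Steiner bundle. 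Here I would note that $E_i(1-\alpha)$ is locally free: an extension of locally free sheaves is locally free, so each $E_i$—and hence each $E_i(1-\alpha)$—is a bundle, meeting the locally-free hypothesis of Lemma \ref{lem-extension}. That lemma then applies verbatim to conclude $E_i(1-\alpha)$ is an untwisted Steiner bundle. Taking $i=\ell$ shows $E(1-\alpha)$ is a Steiner bundle with a resolution $0\to\OO(-1)^{a}\to\OO^{a+r}\to E(1-\alpha)\to 0$; twisting back by $\OO(\alpha-1)$ yields the asserted form for $E$.

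The last step is the numerical bookkeeping, which I expect to be the only real point requiring care rather than the extension theory itself. The rank of $E(1-\alpha)$ is $r=\sum_i r_i$, since ranks add across the filtration, and its slope is $\mu-(\alpha-1)=a/r$ where $a=\sum_i a_i$ (equivalently, $a$ is read off from the number $a+r=h^0(E(1-\alpha))$ of rank-one summands in the resolution, using that each factor has vanishing $h^1$ so global sections are additive). Because every twisted factor $\gr_i$ has slope $\mu_i\in[\alpha-1,\alpha)$, the weighted-average slope $\mu$ of $E$ lies in $[\alpha-1,\alpha)$, forcing $a/r\in[0,1)$ and hence $0\leq a<r$. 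This matches the claimed bounds and completes the argument; the genuinely substantive input, the recognition of each factor and the Beilinson-spectral-sequence computation behind the extension, has already been discharged in Theorem \ref{thm-maxHN} and Lemma \ref{lem-extension}.
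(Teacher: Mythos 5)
Your proposal is correct and follows exactly the route the paper intends: the paper gives no explicit proof beyond the remark that the corollary follows by ``combining Theorem \ref{thm-maxHN} and Lemma \ref{lem-extension},'' and your induction along the Harder--Narasimhan filtration (after twisting by $\OO(1-\alpha)$ to reduce to untwisted Steiner bundles) is the natural way to carry that out. The bookkeeping of $a=\sum_i a_i$, $r=\sum_i r_i$ and the slope bound giving $0\leq a<r$ is also right.
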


\section{Deficiency $\alpha$}\label{sec-alpha}

\subsection{Classification} We have seen in Theorem \ref{thm-h1} that if $E$ is a semistable bundle with deficiency $\alpha$ which is globally generated in codimension $1$, then $h^1(E) = 0$.  We consider these to be the ``primitive'' examples of bundles of deficiency $\alpha$, and they are somewhat rare.

On the other hand, if we omit the hypothesis that $E$ is globally generated in codimension $1$, then examples with deficiency $\alpha$ are ubiquitous.  However, they all arise from twisted Steiner bundles via elementary transformations along a line.

\begin{theorem}\label{thm-defAlpha}
Let $E$ be a $\mu$-semistable sheaf on $\P^2$ of rank $r$ and slope $\mu> 0$ having $h^0(E) = \beta-\alpha$.  Suppose that $E$ fails to be globally generated in codimension $1$, and write $\mu = \alpha-1+\frac{a}{r}$ where $0\leq a <r$.  \begin{enumerate}
\item If $a>0$, then there is a twisted Steiner bundle $S$ of the form $$0\to \OO(\alpha-2)^{a-1}\to \OO(\alpha-1)^{a+r-1}\to S\to 0$$ such that $E$ fits as an extension $$0\to S\to E\to \OO_L(-b)\to 0$$ for some line $L$, where $b=1+h^1(E)$.
\item If $a=0$ and $E$ is Gieseker semistable, then we must have $r=1$ and $E = I_Z(\alpha-1)$ for a collinear $0$-dimensional subscheme $Z\subset \P^2$ of length at least $\alpha$. \end{enumerate}
\end{theorem}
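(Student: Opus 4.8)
The plan is to produce the line $L$ and the subsheaf $S$ as the image of the evaluation map, show that this image is maximal for its own slope, and then invoke Corollary~\ref{cor-maxSteiner}. Since $\mu>0$, $E$ has deficiency $\alpha$, and $E$ fails to be globally generated in codimension $1$, Lemma~\ref{lem-ggcodim1}(2) provides a unique line $L$ along which global generation fails. Because $\mu>0$, $E$ is generically globally generated (Lemma~\ref{lem-ggg}), so letting $F\subset E$ be the image of $\ev\colon H^0(E)\te\OO_{\P^2}\to E$ gives $h^0(F)=h^0(E)=\beta-\alpha$ and an exact sequence $0\to F\to E\to T\to 0$ with $F$ of full rank $r$, where $T$ is supported on $L$ and $c_1(T)=1$ by the computation in the proof of Lemma~\ref{lem-ggcodim1}(2). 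In case (1), where $a>0$, this gives $\mu(F)=\alpha-1+\frac{a-1}{r}$ with $0\le a-1<r$, so $\alpha_{\mu(F)}=\alpha$, and a direct computation yields $\beta_{r,\mu}-\beta_{r,\mu(F)}=r\alpha(\mu-\mu(F))=\alpha$. Hence $h^0(F)=\beta_{r,\mu(F)}$: the image $F$ is a \emph{maximal} sheaf for its rank and slope, and the goal becomes to identify $F$ with the asserted twisted Steiner bundle.

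To apply Corollary~\ref{cor-maxSteiner} to $F$, I must verify $\alpha-2<\mumin(F)\le\mumax(F)<\alpha$. The upper bound is immediate: $F\subset E$ and $E$ is $\mu$-semistable, so $\mumax(F)\le\mu<\alpha$. The lower bound $\mumin(F)>\alpha-2$ is the crux. Since $F$ is globally generated, its lowest Harder--Narasimhan factor is a globally generated semistable quotient, hence of nonnegative slope, giving $\mumin(F)\ge 0$; this already settles the base case $\alpha=1$, where $\alpha-2=-1$, and Corollary~\ref{cor-maxSteiner} then applies directly. For $\alpha\ge 2$ the naive section count is insufficient, because the bound of Theorem~\ref{thm-sectionBound} overestimates $h^0$ of Harder--Narasimhan factors of slope below $\alpha-2$, so those factors are not excluded by counting sections alone. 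I therefore expect to argue by induction on $\alpha$, restricting to the sequence $0\to F(-1)\to F\to F|_\ell\to 0$ for a general line $\ell$ and applying the dual-plane surjectivity technique of Proposition~\ref{prop-h1}: global generation forces $F|_\ell$ to be nonspecial and globally generated, the connecting maps $H^1(F(-1))\to H^1(F)$ assemble into a surjection of sheaves on $\P^{2*}$, and an Euler-characteristic estimate on its kernel propagates the Steiner structure (equivalently the slope bound) from level $\alpha-1$ to level $\alpha$. \textbf{This slope bound is the main obstacle}, as it is precisely the point where maximality and global generation must be combined rather than invoked separately.

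Granting that $F$ is a twisted Steiner bundle $0\to\OO(\alpha-2)^{a-1}\to\OO(\alpha-1)^{a+r-1}\to F\to 0$, it has $h^1(F)=h^2(F)=0$, the latter since $\alpha\ge 1$ gives $H^2(\OO(\alpha-1))=0$. Setting $S=F$, the long exact sequence of $0\to S\to E\to T\to 0$ identifies $T$: the isomorphism $H^0(S)\xrightarrow{\sim}H^0(E)$ together with $h^1(S)=0$ forces $H^0(T)=0$. Consequently $T$ has no zero-dimensional subsheaf, since any $\OO_p\hookrightarrow T$ would contribute to $H^0(T)$; thus $T$ is pure of dimension $1$, and being supported on $L$ with $c_1(T)=1$ it is a line bundle $\OO_L(-b)$ with $b\ge 1$. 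Finally $h^1(S)=h^2(S)=0$ give $H^1(E)\cong H^1(T)=H^1(\OO_L(-b))$, of dimension $b-1$, whence $b=1+h^1(E)$, completing case (1).

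For case (2), where $a=0$, we have $\mu=\alpha-1\in\Z$ and $\alpha\ge 2$ since $\mu>0$; as the deficiency $\alpha$ is strictly less than $\binom{\alpha+1}{2}$, Lemma~\ref{lem-ggg} shows $E$ is generically globally generated, so $F$ again has full rank $r$ with $\mu(F)=\alpha-1-\frac1r$, now with $\alpha_{\mu(F)}=\alpha-1$. A direct computation gives $h^0(F)=\beta_{r,\mu(F)}+(r-1)$, which for $r\ge 2$ exceeds the bound $\beta_{r,\mu(F)}$ available from Corollary~\ref{cor-sectionBound} after the same Harder--Narasimhan slope analysis as in case (1); this contradiction forces $r=1$. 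Then $E=I_Z(\alpha-1)$ for a zero-dimensional scheme $Z$, and failure of global generation along the unique line $L$ forces every section of $I_Z(\alpha-1)$ to vanish on $L$, which happens exactly when $Z$ is collinear on $L$, while the deficiency hypothesis pins its length to be at least $\alpha$, giving the stated conclusion.
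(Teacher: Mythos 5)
Your overall strategy (take the image $S$ of the evaluation map, show it is maximal for its own rank and slope, invoke Corollary \ref{cor-maxSteiner}, then analyze the torsion quotient) is the same as the paper's, but two steps do not go through as written. First, the bound $\mumin(S)>\alpha-2$, which you flag as ``the main obstacle'' and address only by sketching a hypothetical induction on $\alpha$, has a one-line proof that you missed: it comes from the $\mu$-semistability of $E$, not from global generation of $S$. If $S\onto Q$ is the minimal destabilizing quotient of $S$, of rank $q$, then for $q<r$ its kernel $K$ is a subsheaf of $E$ of positive rank, so $c_1(K)\le (r-q)\mu$; since $c_1(S)=c_1(E)-1$ this gives $c_1(Q)\ge q\mu-1$, hence $\mu(Q)\ge \mu-\frac{1}{q}\ge\mu-1>\alpha-2$ whenever $a>0$ (the case $q=r$ being immediate). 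Your proposed induction via the dual-plane technique of Proposition \ref{prop-h1} is never carried out and is unnecessary; as the proposal stands, the key hypothesis of Corollary \ref{cor-maxSteiner} is unverified for $\alpha\ge 2$, so case (1) is incomplete.

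Second, the argument in case (2) is incorrect. You claim that $h^0(S)=\beta_{r,\mu(S)}+(r-1)$ contradicts Corollary \ref{cor-sectionBound} when $r\ge 2$, but that corollary requires $\mumax(S)<\alpha_{\mu(S)}=\alpha-1$, whereas here $\mumax(S)$ can, and in the relevant situation does, equal $\alpha-1=\mu$. Indeed the sheaf $\OO(\alpha-1)^{\oplus(r-1)}\oplus\OO(\alpha-2)$ has rank $r$, slope $\alpha-1-\frac{1}{r}$, and exactly $\beta_{r,\mu(S)}+(r-1)$ sections, so there is no numerical contradiction and section counting alone cannot force $r=1$. This is exactly why part (2) of the statement carries the extra hypothesis that $E$ is Gieseker semistable, which your proposal never uses. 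The paper's argument instead shows that equality in the section count forces the Harder--Narasimhan factors of $S$ to be $\OO(\alpha-1)^{\oplus(r-1)}$ and $\OO(\alpha-2)$, and then the resulting map $\OO(\alpha-1)\to E$ Gieseker-destabilizes $E$ unless $r=1$.
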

\begin{proof}
Let $S$ be the image of the canonical evaluation $H^0(E)\te E\to E$.  Then we are in the situation of the proof of Lemma \ref{lem-ggcodim1} (2), and there is an exact sequence $$0\to S\to E\to T\to 0$$ where $T$ is torsion and has $c_1(T) = 1$.  Then $S$ has rank $r$ and slope $\mu'=\mu - \frac{1}{r}$.

Since $E$ is $\mu$-semistable, $S$ is not too far from being stable.  More precisely, we have $$\mumax(S) \leq \mu = \mu' + \frac{1}{r},$$ and $\mumin(S)$ will be as small as possible if the maximal destabilizing subsheaf has rank $r-1$ and slope $\mu$.  This gives $\mumin(S) \geq \mu-1$.  

(1) Now suppose that $a>0$.   This gives $\alpha_{\mu'}=\alpha$ and $\mumin(S) > \alpha-2$.  We compute $$\beta_{r,\mu'} = r\alpha\left(\mu'-\frac{1}{2}\alpha+\frac{3}{2}\right)=r\alpha\left(\mu-\frac{1}{r}-\frac{1}{2}\alpha+\frac{3}{2}\right)=\beta_{r,\mu}-\alpha,$$ so that $S$ has deficiency $0$ since $h^0(S) = h^0(E) = \beta_{r,\mu}-\alpha$.  Corollary \ref{cor-maxSteiner} shows that $S$ is a twisted Steiner bundle of the required form.  We now analyze $T$.  Since $H^1(S)=0$ and $H^0(S)=H^0(E)$, we find that $T$ has no global sections.  Therefore it does not have any $0$-dimensional subsheaf, and $T$ is pure of dimension $1$.  Since $c_1(T) = 1$, it is a line bundle supported on a line $L$, and since it has no global sections we find $T \cong \OO_L(-b)$ for some $b>0$.

(2) Suppose instead that $a=0$ and $E$ is Gieseker semistable.   This time we have $\alpha_{\mu'} = \alpha-1$.  Let $\gr_1,\ldots,\gr_\ell$ be the Harder-Narasimhan factors of $S$.  All the factors have slopes $\mu_i$ satisfying $\alpha-2\leq \mu_i \leq \alpha-1$.  For each factor with $\mu_i < \alpha-1$, we can estimate $$\beta_{r_i,\mu_i} = r_i(\alpha-1)\left(\mu_i-\frac{1}{2}(\alpha-1)+\frac{3}{2}\right) \leq r_i\alpha\left(\mu_i -\frac{1}{2}\alpha+\frac{3}{2}\right),$$ and the inequality is strict unless $\mu_i=\alpha-2$.  Using Theorem \ref{thm-sectionBound} we then get $$ h^0(S) \leq \sum_i h^0(\gr_i) \leq \sum_i \beta_{r_i,\mu_i} \leq \sum_i r_i\alpha\left(\mu_i - \frac{1}{2} \alpha + \frac{3}{2}\right) = r\alpha\left(\mu'-\frac{1}{2}\alpha+\frac{3}{2}\right)=\beta_{r,\mu}-\alpha,$$ and since $h^0(S) = \beta_{r,\mu}-\alpha$ we learn that every $\mu_i$ must equal either $\alpha-2$ or $\alpha-1$ and every factor $\gr_i$ is a maximal sheaf.  For $S$ to have slope $\alpha-1-\frac{1}{r}$, the only possibility is that $S$ has Harder-Narasimhan factors $\OO(\alpha-1)^{\oplus(r-1)}$ and $\OO(\alpha-2)$.  But then $r=1$ as otherwise $\OO(\alpha-1)\to E$ Gieseker destabilizes $E$. Thus, $E=I_Z(\alpha-1)$ for some $0$-dimensional subscheme $Z\subset \P^2$ and we have an exact sequence $$0\to \OO(\alpha-2)\to I_Z(\alpha-1)\to \OO_L(-b)\to 0$$ for some $b>0$, where $L$ is the line that $T$ is supported on. Twisting by $\OO(-\alpha+1)$, we get the exact sequence $$0\to \OO(-1)\to I_Z\to \OO_L(-b-\alpha+1)\to 0.$$ It follows that a linear form defining $L$ must vanish on $Z$, which shows that $Z$ must be collinear. Since $h^0(E)=h^0(\OO(\alpha-1))-\alpha$, the length of $Z$ must be at least $\alpha$.
\end{proof}

This classification result allows us to explicitly describe the Brill-Noether loci $B^{\beta-\alpha}$ whenever they are nonempty.  We will take up the question of nonemptiness in the next subsection.

\begin{corollary}\label{cor-irr}
If the Brill-Noether locus $B^{\beta-\alpha} \subset M(r,\mu,\chi)$ is nonempty, then it is irreducible.
\end{corollary}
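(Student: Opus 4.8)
The plan is to show that $B^{\beta-\alpha}(r,\mu,\chi)$ is dominated by an irreducible parameter space, using the structural classification from Theorem \ref{thm-defAlpha}. The key observation is that Theorem \ref{thm-h1}(1) (via Corollary \ref{cor-BNsmall}) already handles the case where every sheaf in the locus has $H^1=0$: there the locus is the entire moduli space $M(r,\mu,\chi)$, which is irreducible by the properties recorded in \S\ref{sec-prelim}. So the substantive case is when a general member $E$ of $B^{\beta-\alpha}$ fails to be globally generated in codimension $1$, and we must show the extensions of Theorem \ref{thm-defAlpha}(1) sweep out an irreducible locus. (The case $a=0$ from Theorem \ref{thm-defAlpha}(2) is a linear system on a line and is easily seen to be irreducible.)

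First I would set up the incidence variety. By Theorem \ref{thm-defAlpha}(1), every sheaf $E$ in the interesting part of the locus fits into an extension
\begin{equation*}
0\to S\to E\to \OO_L(-b)\to 0,
\end{equation*}
where $S$ is a twisted Steiner bundle with resolution $0\to \OO(\alpha-2)^{a-1}\to \OO(\alpha-1)^{a+r-1}\to S\to 0$, where $L$ is a line, and where $b=1+h^1(E)$ is determined by the numerics (the rank, slope, and $\chi$ of $E$ pin down $c_2$, hence $b$). The plan is to parametrize the data $(S,L,E)$. The Steiner bundles $S$ with fixed numerical invariants form an irreducible family: they are parametrized by an open subset of the space of matrices $M$ of linear forms modulo the action of $\GL_{a-1}\times\GL_{a+r-1}$, and this space of matrices is an irreducible variety (an affine space). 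The lines $L$ range over $(\P^2)^*$, which is irreducible. So the base of pairs $(S,L)$ is irreducible.

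Next I would show that, over this irreducible base, the sheaves $E$ are parametrized by an irreducible total space. For fixed $(S,L)$, the sheaf $E$ is an extension class in $\Ext^1(\OO_L(-b),S)$, and I would argue this $\Ext$ group has constant dimension as $(S,L)$ varies (again because $h^0,h^1,h^2$ of the relevant sheaves are constant in the family, forcing $\Ext^1$ to be locally free of constant rank over the irreducible base). The projectivized extension spaces then fit together into a projective bundle over the irreducible base of pairs $(S,L)$, and a projective bundle over an irreducible variety is irreducible. Finally, the universal extension gives a family of sheaves $E$ over this irreducible total space whose image (the semistable ones) is precisely $B^{\beta-\alpha}$ by Theorem \ref{thm-defAlpha}; since the image of an irreducible variety under a morphism is irreducible, so is its closure, giving the irreducibility of the Brill-Noether locus.

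The main obstacle I expect is verifying that the relevant cohomology and $\Ext$ dimensions are genuinely constant over the whole base, so that one truly gets a projective bundle (rather than merely a constructible family of varying fiber dimension whose total space could a priori be reducible). This requires checking that $\dim\Ext^1(\OO_L(-b),S)$ does not jump; one controls this via the long exact sequence computing $\Ext^\bullet(\OO_L(-b),S)$ from the resolution of $S$, reducing everything to cohomology of line bundles $\OO(\alpha-1-j)|_L$ and $\Ext$ groups against $\OO_L(-b)$, which are computable and independent of the choice of general $S$ and $L$. One must also confirm that the locus of semistable $E$ is nonempty and open in the total space so that taking its closure recovers all of $B^{\beta-\alpha}$; nonemptiness is exactly the content of the companion existence results (Theorem \ref{thm-existence}), while openness of semistability is standard. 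Once constancy of $\Ext^1$ is established, the rest is formal.
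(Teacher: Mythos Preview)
Your proposal is correct and follows essentially the same route as the paper: split off the trivial case $\chi\geq\beta-\alpha$ where the locus is the whole moduli space, then parametrize the extensions $0\to S\to E\to\OO_L(-b)\to 0$ by an irreducible open set $U$ of pairs (Steiner matrix, line), verify that $\ext^1(\OO_L(-b),S)$ is constant on $U$ (the paper computes it explicitly as $r(\mu+b+2)-1$ via Serre duality), and dominate $B^{\beta-\alpha}$ by the resulting projective bundle. One small point: you need not invoke Theorem~\ref{thm-existence} for nonemptiness of the semistable locus in the parameter space, since nonemptiness of $B^{\beta-\alpha}$ is the hypothesis and Theorem~\ref{thm-defAlpha} already guarantees every such $E$ arises from your family.
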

\begin{proof}
   If $\chi \geq \beta-\alpha$, then $B^{\beta-\alpha} = M(r,\mu,\chi)$ and there is nothing to prove.  So instead we focus on the case $\chi < \beta - \alpha$.  Then every sheaf with at least $\beta-\alpha$ sections has nonzero $h^1$, so must fail to be globally generated in codimension $1$ by Theorem \ref{thm-h1}.  Thus Theorem \ref{thm-defAlpha} describes $E$.  We may as well assume $r\geq 2$ as the $r=1$ case is easy.  If we let $b = \beta-\alpha-\chi+1$, then every $E\in B^{\beta-\alpha}$ must fit in an exact sequence $$0\to S\to E\to \OO_L(-b)\to 0.$$

Keeping the notation from Theorem \ref{thm-defAlpha} (1), let $$U\subset \Hom(\OO(\alpha-2)^{a-1},\OO(\alpha-1)^{a+r-1}) \times \P^{2*}$$ be the open subset parameterizing pairs $(\phi,L)$ where $\phi$ is a homomorphism with locally free cokernel $S_\phi$.  The dimension \begin{align*}\ext^1(\OO_L(-b),S_\phi) &= \ext^1(S_\phi,\OO_{L}(-b-3)) \\&= h^1(S_\phi^*(-b-3)|_L)\\&=h^0(S_\phi(b+1)|_L) \\&= r(\mu+b+2)-1\end{align*} is independent of the choice of $(\phi,L)\in U$, so over $U$ we can construct a projective space bundle $X\to U$ whose fiber over $(\phi,L)\in U$ is $\P\Ext^1(\OO_L(-b),S_\phi)$.  This space carries a universal extension bundle, and if $B^{\beta-\alpha}$ is nonempty, then the general sheaf parameterized in this way is in $B^{\beta-\alpha}$.  Thus the induced rational mapping $X\dashrightarrow M(r,\mu,\chi)$ dominates the locus $B^{\beta-\alpha}$, and $B^{\beta-\alpha}$ is irreducible.
\end{proof}

\subsection{Existence}

We now investigate when the Brill-Noether locus $B^{\beta-\alpha} \subset M(r,\mu,\chi)$ is nonempty.  The following theorem summarizes the results in this subsection.  We remind the reader that $\varphi$ is the golden ratio $(1+\sqrt{5})/2$.

\begin{theorem}\label{thm-existence}
Let $\mu> 0$ be such that $\mu = \alpha-1+\frac{a}{r}$ with $0< a<r$, and assume $\chi<\beta-\alpha$.
\begin{enumerate}
\item If $\frac{a}{r} > \varphi-1$, then $B^{\beta-\alpha} \subset M(r,\mu,\chi)$ is nonempty.
\item If $\frac{a}{r} < \varphi-1$, let $F_i$ be the Fibonacci bundle such that   $\mu(F_{i-1}) < \frac{a-1}{r} \leq \mu(F_i)$.
\begin{enumerate}
\item If $\frac{a}{r} \leq \mu(F_i)$, then $B^{\beta-\alpha}\subset M(r,\mu,\chi)$ is empty.
\item If $\frac{a}{r} > \mu(F_i)$, then $B^{\beta-\alpha} \subset M(r,\mu,\chi)$ is nonempty if $\chi \ll 0$.
\end{enumerate}
\end{enumerate}
\end{theorem}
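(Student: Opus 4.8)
The plan is to reduce the existence question for $B^{\beta-\alpha}\subset M(r,\mu,\chi)$ to a question about semistability of extensions, using the classification from Theorem \ref{thm-defAlpha}. Since $\chi<\beta-\alpha$, any $E\in B^{\beta-\alpha}$ must fail to be globally generated in codimension $1$, so by Theorem \ref{thm-defAlpha}(1) it fits into an exact sequence $0\to S\to E\to \OO_L(-b)\to 0$ where $S$ is a twisted Steiner bundle of the stated form and $b=\beta-\alpha-\chi+1>1$. Conversely, any such extension $E$ automatically has $h^0(E)\geq h^0(S)=\beta-\alpha$, so it lies in $B^{\beta-\alpha}$ provided $E$ is Gieseker semistable. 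Thus $B^{\beta-\alpha}$ is nonempty if and only if some extension of this form yields a semistable sheaf, and by the irreducibility argument of Corollary \ref{cor-irr} it suffices to check whether a \emph{general} such extension (for suitable $b$, i.e.\ suitable $\chi$) is semistable. The entire theorem therefore becomes a stability analysis of the extension sheaf $E$ built from $S$ and $\OO_L(-b)$.

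The key dichotomy is controlled by the stability of the Steiner bundle $S$ itself, whose slope is $\mu'=\mu-\frac1r=\alpha-1+\frac{a-1}{r}$; so the relevant fractional part is $\frac{a-1}{r}$, which explains why the Fibonacci comparison in the theorem is phrased in terms of $\frac{a-1}{r}$. First I would handle case (1): when $\frac{a}{r}>\varphi-1$, a general Steiner bundle $S$ (with the reduced parameters $a-1$, $r$) is itself $\mu$-stable by Remark \ref{rem-stabSteiner}, since $\frac{a-1}{r}$ is still above $\varphi-1$ or can be arranged to be; I would then show that a general extension by $\OO_L(-b)$ remains stable by a standard argument: any destabilizing subsheaf $F\subset E$ either maps to zero in $\OO_L(-b)$, hence lands in $S$ and is controlled by the stability of $S$, or surjects onto a subsheaf of $\OO_L(-b)$, in which case a slope comparison using $c_1(E)=c_1(S)+1$ and the largeness of $b$ rules out destabilization. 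This realizes $E$ as a semistable sheaf in $M(r,\mu,\chi)$, giving nonemptiness.

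For case (2), where $\frac{a}{r}<\varphi-1$, the bundle $S$ is no longer stable but is a direct sum of consecutive Fibonacci bundles $F_{i-1}^{\oplus m}\oplus F_i^{\oplus n}$ by Remark \ref{rem-stabSteiner}. The obstruction to $E$ being semistable is precisely the Fibonacci subbundles $F_i$ of $S$: the composition $F_i\hookrightarrow S\hookrightarrow E$ can destabilize $E$. In subcase (2a), where $\frac{a}{r}\leq\mu(F_i)$, I expect the slope $\mu(F_i)$ of this destabilizing subbundle to be at least $\mu(E)=\mu$ (after accounting for the twist), so that $F_i\subset E$ violates semistability for \emph{every} extension, forcing $B^{\beta-\alpha}=\emptyset$; this is the rigid part and follows from a direct slope inequality independent of $b$. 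In subcase (2b), where $\frac{a}{r}>\mu(F_i)$, the Fibonacci subbundle no longer destabilizes on slope grounds, and I would argue that for $b\gg0$ (equivalently $\chi\ll0$) a general extension becomes stable: increasing $b$ raises $c_1(\OO_L(-b))$'s contribution and tightens the slope balance, while the general extension sufficiently ``mixes'' the Fibonacci summands so that no subsheaf inherited from $S$ splits off. The main obstacle will be subcase (2b): controlling stability of a general extension when $S$ is a nontrivial sum of non-isomorphic exceptional bundles requires showing that the universal extension over the space $X$ from Corollary \ref{cor-irr} has stable general member, which I would approach by a careful case analysis of potential destabilizing subsheaves combined with the vanishing and dimension computations for $\Ext^1(\OO_L(-b),S)$ already recorded there, making $b\gg0$ do the work.
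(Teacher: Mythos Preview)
Your overall reduction to the extension picture is right for cases (2a) and (2b), and your argument for (2a) matches the paper.  But there is a genuine gap in your treatment of case (1), and your plan for (2b) misses the key idea.

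\textbf{Case (1).}  You claim that when $\frac{a}{r}>\varphi-1$ the general Steiner bundle $S$ of slope $\frac{a-1}{r}$ is $\mu$-stable, ``since $\frac{a-1}{r}$ is still above $\varphi-1$ or can be arranged to be.''  This is false: $a$ and $r$ are fixed, and there is a whole range $\varphi-1<\frac{a}{r}$ with $\frac{a-1}{r}\leq \varphi-1$ in which $S$ is \emph{not} stable but a direct sum $F_{i-1}^{\oplus m}\oplus F_i^{\oplus n}$.  Your ``standard argument'' then fails at the very first step.  Even when $S$ is stable, you invoke ``the largeness of $b$'' to handle destabilizing subsheaves that map nontrivially to $\OO_L(-b)$; but case (1) asserts nonemptiness for \emph{every} $\chi<\beta-\alpha$, i.e.\ every $b\geq 2$, not only $b\gg 0$.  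The paper sidesteps both problems by abandoning the extension construction entirely in case (1): it starts with a $\mu$-stable twisted Steiner bundle $F$ of slope $\mu$ (not $\mu-\frac1r$), which exists precisely because $\frac{a}{r}>\varphi-1$, and builds $E$ as the kernel of a surjection $F\to\OO_Z$ onto a length-$(\beta-\chi)$ scheme of collinear points, the map factoring as $F\to F|_L\to \OO_L(\alpha-1)\to \OO_Z$.  Kernels of maps to zero-dimensional sheaves preserve $\mu$-stability, and a direct rank computation on $H^0$ shows $h^0(E)=\beta-\alpha$.  This works uniformly in $\chi$.

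\textbf{Case (2b).}  Your plan (``the general extension sufficiently mixes the Fibonacci summands'') is too vague to be a proof.  The paper's argument has two concrete pieces.  First, a structural criterion: for a nonsplit extension $E$ with $\mu(E)>\mu(F_i)$, one shows that $E$ is semistable if and only if $\Hom(E,F_j)=0$, where $F_j$ is the Fibonacci bundle of largest slope below $\mu(E)$.  The nontrivial direction is proved by taking a destabilizing quotient $Q$ of $E$, running a snake-lemma diagram against the extension to show that both the kernel in $S$ and the quotient $Q$ are themselves Steiner bundles, and hence that $Q$ admits a Fibonacci quotient $F_k$ with $k\leq j$; mutation then promotes this to a nonzero map $E\to F_j$.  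Second, a general vanishing lemma: if $A,B$ are bundles with $\Hom(A,B(-1))=0$, then for $b\gg 0$ a general extension $0\to A\to E\to \OO_L(-b)\to 0$ has $\Hom(E,B)=0$.  This is proved by restricting to $L$ and observing that the relations among entries of a general section of $A(b+1)|_L$ have degree growing with $b$, eventually exceeding the bounded degrees appearing in $\Hom(A|_L,B|_L)$.  Applying the lemma with $A=S$ and $B=F_j$ finishes (2b).
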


Throughout this subsection we let $\mu > 0$ be a non-integer and write it as $\mu = \alpha-1+\frac{a}{r}$, and assume $\chi < \beta-\alpha$.  When there is a $\mu$-stable twisted Steiner bundle of rank $r$ and slope $\mu$, the existence problem is easy and taken care of by the following proposition. This proves case (1) of Theorem \ref{thm-existence}.

\begin{proposition}
Assume that $\frac{a}{r} > \varphi-1.$  Then  $B^{\beta-\alpha}\subset M(r,\mu,\chi)$ is nonempty. 
\end{proposition}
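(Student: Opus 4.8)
The plan is to exploit the hypothesis $\frac{a}{r} > \varphi - 1$, which by Remark \ref{rem-stabSteiner} guarantees the existence of a $\mu$-stable twisted Steiner bundle $S$ of rank $r$ and slope $\mu = \alpha - 1 + \frac{a}{r}$. Indeed, the general such bundle sits in a resolution $0 \to \OO(\alpha-2)^a \to \OO(\alpha-1)^{a+r} \to S \to 0$ and is $\mu$-stable precisely because $\frac{a}{r} > \varphi - 1$. This bundle is the deficiency-$0$ sheaf we want to destabilize by an elementary transformation, exactly matching the shape $0 \to S \to E \to \OO_L(-b) \to 0$ predicted by Theorem \ref{thm-defAlpha}(1). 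Wait—I should be careful: the $S$ appearing there has resolution with exponents $a-1$ and $a+r-1$, not $a$ and $a+r$. So the relevant Steiner bundle to build extensions from is the one of slope $\mu' = \mu - \frac{1}{r} = \alpha - 1 + \frac{a-1}{r}$. Since $\frac{a}{r} > \varphi - 1$ and $a < r$, I must check $\frac{a-1}{r} > \varphi - 1$ is \emph{not} needed; rather I just need this $S$ to be semistable so that the extension $E$ can be semistable.

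First I would fix the target Chern character: set $b = \beta - \alpha - \chi + 1 > 0$, which is forced by the requirement that $E$ have Euler characteristic $\chi$ in the sequence $0 \to S \to E \to \OO_L(-b) \to 0$, since $\chi(\OO_L(-b)) = 1 - b$ and $\chi(S) = \beta - \alpha$ (as $S$ has deficiency $0$ and $h^1 = 0$ by Proposition \ref{prop-Steiner}). Then I would take a general $\mu$-stable twisted Steiner bundle $S$ of slope $\mu'$, a general line $L$, and form a general extension $E$ in $\Ext^1(\OO_L(-b), S)$, whose dimension I computed in Corollary \ref{cor-irr} to be $r(\mu + b + 2) - 1$, which is positive for $b \gg 0$ (and in fact for all relevant $b$). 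The key point is that $E$ is a bundle with $h^0(E) \geq h^0(S) = \beta - \alpha$, so $E$ lies in $B^{\beta-\alpha}$ provided it is Gieseker semistable and lies in the correct moduli space.

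The main obstacle, therefore, is establishing Gieseker semistability of the general extension $E$. Since $S$ is $\mu$-stable of slope $\mu' < \mu$, and the quotient $\OO_L(-b)$ is a torsion sheaf, the bundle $E$ has the same rank and slope as $S$ shifted appropriately; the worry is that some subsheaf of $E$ could have larger reduced Hilbert polynomial. I would argue that because $S$ is $\mu$-stable, any destabilizing subsheaf $G \subset E$ would have to interact nontrivially with both $S$ and the quotient $\OO_L(-b)$, and a general extension avoids the splitting that would permit this. Concretely, I expect to show that the image of $G$ in $\OO_L(-b)$ must be nonzero (else $G \subset S$ contradicts $\mu$-stability of $S$ after a slope comparison), and then that generality of the extension class prevents $G$ from lifting the torsion quotient—this is the standard mechanism by which general extensions of a torsion sheaf by a stable bundle remain stable. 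The cleanest route is likely to observe that $S$ is a general bundle in a positive-dimensional moduli space, so $E$ has the Chern character of a stable sheaf and a general deformation argument, combined with the irreducibility from Corollary \ref{cor-irr}, places $E$ in $M(r,\mu,\chi)$.

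An alternative and perhaps safer approach would avoid semistability of extensions entirely: since $\frac{a}{r} > \varphi - 1$, I would instead show directly that a \emph{general} Gieseker semistable sheaf $E \in M(r,\mu,\chi)$ with $\chi < \beta - \alpha$ can be arranged to have $h^0(E) = \beta - \alpha$, by verifying that the Chern character is one for which stable sheaves exist (via Dr\'ezet--Le Potier) and that the extension construction produces a member of this moduli space. Either way, the heart of the argument is the semistability verification, and I would expect to handle it by the elementary-transformation-of-a-stable-bundle technique, using the positivity of $\ext^1(\OO_L(-b), S)$ to guarantee that generic extensions do not split off the destabilizing subobjects that membership in $B^{\beta-\alpha}$ would otherwise be obstructed by.
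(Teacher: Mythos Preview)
Your approach has a genuine gap, and the paper takes a different and much simpler route that sidesteps it.

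You build $E$ as an extension $0\to S\to E\to \OO_L(-b)\to 0$ with $S$ the twisted Steiner bundle of slope $\mu' = \mu - \frac{1}{r}$, and you then need to prove that a general such $E$ is Gieseker semistable.  You acknowledge that $\frac{a-1}{r}>\varphi-1$ need not hold, but you still assert that $S$ ``is a general bundle in a positive-dimensional moduli space.''  That is false in the edge case $\frac{a-1}{r}\leq \varphi-1<\frac{a}{r}$: by Remark~\ref{rem-stabSteiner} the general $S$ is then a direct sum $F_{i-1}^{\oplus m}\oplus F_i^{\oplus n}$ of Fibonacci bundles and is not $\mu$-stable.  In that regime, proving semistability of a general extension by $\OO_L(-b)$ is exactly the delicate analysis carried out later in \S\ref{sec-alpha} via the criterion $\Hom(E,F_j)=0$, and even there the paper only obtains the conclusion for $\chi\ll 0$.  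So your sketch does not cover all $\chi<\beta-\alpha$, and the hand-waved ``general extension avoids splitting'' argument is precisely the hard part.  Your alternative route is also off: the \emph{general} sheaf in $M(r,\mu,\chi)$ with $\chi<\beta-\alpha$ has $h^0<\beta-\alpha$ by G\"ottsche--Hirschowitz, so you cannot hope to land in $B^{\beta-\alpha}$ generically.

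The paper avoids all of this by working with the twisted Steiner bundle $F$ of slope $\mu$ (not $\mu'$), which \emph{is} $\mu$-stable by the hypothesis $\frac{a}{r}>\varphi-1$.  It then constructs the desired sheaf as a \emph{subsheaf} of $F$ rather than an extension: choose a line $L$, a quotient $F|_L\twoheadrightarrow \OO_L(\alpha-1)$, and $\beta-\chi$ points $Z\subset L$; let $F'=\ker(F\to \OO_Z)$.  Since $\OO_Z$ is zero-dimensional, $F'$ has the same rank and slope as $F$, so any slope-destabilizing subsheaf of $F'$ would already destabilize $F$; thus $F'$ is $\mu$-stable for free.  One then checks $h^0(F')=\beta-\alpha$ by observing that the map $H^0(F)\to H^0(\OO_Z)$ factors through $H^0(\OO_L(\alpha-1))\cong \CC^\alpha$ and has rank exactly $\alpha$.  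This kernel construction is the key idea you are missing.
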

\begin{proof}
We use a $\mu$-stable twisted Steiner bundle of slope $\mu$ and rank $r$ to construct a $\mu$-stable sheaf $E$ with $\beta-\alpha$ sections directly.
Let $F$ be a general twisted Steiner bundle of the form $$0\to \OO(\alpha-2)^a\to \OO(\alpha-1)^{a+r} \to F\to 0.$$ Then $h^0(F) = \beta = \chi(F)$ and the assumption $\frac{a}{r} > \varphi-1$ implies that $F$ is $\mu$-stable.  We now modify $F$ to decrease the Euler characteristic to $\chi$ and the number of sections to $\beta-\alpha$.

Since $F$ has slope between $\alpha-1$ and $\alpha$ and $F(-\alpha+1)$ is globally generated, the restriction $F|_L$ splits as a direct sum of line bundles on $L$ where at least one of the line bundles is $\OO_L(\alpha-1)$.  Thus there is a quotient $$F|_L \to \OO_L(\alpha-1).$$  Let $Z$ be a collection of $\beta-\chi$ points on $L$, and let $$\OO_L(\alpha-1)\to \OO_Z$$  be the restriction.  Let $\phi:F\to \OO_Z$ be the composition $$F\to F|_L\to \OO_L(\alpha-1)\to \OO_Z.$$ Then we consider the kernel $F'$ given by $$0\to F'\to F\fto\phi \OO_Z\to 0.$$ We observe $\chi(F') = \chi$ and $\mu(F') = \mu$.  Since $F$ is $\mu$-stable, it is easy to see that $F'$ is also $\mu$-stable, as a destabilizing subsheaf of $F'$ would also destabilize $F$.

Finally we verify that $h^0(F') = \beta - \alpha$.  Since $h^0(F) = \beta$, this amounts to showing that the map on global sections $H^0(F) \to H^0(\OO_Z)$ has rank $\alpha$.  But this map is the composition of maps $$H^0(F) \onto H^0(F|_L) \onto H^0(\OO_L(\alpha-1)) \hookrightarrow H^0(\OO_Z),$$ of which the first two maps are surjective (since $H^1(F(-1)) = 0$) and the last is injective, as indicated.  Since $h^0(\OO_L(\alpha-1)) = \alpha$, the rank is $\alpha$.
\end{proof}

The case where $0 < \frac{a}{r} < \varphi-1$ is more challenging.  If we write $b=\beta-\alpha-\chi+1$, then Theorem \ref{thm-defAlpha} (1) shows that any $E\in B^{\beta-\alpha}$ must fit as an extension of the form 
$$0\to S\to E\to \OO_L(-b)\to 0,$$ where $S$ is a twisted Steiner bundle of the form $$0\to \OO(\alpha-2)^{a-1}\to \OO(\alpha-1)^{a+r-1}\to S\to 0.$$  Thus the existence problem is equivalent to determining when there are \emph{semistable} sheaves $E$ described by extensions of this form.  Since all these extensions can be parameterized by an irreducible variety, the openness of semistability shows that this is equivalent to the general extension $E$ being semistable.  Thus we seek to understand when $E$ is semistable if the bundle $S$, the line $L$, and the extension class $e\in \Ext^1(\OO_L(-b),S)$ are all general.

We can first show that $E$ is always torsion-free if the extension is nontrivial.

\begin{lemma}\label{lem-torsionFree}
Let $$0\to S\to E\to \OO_L(-b)\to 0$$ be a non-split extension.  Then $E$ is torsion-free.
\end{lemma}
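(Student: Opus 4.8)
The plan is to show that the kernel sheaf $S$ being torsion-free forces $E$ to be torsion-free, using the non-splitting hypothesis to rule out the one potentially problematic torsion subsheaf. First I would record the setup: $S$ is a twisted Steiner bundle, hence locally free and in particular torsion-free, while $\OO_L(-b)$ is a pure $1$-dimensional sheaf supported on the line $L$. Suppose for contradiction that $E$ has nonzero torsion, and let $T\subset E$ be its maximal torsion subsheaf. Since $S$ is torsion-free, the composite $T\hookrightarrow E$ must map to zero under no nontrivial map into $S$; more precisely $T\cap S = 0$ because $T\cap S$ would be a torsion subsheaf of the torsion-free sheaf $S$, hence zero. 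Therefore the projection $E\to \OO_L(-b)$ restricts to an injection $T\hookrightarrow \OO_L(-b)$.

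The key step is to analyze this injection $T\hookrightarrow \OO_L(-b)$. Since $\OO_L(-b)$ is a pure $1$-dimensional sheaf (a line bundle on the integral curve $L$), any nonzero subsheaf of it is again pure of dimension $1$, so $T$ has $1$-dimensional support contained in $L$. I would then observe that a nonzero torsion subsheaf $T$ of $\OO_L(-b)$ that lifts to $E$ would produce a splitting. Indeed, the most efficient approach is to use the fact that $\OO_L(-b)$ has no proper quotient with the same support behaving like a direct summand unless the extension splits. Concretely, if $T\hookrightarrow E$ is a torsion subsheaf mapping isomorphically onto a subsheaf of $\OO_L(-b)$, I would push toward showing $T$ maps onto all of $\OO_L(-b)$: since $\OO_L(-b)$ is a line bundle on $L$, any nonzero subsheaf $T'\subset \OO_L(-b)$ is of the form $\OO_L(-b)\otimes I_W$ for some $0$-dimensional $W\subset L$, i.e.\ $T' = \OO_L(-b-w)$ in the appropriate sense. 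The cleanest route is to argue that the existence of \emph{any} nonzero torsion subsheaf lifting the inclusion splits off a piece of the extension, contradicting non-splitness.

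The sharpest version of the argument is this: the maximal torsion subsheaf $T$ injects into $\OO_L(-b)$, and the quotient $\OO_L(-b)/T$ is then either $0$ or $0$-dimensional. If $\OO_L(-b)/T$ were $0$-dimensional, then $\OO_L(-b)$ would have a $0$-dimensional quotient, which is impossible for a pure $1$-dimensional sheaf unless $T = \OO_L(-b)$; in fact $T\hookrightarrow \OO_L(-b)$ with $0$-dimensional cokernel would force $T\cong\OO_L(-b')$ for some $b'$, but purity of $\OO_L(-b)$ rules out a proper subsheaf with $0$-dimensional cokernel having \emph{the same} $1$-dimensional support only when the cokernel is genuinely $0$-dimensional, which does occur for ideal-sheaf-type subsheaves. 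So I would instead directly conclude $T = \OO_L(-b)$: the composite $T\to \OO_L(-b)$, being an injection of $1$-dimensional sheaves on the integral curve $L$, combined with the surjectivity of $E\to\OO_L(-b)$ and the vanishing $T\cap S=0$, shows that $E \cong S\oplus T$ as the inclusion $S\hookrightarrow E$ and $T\hookrightarrow E$ together hit everything, giving a splitting $s\colon \OO_L(-b)\to E$ of the sequence.

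The main obstacle I anticipate is cleanly identifying $T$ with all of $\OO_L(-b)$ rather than a proper subsheaf, and translating the lift of $\OO_L(-b)$ into an honest splitting of the extension. The conceptual resolution is that a torsion subsheaf $T\subset E$ with $T\cap S=0$ gives a section of $E\to\OO_L(-b)$ over the image of $T$; the purity of $\OO_L(-b)$ prevents $T$ from being a proper subsheaf whose presence is compatible with non-splitting, because any such lift extends (using that $\Ext^1$ obstructions vanish on the torsion part over the integral curve $L$) to a splitting of the whole sequence. Thus nonzero torsion in $E$ is equivalent to the extension class being zero, and the non-split hypothesis forces $E$ to be torsion-free.
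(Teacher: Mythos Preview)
Your argument has a genuine gap in the case where the torsion subsheaf $T$ is a \emph{proper} subsheaf of $\OO_L(-b)$, i.e.\ $T\cong\OO_L(-c)$ with $c>b$.  You try to rule this out by asserting that purity of $\OO_L(-b)$ forbids $0$-dimensional quotients, but that is false: purity forbids $0$-dimensional \emph{subsheaves}, not quotients, and indeed $\OO_L(-b)/\OO_L(-c)$ is a perfectly good $0$-dimensional quotient of length $c-b$.  Your subsequent attempt to deduce $E\cong S\oplus T$ from $T\cap S=0$ alone is also unjustified: one would need $S+T=E$, which is exactly the surjectivity of $T\to\OO_L(-b)$ that you have not established.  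The closing appeal to ``$\Ext^1$ obstructions vanishing on the torsion part'' is too vague to carry weight.

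The paper handles the problematic case $c>b$ by passing to the quotient $E'=E/T$, which is torsion-free and sits in an exact sequence $0\to S\to E'\to\OO_Z\to 0$ with $\OO_Z$ zero-dimensional of length $c-b>0$.  Twisting by $\OO(-d)$ for $d\gg0$ and using that $S$ is locally free (so $H^1(S(-d))=0$ by Serre duality and Serre vanishing) together with $H^0(E'(-d))=0$ (since $E'$ is torsion-free) forces $H^0(\OO_Z)=0$, a contradiction.  Equivalently, one can observe directly that $\Ext^1(\OO_Z,S)=0$ for $S$ locally free on a smooth surface, so the sequence splits and $E'$ would acquire torsion.  Either way, this is the missing step your proposal needs.
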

\begin{proof}
Suppose the torsion subsheaf $T$ of $E$ is nonzero, and let $E' = E/T$.  Since $S$ is a bundle, the composition $T\to E\to \OO_L(-b)$ is injective with some image $\OO_L(-c) \subset \OO_L(-b)$.    If $c=b$ then the sequence splits, so suppose $c > b$.  Then the snake lemma gives the following diagram:
$$
\xymatrix{& & 0\ar[d] & 0\ar[d]\\
& & T \ar@{=}[r]\ar[d]& \OO_L(-c) \ar[d] & \\
0\ar[r]&S\ar[r]\ar@{=}[d] & E \ar[r]\ar[d] & \OO_L(-b)\ar[r]\ar[d]& 0\\
0\ar[r]&S\ar[r]&E'\ar[r]\ar[d] & \OO_Z \ar[r]\ar[d] &0\\
&&0&0
}
$$
Here $\OO_Z$ is zero-dimensional of length $c-b$.  Tensoring the sequence $$0\to S\to E'\to \OO_Z\to 0$$ by $\OO(-d)$ for large $d$, taking cohomology, and using Serre duality and Serre vanishing contradicts that $c-b>0$.
\end{proof}

Now we fix a non-split extension $E$.  For studying semistability, we may as well twist by $\OO(-\alpha+1).$ Thus we may assume $\alpha = 1,$ $\mu(E) = \frac{a}{r}$,  and $S$ is a general bundle of the form $$0\to \OO(-1)^{a-1}\to \OO^{a+r-1}\to S\to 0.$$  We have $0\leq \frac{a-1}{r}< \varphi-1$, so there is a Fibonacci bundle $F_i$ such that $\mu(F_{i-1}) < \mu(S) \leq \mu(F_{i})$ (see Remark \ref{rem-stabSteiner}).  Then there are integers $m\geq 0$ and $n> 0$ such that $S \cong F_{i-1}^{\oplus m} \oplus F_{i}^{\oplus n}$.  We then have the following negative result, which proves Theorem \ref{thm-existence} (2(a)).

\begin{proposition}
We use the notation of the previous paragraph.  If $\mu(E) \leq \mu(F_i)$, then $E$ is never semistable.
\end{proposition}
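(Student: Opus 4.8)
The plan is to exhibit an explicit destabilizing subsheaf of $E$, namely the summand $F_i^{\oplus n}$ of $S$. Since the extension gives an inclusion $S\hookrightarrow E$ and the decomposition $S\cong F_{i-1}^{\oplus m}\oplus F_i^{\oplus n}$ realizes $F_i^{\oplus n}$ as a subbundle of $S$, we obtain a subsheaf $F_i^{\oplus n}\subset E$ of slope $\mu(F_i)$. The first step is to check this is a \emph{proper} subsheaf of smaller rank. Because $S$ and $E$ have the same rank $r$ while $c_1(E)=c_1(S)+1$, we have $\mu(S)=\mu(E)-\frac{1}{r}$. The hypothesis $\mu(E)\leq\mu(F_i)$ then forces $\mu(S)<\mu(F_i)$ strictly, so the decomposition must contain at least one $F_{i-1}$ factor; that is, $m>0$. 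Consequently $\rk(F_i^{\oplus n})=n\,f_{2i+1}<r$, and $F_i^{\oplus n}$ is a proper subsheaf of $E$.

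The next step is a slope comparison, which splits into two cases. If $\mu(F_i)>\mu(E)$, then $F_i^{\oplus n}$ is a proper subsheaf of strictly larger slope, so $E$ is not $\mu$-semistable; by the implication (semistable $\Rightarrow$ $\mu$-semistable) recorded in the preliminaries, $E$ is not Gieseker semistable either, and we are done in this case. The only remaining possibility is the borderline equality $\mu(F_i)=\mu(E)$, i.e. $\frac{a}{r}=\frac{f_{2i}}{f_{2i+1}}$. Here the subsheaf $F_i^{\oplus n}$ has the \emph{same} slope as $E$, so $\mu$-stability alone cannot detect it, and one must pass to Gieseker stability.

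For the equality case the idea is to compare reduced Hilbert polynomials via Euler characteristics, using that $F_i$ is exceptional of minimal discriminant. Working with $\alpha=1$ (permissible since the statement is already stated after twisting by $\OO(-\alpha+1)$, and semistability is twist-invariant), the Steiner resolution gives $\chi(S)=a+r-1$, hence $\chi(E)=\chi(S)+\chi(\OO_L(-b))=(a+r-1)+(1-b)=a+r-b$. Therefore
$$\frac{\chi(E)}{r}=\frac{a}{r}+1-\frac{b}{r}=\frac{f_{2i}}{f_{2i+1}}+1-\frac{b}{r}=\frac{f_{2i+2}}{f_{2i+1}}-\frac{b}{r}.$$
On the other hand $\chi(F_i)=f_{2i+2}$, so $\frac{\chi(F_i)}{\rk(F_i)}=\frac{f_{2i+2}}{f_{2i+1}}$, giving
$$\frac{\chi(F_i^{\oplus n})}{\rk(F_i^{\oplus n})}-\frac{\chi(E)}{r}=\frac{f_{2i+2}}{f_{2i+1}}-\left(\frac{f_{2i+2}}{f_{2i+1}}-\frac{b}{r}\right)=\frac{b}{r}>0,$$
since $b=1+h^1(E)>0$ by Theorem \ref{thm-defAlpha}. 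For sheaves of equal slope on $\P^2$ the reduced Hilbert polynomials are ordered by $\chi/\rk$, so $p_{F_i^{\oplus n}}(m)>p_E(m)$ for $m\gg0$, and $F_i^{\oplus n}$ Gieseker-destabilizes $E$. In all cases $E$ is not semistable. The main obstacle is precisely this equality case: $\mu$-semistability is genuinely insufficient there, and the conclusion hinges on the fact that the Fibonacci bundle $F_i$ has the maximal Euler characteristic (minimal discriminant) for its rank and slope, making its reduced Hilbert polynomial strictly larger than that of the extension $E$.
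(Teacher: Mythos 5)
Your proof is correct, and in the borderline case it takes a genuinely different (and more direct) route than the paper. The strict-inequality case coincides with the paper's: the Fibonacci summand $F_i\subset S\subset E$ slope-destabilizes $E$. When $\mu(E)=\mu(F_i)$, the paper argues that a Gieseker-semistable $E$ of that slope would have to be a direct sum of copies of $F_i$ (implicitly using that the exceptional bundle $F_i$ has maximal $\chi/r$ among semistable sheaves of its slope, together with $\Ext^1(F_i,F_i)=0$), and then gets a contradiction from $h^0(E)=\beta-\alpha\neq\beta$. You instead compute $\chi(E)=a+r-b$ straight from the defining extension and observe that $\chi(F_i)/\rk(F_i)-\chi(E)/r=b/r>0$, so $F_i^{\oplus n}$ already Gieseker-destabilizes $E$ outright; since slopes agree, comparing $\chi/\rk$ does compare reduced Hilbert polynomials by Riemann--Roch. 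Your version is more self-contained --- it avoids the structural classification of semistable sheaves with the invariants of $F_i^{\oplus k}$ and the appeal to the section count --- at the cost of the small bookkeeping step checking $m>0$ so that $F_i^{\oplus n}$ is a proper subsheaf, which you handle correctly (indeed $\mu(S)=\mu(E)-\tfrac{1}{r}<\mu(F_i)$ forces an $F_{i-1}$ summand). Both arguments ultimately rest on the same extremality of $F_i$, but yours makes the destabilization explicit rather than arguing by contradiction from rigidity.
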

\begin{proof}
Since $S \cong F_{i-1}^{\oplus m} \oplus F_i^{\oplus n}$ and $n>0$, we see that $F_i$ is a subbundle of $E$.  If $\mu(E)<\mu(F_i)$ then $F_i$ slope-destabilizes $E$.  If instead $\mu(E) = \mu(F_i)$ and $E$ is semistable, then $E$ would have to be a direct sum of copies of $F_i$.  But then $E$ would have $\beta$ sections instead of $\beta-\alpha$ sections.
\end{proof}

In the other case $\mu(E) > \mu(F_i)$, we develop a criterion to test the semistability of an extension.

\begin{proposition}
With the previous notation, suppose that $\mu(E) > \mu(F_i)$.  Let $F_j$ be the Fibonacci bundle of largest slope such that $\mu(F_j) < \mu(E)$.  Then $E$ is semistable if and only if $\Hom(E,F_j)=0$.
\end{proposition}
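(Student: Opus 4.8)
The \emph{only if} direction is the easy one and uses nothing about the special shape of $E$. Since $F_j$ is an exceptional bundle it is stable, and $\mu(F_j)<\mu(E)$. If $E$ were semistable and $\phi\colon E\to F_j$ nonzero, then $I=\operatorname{im}\phi$ is simultaneously a quotient of $E$ and a subsheaf of $F_j$. As $E$ is $\mu$-semistable, every quotient has slope $\geq\mu(E)$, so $\mu(I)\geq\mu(E)$; as $F_j$ is stable, every subsheaf (proper or of full rank) has slope $\leq\mu(F_j)$, so $\mu(I)\leq\mu(F_j)$. Together these give $\mu(E)\leq\mu(F_j)<\mu(E)$, a contradiction, so $\Hom(E,F_j)=0$.

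For the converse I would argue by contrapositive: assuming $E$ is not semistable, I will produce a nonzero map $E\to F_j$. Let $Q$ be the minimal Harder--Narasimhan quotient of $E$, so that $Q$ is semistable, $\mu(Q)=\mumin(E)<\mu(E)$, and $E\twoheadrightarrow Q$. It suffices to find a nonzero $\psi\colon Q\to F_j$, since precomposing $\psi$ with the surjection $E\twoheadrightarrow Q$ then produces a nonzero element of $\Hom(E,F_j)$. To produce $\psi$ I would use the Euler pairing. First, $\Ext^2(Q,F_j)\cong\Hom(F_j,Q(-3))^{*}$ by Serre duality, and this vanishes: the slope gap satisfies $\mu(E)-\mu(F_j)\leq\mu(E)-\mu(F_i)\leq 1/r<1$, so $\mu(Q)-3<\mu(E)-3<\mu(F_j)$, and no nonzero map can run from the stable $F_j$ to the semistable $Q(-3)$ of strictly smaller slope. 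Hence $\hom(Q,F_j)\geq\chi(Q,F_j)$, and it is enough to prove $\chi(Q,F_j)>0$.

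The heart of the matter, and the main obstacle, is this positivity. By Riemann--Roch on $\PP^2$, $\chi(Q,F_j)=\rk(Q)\,\rk(F_j)\bigl(P(\mu(F_j)-\mu(Q))-\Delta(Q)-\Delta(F_j)\bigr)$ with $P(x)=\tfrac12 x^2+\tfrac32 x+1$, so the sign is controlled by the slope and discriminant of the destabilizing quotient $Q$. The key input I would extract from the extension $0\to S\to E\to\OO_L(-b)\to 0$ with $S\cong F_{i-1}^{\oplus m}\oplus F_i^{\oplus n}$ is that the Harder--Narasimhan slopes of $E$ are confined to a narrow window just above $\mu(F_i)$: since $S\subseteq E$ has full rank with $\mumax(S)=\mu(F_i)$ and $E/S$ contributes only $c_1=1$, every subsheaf $E'\subseteq E$ satisfies $\mu(E')\leq\mu(F_i)+1/\rk(E')$, while the image of $S$ in any torsion-free quotient forces $\mumin(E)\geq\mu(F_{i-1})$. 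I would then combine these bounds with the Dr\'ezet--Le Potier numerical classification of exceptional bundles---in particular the fact that consecutive Fibonacci slopes are Farey neighbors, so that any semistable sheaf of slope strictly inside the gap $(\mu(F_j),\mu(F_{j+1}))$ has rank at least $f_{2j+1}+f_{2j+3}$ and discriminant bounded below by the Dr\'ezet--Le Potier curve---to rule out the dangerous configuration in which $\mu(Q)$ lies strictly between $\mu(F_j)$ and $\mu(E)$ with $\Delta(Q)$ too large (precisely the case where $P(\mu(F_j)-\mu(Q))<\Delta(Q)+\Delta(F_j)$ and $\chi(Q,F_j)\leq0$). Establishing that an unstable $E$ of this precise form must have $\mu(Q)\leq\mu(F_j)$, with $\Delta(Q)$ controlled tightly enough to force $\chi(Q,F_j)>0$, is where essentially all the work lies. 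A useful bookkeeping device throughout is the identification $\Hom(E,F_j)=\ker\bigl(\Hom(S,F_j)\xrightarrow{\,\cup e\,}\Ext^1(\OO_L(-b),F_j)\bigr)$ coming from the defining sequence, which recasts the vanishing $\Hom(E,F_j)=0$ as injectivity of cup product with the extension class $e$ and makes its interplay with semistability transparent.
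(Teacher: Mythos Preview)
Your \emph{only if} direction is fine and matches the paper.  The converse, however, has a real gap: your entire strategy rests on proving $\chi(Q,F_j)>0$, which by Riemann--Roch requires an \emph{upper} bound on the discriminant $\Delta(Q)$ of the destabilizing quotient.  You never establish one.  The Dr\'ezet--Le Potier classification you invoke gives \emph{lower} bounds on $\Delta$ for semistable sheaves of a given slope, which goes the wrong direction; and your rank bound for sheaves of slope in a Farey gap does not by itself rule out $\Delta(Q)$ being large.  A priori the minimal Harder--Narasimhan quotient of $E$ could have arbitrarily large discriminant (e.g.\ an ideal sheaf of many points, twisted), so there is genuine content here that your sketch does not supply.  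Your claimed bound $\mumin(E)\geq\mu(F_{i-1})$ is also not justified: the fact that $S$ has $\mumin(S)=\mu(F_{i-1})$ does not automatically transfer to quotients of $E$.

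The paper takes a completely different route which sidesteps the discriminant issue.  Rather than working numerically with $\chi(Q,F_j)$, it chooses $E'$ to be the maximal destabilizing subsheaf, sets $Q=E/E'$, and then uses the section-counting machinery developed earlier (Corollary~\ref{cor-sectionBound} and Corollary~\ref{cor-maxSteiner}) to show that $Q$ is itself a \emph{Steiner bundle}.  The argument runs through a careful analysis of the induced filtration on $S$ by $S'=\ker(E'\to\OO_L(-c))$: bounding $\mumax$, $\mumin$, and $h^0$ of each piece forces $h^0(Q)=r(Q)+c_1(Q)=\beta_{r(Q),\mu(Q)}$, whence $Q$ is Steiner.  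Once $Q$ is Steiner with $\mumin(Q)<\varphi-1$, it has a Fibonacci bundle $F_k$ as a quotient by Remark~\ref{rem-stabSteiner}, and the mutation sequences then propagate $\Hom(E,F_k)\neq0$ up to $\Hom(E,F_j)\neq0$.  In effect the paper \emph{proves} the sharpest possible upper bound on $\Delta(Q)$---namely that $Q$ has the minimal discriminant for its rank and slope---whereas you were hoping to get by with less.
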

\begin{proof} Clearly $E$ is not semistable if $\Hom(E,F_j)\neq 0$.  Suppose the extension $E$ $$0\to S\to E\to \OO_L(-b)\to 0$$ is not semistable, where $S \cong F_{i-1}^{\oplus m} \oplus F_i^{\oplus n}$ as above.  The assumption $\mu(E)>\mu(F_i)$ gives $\mu(E) > \mumax(S)=\mu(F_i)$. We define a destabilizing subsheaf $E'$ and the quotient $Q$ $$0 \to E'\to E \to Q\to 0$$ as follows.
 \begin{itemize}
 \item If $\mumin(E)=\mumax(E)$, then we let $E'\subset E$ be the maximal Gieseker destabilizing sheaf.  
\item If $\mumin(E)<\mumax(E)$, then we let $E'\subset E$ be the maximal $\mu$-destabilizing subsheaf. In this case we have $\mu(E') >  \mumax(Q)$.
 \end{itemize}

 The composition $E'\to E\to \OO_L(-b)$ must be nonzero, since $E'$ is not a subsheaf of $S$ by stability.  The image of this composition is thus a sheaf of the form $\OO_L(-c)$ for some $c\geq b$.  Let $S'$ be the kernel of $E'\to \OO_L(-c)$.  Then $S'$ is a subsheaf of $S$, and we let $S''$ be the cokernel of $S'\to S$.  We let $T$ be the cokernel of $\OO_L(-c)\to \OO_L(-b)$. The snake lemma then gives the exact sequence in the final row of the following diagram.
$$
\xymatrix{&0\ar[d] & 0\ar[d] & 0\ar[d]\\
0\ar[r]&S'\ar[r]\ar[d] & E' \ar[r]\ar[d]& \OO_L(-c) \ar[d]\ar[r] & 0\\
0\ar[r]&S\ar[r]\ar[d] & E \ar[r]\ar[d] & \OO_L(-b)\ar[r]\ar[d]& 0\\
0\ar[r]&S''\ar[r]\ar[d]&Q\ar[r]\ar[d] & T \ar[r]\ar[d] &0\\
&0&0&0
}
$$

\emph{Step 1: $E'$ has slope $\mu(E')\leq 1$.}  Recall that $E'$ is $\mu$-semistable.   We have $\mu(S') = \mu(E') - \frac{1}{r(E')}$.  Since $\mumax(S) = \mu(F_i)$, we find $\mumax(S') \leq \mu(F_i)$ and thus $\mu(S') \leq \mu(F_i)$.  Then $\mu(E') \leq \mu(F_i)+\frac{1}{r(E')}$.  If $r(E') \geq 3$ this immediately gives $\mu(E') < 1$ since $\mu(F_i) < \varphi-1$.  If instead $r(E')$ is $1$ or $2$, then additional integrality considerations show $\mu(E') \leq 1$.

\emph{Step 2: $S'$ has $0\leq \mu(S') \leq \mu(F_i)$ and $-1<\mumin(S')\leq \mumax(S') \leq \mu(F_i)$.}  As in the first part of the proof of Theorem \ref{thm-defAlpha}, we see that $\mumin(S') \geq \mu(E')-1>-1$ since $E'$ is semistable.  Since $S'$ is a subsheaf of $S$, it has $\mu(S') \leq \mumax(S') \leq \mumax(S) = \mu(F_i)$.  The inequality $0\leq \mu(S')$ follows from $\mu(S') = \mu(E') - \frac{1}{r(E')}$ and $\mu(E') \geq \mu(E) > 0$.

\emph{Step 3: $E'$ has $h^0(E') \leq r(E')+c_1(E')-1$.}  Since $c>0$, we have $h^0(E') = h^0(S')$.  From Step 2 and Corollary \ref{cor-sectionBound}, we get $$h^0(S') \leq \beta_{r(S'),\mu(S')} = r(S')+c_1(S') = r(E')+c_1(E') - 1.$$

\emph{Step 4: $Q$ has $\mumax(Q)<1$ and $h^0(Q) \leq r(Q)+c_1(Q)$.}  Since $S$ is globally generated, $E$ is generically globally generated, and $Q$ is generically globally generated.  Therefore $\mumin(Q) \geq 0$.  By construction we have $\mumax(Q) \leq \mu(E')$, and if $\mu(E')=1$, then the inequality is strict.  Therefore $\mumax(Q) <1$ due to Step 1.  Corollary \ref{cor-sectionBound} now implies $h^0(Q) \leq \beta_{r(Q),\mu(Q)}=r(Q)+c_1(Q).$

\emph{Step 5: In fact, $h^0(E') = r(E')+c_1(E')-1$ and $h^0(Q) = r(Q)+c_1(Q)$.} We have $$h^0(E) = h^0(S) = r(E) + c_1(E) -1,$$ but Steps 3 and 4 give 
$$h^0(E)\leq h^0(E')+h^0(Q) \leq r(E')+c_1(E')-1+r(Q)+c_1(Q)=r(E)+c_1(E)-1.$$ Therefore equality holds in Steps 3 and 4.

\emph{Step 6: $S''$ has $h^0(S'') = r(S'')+c_1(S'')$.}  Because $S'$ and $S$ have $h^0(S') = r(S')+c_1(S')$ and $h^0(S) = r(S) + c_1(S)$, we see that $$h^0(S'') \geq h^0(S)-h^0(S') = r(S'')+c_1(S'').$$ On the other hand, $r(S'') = r(Q)$ and $c_1(S'') = c_1(Q)$ so $$h^0(S'') \leq h^0(Q) = r(Q)+c_1(Q)=r(S'')+c_1(S'').$$
Thus equality holds.

\emph{Step 7: $S'$ and $S''$ are Steiner bundles, $T=0$, $Q \cong S''$, and $b=c$ .}  Since $S$ and $Q$ are torsion-free, so are $S'$ and $S''$. By Step 2, we have $\mu(S')\geq0$ and $-1<\mumin(S')\leq \mumax(S') <1$. Since $S$, and therefore $S''$, is globally generated, we get $\mumin(S'')\geq0$. Also, $\mumax(S'')\leq \mumax(Q)<1$ by Step 4. Corollary \ref{cor-maxSteiner} now shows that $S'$ and $S''$ are Steiner bundles.  If we twist the sequence $$0\to S''\to Q \to T\to 0$$ by $\OO(-d)$ with $d\gg 0$, then $H^0(Q(-d)) = H^1(S''(-d))=0$ since $S''$ is a vector bundle.  Therefore $H^0(T)=0$, which gives $T =0$ since $T$ is zero-dimensional.  At this point our diagram reduces to the following.
$$
\xymatrix{&0\ar[d] & 0\ar[d] \\
0\ar[r]&S'\ar[r]\ar[d] & E' \ar[r]\ar[d]& \OO_L(-b) \ar@{=}[d] \ar[r]& 0\\
0\ar[r]&S\ar[r]\ar[d] & E \ar[r]\ar[d] & \OO_L(-b)\ar[r]& 0\\
&Q\ar@{=}[r]\ar[d] & Q\ar[d]   \\
&0&0
}
$$

\emph{Step 8: We have $\Hom(E,F_j) \neq 0$.} Since $Q$ is a Steiner bundle and $\mumin(Q)<\varphi-1$, there is a quotient Fibonacci bundle $Q\onto F_k$ with $\mu(F_k) = \mumin(Q)$.  We therefore have $\Hom(E,F_k)\neq 0$.  If we have $\mu(F_k) = \mu(E)$, then $E\to F_k$ would not be a destabilizing quotient since $F_k$ has maximal $\chi/r$ among semistable sheaves its slope.  Therefore $\mu(F_k) < \mu(E)$ and $\mu(F_k) \leq \mu(F_j)$, so $k\leq j$.     The mutation sequences defining the Fibonacci bundles show that there are injections $F_{\ell}\hookrightarrow F_{\ell+1}^{\oplus 3}$ for each $\ell$.  Thus there is an injection $F_k\hookrightarrow F_{k+1}^{\oplus 3}$, and so $\Hom(E,F_{k+1})\neq 0$. Inducting, we get $\Hom(E,F_j) \neq 0$.
\end{proof}

The next lemma now completes the proof of Theorem \ref{thm-existence} (2(b)) when applied to $A= S$ and $B = F_j$.

\begin{lemma}
Suppose $A,B$ are bundles on $\P^2$ with $\Hom(A,B(-1))=0$.  If $b \gg 0$ and $$0\to A\to E \to \OO_L(-b)\to 0$$ is a general extension,  then $\Hom(E,B) = 0$.
\end{lemma}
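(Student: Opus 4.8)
The plan is to apply the functor $\Hom(-,B)$ to the extension and reduce the vanishing $\Hom(E,B)=0$ to the injectivity of a single connecting map, which I will then establish for a general extension class by a dimension count on an incidence variety. Applying $\Hom(-,B)$ to $0\to A\to E\to \OO_L(-b)\to 0$ gives the exact sequence
$$0\to \Hom(\OO_L(-b),B)\to \Hom(E,B)\to \Hom(A,B)\xrightarrow{\delta_e}\Ext^1(\OO_L(-b),B).$$
Since $\OO_L(-b)$ is a torsion sheaf and $B$ is locally free, the first term vanishes, so $\Hom(E,B)=\ker\delta_e$, where $\delta_e(\phi)=\phi_* e$ is the Yoneda product with the extension class $e\in\Ext^1(\OO_L(-b),A)$. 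Thus it suffices to show that $\delta_e$ is injective for a general choice of $e$.

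Next I would make the maps $\phi_*$ concrete. Using the resolution $0\to\OO(-b-1)\to\OO(-b)\to\OO_L(-b)\to 0$ together with Serre vanishing (valid since $b\gg0$), one identifies $\Ext^1(\OO_L(-b),A)\cong H^0(A|_L(b+1))$ and likewise for $B$, and under these identifications $\phi_*$ becomes the restriction map $H^0(\phi|_L\otimes\OO_L(b+1))$. The key input is then the hypothesis $\Hom(A,B(-1))=0$: since the ideal sheaf of $L$ is $\OO(-1)$, a homomorphism $\phi$ has $\phi|_L=0$ if and only if it factors through $B(-1)\hookrightarrow B$, i.e. lies in the image of $\Hom(A,B(-1))\to\Hom(A,B)$, which is zero. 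Hence every nonzero $\phi$ restricts to a nonzero bundle map $\phi|_L\colon A|_L\to B|_L$ on $L\cong\P^1$, so $\phi|_L$ has generic rank at least $1$, and therefore $\rk(\phi_*)=h^0(\mathrm{im}(\phi|_L)(b+1))$ grows at least linearly in $b$, uniformly over nonzero $\phi$.

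Finally I would run the dimension count. Set $V=\Ext^1(\OO_L(-b),A)$ and consider the incidence variety $I=\{([\phi],e)\in\P(\Hom(A,B))\times V : \phi_* e=0\}$; its second projection is proper because $\P(\Hom(A,B))$ is, so the locus of bad classes $Z=\{e:\ker\delta_e\neq 0\}=\pi_2(I)$ is closed. The fiber of $I$ over $[\phi]$ is $\ker(\phi_*)$, of codimension $\rk(\phi_*)$ in $V$, so
$$\dim I\ \le\ \dim\P(\Hom(A,B))+\dim V-\min_{\phi\neq 0}\rk(\phi_*)\ =\ (\hom(A,B)-1)+\dim V-\min_{\phi\neq 0}\rk(\phi_*).$$
Because the uniform lower bound on $\rk(\phi_*)$ tends to infinity with $b$, for $b\gg0$ it exceeds $\hom(A,B)$, forcing $\dim I<\dim V$. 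Then $Z\subsetneq V$ is a proper closed subset, a general $e$ lies outside $Z$, and for such $e$ the map $\delta_e$ is injective, giving $\Hom(E,B)=0$.

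I expect the main obstacle to be the uniform linear lower bound on $\rk(\phi_*)$: I must rule out that some nonzero $\phi$ restricts to zero on $L$ (this is exactly what $\Hom(A,B(-1))=0$ buys), and then check that the growth of $h^0(\mathrm{im}(\phi|_L)(b+1))$ in $b$ can be bounded below independently of $\phi$, using that the images $\mathrm{im}(\phi|_L)$ are subsheaves of the fixed bundle $B|_L$ of bounded rank and degree so that Serre vanishing applies uniformly. Everything else is formal homological algebra and a standard incidence-variety estimate.
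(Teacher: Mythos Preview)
Your proof is correct and follows essentially the same approach as the paper: both reduce to the injectivity of $\delta_e$, identify $\Ext^1(\OO_L(-b),A)\cong H^0(A(b+1)|_L)$, and use $\Hom(A,B(-1))=0$ to guarantee that every nonzero $\phi$ restricts nontrivially to $L$.  The only difference is in the endgame: you finish with an incidence-variety dimension count, while the paper argues directly that for $e$ a general vector of forms of large degree and $M$ any nonzero matrix of bounded degree one has $Me\neq 0$.
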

\begin{proof}
We apply $\Hom(-,B)$ to the sequence to get $$0\to \Hom(E,B)\to \Hom(A,B)\fto\delta \Ext^1(\OO_L(-b),B).$$ We want to prove $\delta$ is injective.  The boundary map $\delta$ arises from the product $$m:\Ext^1(\OO_L(-b),A) \te \Hom(A,B)\to \Ext^1(\OO_L(-b),B)$$ by $$\delta(\phi) = m(e\te \phi),$$ where $e\in \Ext^1(\OO_L(-b),A)$ is the extension class defining $E$.

The space $\Ext^1(\OO_L(-b),A)$ can be computed as
\begin{align*}
\Ext^1(\OO_L(-b),A) &\cong \Ext^1(A,\OO_L(-b-3))^*\\
&\cong H^1(A^*(-b-3)|_L)^*\\
&\cong H^0(A(b+1)|_L),
\end{align*}
and similarly for $B$.

The assumption $\Hom(A,B(-1))=0$ shows that we can interpret $\Hom(A,B)$ as a subspace of $\Hom(A|_L,B|_L)\cong \Hom(A(b+1)|_L,B(b+1)|_L).$  Thus we can interpret $m$ as arising from the restriction of the natural map $$\overline m : H^0(A(b+1)|_L)\te \Hom(A(b+1)|_L,B(b+1)|_L)\to H^0(B(b+1)|_L).$$ If there is an $e\in H^0(A(b+1)|_L)$ such that contraction with $e$ gives an injective map \begin{align*}
\Hom(A(b+1)|_L,B(b+1)|_L)&\to H^0(B(b+1)|_L)
\end{align*}
then the corresponding $\delta$ will be injective and we are done.  Decomposing $A$ and $B$ into direct sums of line bundles, the vector $e$ can be viewed as a vector of homogeneous forms of some degrees that are increasing linearly with $b$.  A nonzero element $M\in \Hom(A(b+1)|_L,B(b+1)|_L)$ is a matrix of homogeneous forms of fixed (hence bounded) degrees.  If $b$ is sufficiently large and $e$ is sufficiently general, we may assume that every relation among the entries of $e$ has degree larger than the degrees of the entries of $M$.  Thus $Me \neq 0$ for every nonzero $M$.  
\end{proof}

\subsection{Dimension} We finish the paper by computing the dimension of $B^{\beta-\alpha} \subset M(r,\mu,\chi)$ in a simple case. 

\begin{proposition}\label{prop-dim}
Write $\mu = \alpha -1 + \frac{a}{r}$ with $0<a<r$, and suppose $\chi< \beta - \alpha$.  Assume that $\frac{a-1}{r} > \varphi-1 .$    Then $B^{\beta-\alpha}\subset M(r,\mu,\chi)$ has codimension $$\codim B^{\beta-\alpha} = r(\beta-\alpha-\chi)+c_1-1.$$ This is strictly smaller than the expected codimension $\ecodim B^{\beta-\alpha} =(\beta-\alpha)(\beta-\alpha-\chi)$ except in the case where $\alpha = 1$ and $\chi = \beta-\alpha-1$.
\end{proposition}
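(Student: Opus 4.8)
The plan is to read off $\dim B^{\beta-\alpha}$ from the explicit parameterization built in the proof of Corollary \ref{cor-irr} and then subtract it from $\dim M(r,\mu,\chi) = 1-\chi(\bv,\bv)$. Set $b = \beta-\alpha-\chi+1$ and $\mu' = \mu - \tfrac1r = \alpha-1+\tfrac{a-1}{r}$, and write $\bv = \bv' + \bw$, where $\bv'$ is the Chern character of the Steiner bundle $S$ and $\bw$ is that of $\OO_L(-b)$. The hypothesis $\tfrac{a-1}{r}>\varphi-1$ guarantees (Remark \ref{rem-stabSteiner}) that a general such $S$ is $\mu$-stable, and by Theorem \ref{thm-maxHN} every stable sheaf of invariants $\bv'$ is a twisted Steiner bundle of this shape; hence these bundles form a dense open subset of $M(r,\mu',\beta-\alpha)$, whose dimension is $1-\chi(\bv',\bv')$.

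The first real step is to show that the rational map from the parameter space of triples $(S,L,[e])$, with $[e]\in \P\Ext^1(\OO_L(-b),S)$, onto $B^{\beta-\alpha}$ is generically injective, so that $\dim B^{\beta-\alpha}$ equals the dimension of this parameter space. For a general $E\in B^{\beta-\alpha}$, Theorem \ref{thm-defAlpha} reconstructs $S$ as the image of the evaluation map, $L$ as the unique line on which $E$ fails to be globally generated, and $\OO_L(-b)$ as the cokernel; since $S$ is stable we have $\Aut S = \mathbb{C}^*$, so the extension class is determined up to scalar and $[e]$ is recovered. Combined with $\ext^1(\OO_L(-b),S) = r(\mu+b+2)-1$ from Corollary \ref{cor-irr}, this gives
\[
\dim B^{\beta-\alpha} = \bigl(1-\chi(\bv',\bv')\bigr) + 2 + \bigl(r(\mu+b+2)-2\bigr).
\]

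The codimension is then a bookkeeping exercise in the Euler pairing. Expanding $\bv = \bv'+\bw$ gives $\chi(\bv',\bv')-\chi(\bv,\bv) = -\chi(\bv',\bw)-\chi(\bw,\bv')-\chi(\bw,\bw)$, and each pairing is computed by restricting to $L\cong \P^1$ and applying Riemann--Roch: $\chi(\bw,\bw)=-1$, $\chi(S,\OO_L(-b)) = r-c_1+1-rb$, and $\chi(\OO_L(-b),S) = -c_1+1-rb-2r$. Substituting everything and using $r\mu = c_1$, the $\mu$- and $b$-dependent terms collapse and yield
\[
\codim B^{\beta-\alpha} = rb - r + c_1 - 1 = r(\beta-\alpha-\chi)+c_1-1,
\]
as claimed.

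Finally, to compare with the expected codimension, set $N = \beta-\alpha-\chi\geq 1$ and write $\ecodim - \codim = (\beta-\alpha-r)N - (c_1-1)$. Expanding $\beta$ through $\mu=\alpha-1+\tfrac ar$ produces the clean identity $(\beta-\alpha-r)-(c_1-1) = (\alpha-1)\bigl(\tfrac{r\alpha}{2}+a-1\bigr)$. Because the hypothesis forces $a\geq 2$, the right-hand side is nonnegative and vanishes exactly when $\alpha=1$; a short case split (on $\alpha\geq 2$ versus $\alpha=1$, and on $N=1$ versus $N\geq2$) then shows $\ecodim>\codim$ in all cases except $\alpha=1$ and $\chi=\beta-\alpha-1$, where equality holds. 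I expect the main obstacle to be the generic injectivity in the second paragraph: everything downstream is computation, but the dimension count is only correct if the parameterization does not collapse, which rests entirely on reconstructing $(S,L,\OO_L(-b))$ from $E$ and on the stability of $S$.
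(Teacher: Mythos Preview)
Your proposal is correct and follows essentially the same approach as the paper: parameterize $B^{\beta-\alpha}$ by triples $(S,L,[e])$, use stability of $S$ to establish generic injectivity, and compute the codimension via the Euler pairing. The only notable difference is in the final comparison with the expected codimension: the paper invokes the a priori determinantal bound $\codim \leq \ecodim$ to deduce $r<\beta-\alpha$ and then isolates the boundary case, whereas you compute $\ecodim-\codim$ directly via the identity $(\beta-\alpha-r)-(c_1-1)=(\alpha-1)\bigl(\tfrac{r\alpha}{2}+a-1\bigr)$, which is equally valid and arguably more transparent.
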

\begin{proof}  We keep the notation of the proof of Corollary \ref{cor-irr}, so any $E\in B^{\beta-\alpha}$ can be written in the form $$0\to S\to E \to \OO_L(-b)\to 0.$$  If $E$ is general, then $S$ is given by a general element of $U$, and thus $S$ is a stable twisted Steiner bundle because of the assumption $\frac{a-1}{r} > \varphi-1$.  The line $L$ is uniquely determined as the locus where $E$ fails to be globally generated.  We have $\Hom(S,\OO_L(-b)) = 0$, so $\hom (S,E)  = \hom(E,\OO_L(-b)) = 1$, and thus the exact sequence is essentially uniquely determined by $E$.  It follows that the class $e\in \Ext^1(\OO_L(-b),S)$ is uniquely determined up to scale.  Therefore \begin{align*}\dim B^{\beta-\alpha} &= \dim M(\ch(S)) +\dim \P^{2*}+ \dim \P \Ext^1(\OO_L(-b),S))\\
&= 1-\chi(S,S)+2-\chi(\OO_L(-b),S)-1\\
&= 2-\chi(E,S).\end{align*}
On the other hand $M(\ch(E))$ has dimension $1-\chi(E,E)$, so the codimension of $B^{\beta-\alpha}\subset M(\ch(E))$ is 
\begin{align*}\codim B^{\beta-\alpha} &= (1-\chi(E,E)) - (2-\chi(E,S)) \\&= \chi(E,S) - \chi(E,E)-1 \\&= -\chi(E,\OO_L(-b))-1\\
&= -\chi(E^*(-b)|_L)-1\\
&= \chi(E(b-2)|_L)-1\\
&= r(\mu+b-1)-1\\
&= r(\beta-\alpha-\chi)+c_1-1.\end{align*}

We always have $\codim B^{\beta-\alpha} \leq \ecodim B^{\beta-\alpha},$ or  $$r(\beta-\alpha-\chi)+c_1-1 \leq (\beta-\alpha)(\beta-\alpha-\chi),$$ because $B^{\beta-\alpha}$ is a determinantal variety.  Let us analyze when equality holds.  The inequality $\frac{a-1}{r} > \varphi-1$ forces $c_1\geq 2$, so the inequality $\codim B^{\beta-\alpha} \leq \ecodim B^{\beta-\alpha}$ gives  $r < \beta-\alpha$.  Since the inequality $\codim B^{\beta-\alpha} \leq \ecodim B^{\beta-\alpha}$ holds for every $\chi <\beta-\alpha$, equality can only possibly hold in case $\chi = \beta-\alpha-1$.  In this case the inequality becomes $$r+c_1-1 \leq \beta-\alpha.$$  When $\alpha=1$, both sides are equal, and it is easy to see by a direct computation that the inequality becomes strict for $\alpha \geq 2$.
\end{proof}

\bibliographystyle{plain}
  
\end{document}